\def\l@subsection{\@tocline{2}{0pt}{2.5pc}{5pc}{}}
\numberwithin{equation}{subsection} 
\numberwithin{figure}{subsection} 
\numberwithin{table}{subsection} 
\title{Plane curve singularities via divides} 
\author{Norbert A'Campo and Pablo Portilla Cuadrado} 
\subjclass[2010]{}
\date{\normalsize\today} 
\newtheorem{theorem}[equation]{Theorem}
\newtheorem{lemma}[equation]{Lemma}
\newtheorem{corollary}[equation]{Corollary}
\theoremstyle{definition}
\newtheorem{example}[equation]{Example}
\newtheorem{definition}[equation]{Definition}
\newtheorem{notation}[equation]{Notation}
\newtheorem{remark}[equation]{Remark}
\thanks{This publication has been partially funded by the grant (holded by the 
	second author)
	RYC2022-035158-I, funded by MCIN/AEI/10.13039/501100011033 and by
	the FSE+}
\begin{document}

\title{Plane curve singularities via divides}

\maketitle

\tableofcontents

\begin{abstract}	
	Generic relative  immersions of compact one-manifolds in the closed unit 
	disk, 
	i.e. divides, provide a powerful combinatorial framework, and allow a 
	topological construction of fibered classical links, for which the 
	monodromy 
	diffeomorphism is explicitly  given as a product of Dehn twists. Complex 
	isolated plane curve singularities
	provide a classical fibered link, the Milnor fibration, with
	its Milnor monodromy, monodromy group, and vanishing cycles.
	This surveys puts together much of the work done on divides and their role 
	in 
	the topology of isolated plane curve singularities.  
	We review two complementary approaches for constructing divides: one via 
	embedded 
	resolution techniques and controlled real deformations, and another via 
	Chebyshev polynomials, which yield explicit real morsifications.
	A combinatorial description of the Milnor fiber is developed, leading to an 
	explicit factorization of the geometric monodromy as a product of 
	right-handed Dehn twists. We further explore the structure of reduction 
	curves 
	that arise from the Nielsen description of quasi-finite mapping classes and 
	from iterated cabling operations on divides.
	The interplay between 
	the geometric and integral 
	homological monodromies is analyzed, with special attention to symmetries 
	induced by complex conjugation and strong invertibility phenomena. In 
	particular, the integral homological monodromy
	for isolated plane curve singularities can be computed effectively. In 
	contrast, for complex hypersurface singularities in higher dimensions no 
	method 
	of computation of the integral homology monodromy is known.
	Connections with mapping class groups, contact and symplectic geometry, and 
	Lefschetz fibrations are also discussed.	
	We conclude by outlining several open problems and conjectures related to 
	the characterization of divides among fibered links, the presentation of 
	geometric monodromy groups, and the existence of symplectic fillings 
	compatible with the natural fibration structures.
\end{abstract}

\section{Introduction}

The concept of divides (or partage in French) has become a significant tool in 
the study of isolated plane curve singularities and their associated monodromy 
groups. Introduced in the 1970s, independently by the first author 
\cite{Acampo_groupe_I} and Sabir 
Gusein-Zade \cite{GZ_dynkin_two,GZ_inter_two}, divides provided a novel 
combinatorial approach to understanding 
the topology of singularities and their deformations. Both researchers were 
deeply influenced by the broader developments in singularity theory initiated 
by John Milnor’s foundational work on the Milnor fibration 
\cite{Milnor}, which showed that isolated hypersurface 
singularities could be understood through their associated fibration structure. 
Divides emerged as a geometric tool that could encode the topological behavior 
of these singularities by associating them with fibered links in 
three-dimensional spaces.

The first author's pioneering work on divides tied tightly these objects to the 
monodromy 
of singularities. His work throughout the years drew a big picture that placed 
these objects as an useful and novel tool to understand plane curves. In 
particular, he showed that a divide associated
with a plane curve singularity encoded every topological piece of information 
associated with the plane curve singularity: the link, a model of the Milnor 
fiber, the geometric monodromy 
as a product of Dehn twists around vanishing cycles and even the reduction 
curves of the Nielsen-Thurston decomposition of the monodromy. This work also
demonstrated that divides could serve as a bridge between knot theory and the 
study of singularities, particularly by offering a combinatorial framework for 
understanding the monodromy representation of some fibered link that share a 
lot of similarities with those coming from singularity theory.

In parallel, Gusein-Zade developed an alternative approach to divides, focusing 
on the real morsifications of singularities and using Chebyshev polynomials to 
generate divides for specific classes of real plane curves. This approach 
offered a more algebraic route, 
allowing the explicit construction of divides from polynomial models. 
Gusein-Zade’s contributions enriched the theory of divides, showing that they 
could be systematically constructed for a wide variety of singularities, 
further connecting the geometric properties of divides with algebraic 
invariants like Puiseux pairs and Newton polygons.

Divides also exhibit 
strong symplectic properties, connecting them to the study of symplectic 
geometry and symplectic fillings. Research into the symplectic properties of 
divides has led to advances in the understanding of how these objects interact 
with higher-dimensional spaces, further expanding their applications.

This survey explores the modern theory of divides, focusing on their 
combinatorial and geometric properties, as well as their applications to the 
study of singularities. We discuss the role of divides in generating fibered 
links, describe their connections to the Milnor fiber and geometric monodromy, 
and highlight recent advances in the symplectic and topological study of these 
objects. Additionally, we state and discuss some open questions, such as 
the broader symplectic 
properties of divides. 

Through this work, we aim to provide a comprehensive 
overview of the current state of the theory of divides, demonstrating its 
central role in singularity theory, knot theory, and beyond. Much of the 
content comes directly from the classical work of the first author. Some proofs 
have been revisited or expanded with some details added and new detailed 
examples are considered. The order in which the material is told, does not 
necessarily respect the chronological order in which it appeared but rather an 
expository one.

\subsection*{Organization of the paper}

In Sect.~\ref{s:preliminary_theory} We fix notation and introduce some basic 
concepts about the theory of mapping class groups and the theory of plane 
cruves. Among others we introduce the Nielsen-Thurston decomposition and the 
important concepts of geometric vanishing cycle and geometric monodromy group 
associated with an isolated plane curve singularity.

In Sect.~\ref{s:definitions} we give the abstract definition of divide and we 
associate to it a link in the three-sphere. Furthermore, in 
Lemma~\ref{lem:existence_adpated_function} we prove the existence 
of adapted functions (Morse functions that define the divide) for each divide.

In Sect.~\ref{s:fibration_thm} we prove a general fibration theorem 
(Theorem~\ref{thm:fibered_link}): the link that we associated to each divide is 
a 
fibered link. The fibration is given by a very explicit map which can be 
constructed from an adapted function.

In Sect.~\ref{s:divides_plane_curves} we start the study of divides coming from 
plane curve singularities. In particular we introduce the concept of totally 
real plane curve singularity and we explain two methods to produce divides for 
totally real plane curves. The first method, presented in 
Sect.~\ref{ss:divides_embedded}, is due 
to the first author and relies on choosing a resolution and then perform a 
series of controlled perturbations of the strict transform and contractions of 
exceptional divisors with self intersection $-1$ alternately. The second 
method, explained in Sect.~\ref{ss:divide_chebyshev} is due to Gusein-Zade and 
it 
relies on the fact that Chebyshev polynomials can be used to parametrize the 
$0$ set of Brieskorn-Pham polynomials. This fact together with an iterative 
technique, allows one to produce divides for any topological type of plane 
curve realized by a particular real model.

In Sect.~\ref{s:description_Milnor} we describe combinatorially the Milnor 
fiber of 
the function constructed in Sect.~\ref{s:fibration_thm}. This combinatorial 
description is crucial in the description of the monodromy given in 
Sect.~\ref{s:description_monodromy}. In the first subsection we describe the 
monodromy as a product of right handed Dehn twists around the geometric 
vanishing cycles of a distinguished basis. In the second subsection we deal 
with special properties of the geometric monodromy associated with a plane 
curve singularity, more concretely we explore the property of being strongly 
invertible and we discuss the state of the art in higher dimensions.

In Sect.~\ref{s:abstract}, we explain how one can, from a divide, to easily 
produce 
an explicit model for the Milnor fiber together with a set of geometric 
vanishing cycles associated with a distinguished basis. Furthermore, if the 
divide is produced by the previously introduced cabling technique, we explain 
how to visualize in this model, the reduction curves of the geometric monodromy 
associated with the divide.

Finally, in Sect.~\ref{s:other_questions} we state a few open questions related 
to 
divides associated with plane curves together with some other properties that 
are not fully explored in this work.

\section{Preliminary theory on mapping class groups and plane curves}
\label{s:preliminary_theory}

We turn now our attention to the theory of mapping class groups. The purpose of 
this section it to collect the results on mapping class group that we use 
throughout the rest of the text as well as  to fix notations and conventions. 
For more insight on this topic we refer to the book by B. Farb and D. Margalit 
\cite{Farb}.

\subsection{Mapping class groups}
\index{mapping class group}
Let $F$ be a compact oriented connected surface with boundary.

\begin{definition}\label{def:mapping_class_group}
	The {\em mapping class 
		group} of $F$ is defined as
	\[
	\mathrm{Mod}(F) := \pi_0(\mathrm{Diff}^+(F, \partial F)).
	\]
	That is, elements of $\mathrm{Mod}(F)$ are isotopy classes of oriented 
	diffeomorphisms of $F$ 
	that fix the boundary point-wise. Each of these classes is called a {\em 
		mapping class}.
\end{definition}

Relative diffeomorphisms $\alpha,\beta$ of a surface 
$(F,\partial F)$  are in fact in the same mapping class if relatively homotopic.

Next we define the basic elements that form the mapping class group: the Dehn 
twists.

\begin{notation}\label{not:preliminaries}	
	Given a simple closed curve $\gamma \subset \Sigma$ disjoint from the 
	boundary on an oriented surface we 
	denote by $$T_\gamma: 
	\Sigma \to \Sigma$$ a {\bf right-handed Dehn twist} around $\gamma$ or its 
	mapping class in the mapping class group of $\Sigma$ relative to its 
	boundary. 
	The support of $T_\gamma$ is concentrated, by definition, in a 
	tubular neighborhood of $\gamma$. Also, the mapping class of $T_\gamma$ 
	only depends on the isotopy class of the simple closed curve $\gamma$. 
	\index{Dehn twist}
	\begin{figure}[h]
		\centering \includegraphics*[scale=0.6]{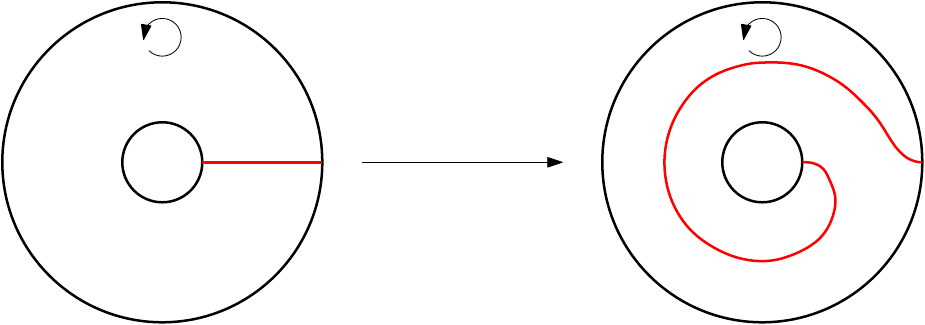}
		\caption{A right-handed Dehn twist acting on a properly embedded 
			segment. The orientation of the annulus is indicated by the 
			curved arrows.}
		\label{fig:r_dehn}
	\end{figure}
	
\end{notation}

Now we turn to our attention to another construction, very much related to the 
one above, that will turn out to be very useful later on this text. We 
introduce the notions: minimal positive pair of Dehn twists, half twist, right 
half Dehn twist and left half Dehn twist.

\begin{definition}\label{def:half_twist}
	Let $F$ be an oriented surface and let $z:S^1 \to F$  be a simple closed 
	parametrized 
	curve on $F$.
	Let $F'$ be the surface obtained from the surface $F$ by cutting $F$ along 
	the image $Z$ of
	$z$ and by gluing back with the diffeomorphism that identifies images of 
	opposite 
	points of $S^1$; denote by $Z'\subset F'$ the image of $Z$; we call this 
	diffeomorphism a {\em half twist}. The 
	surfaces 
	$F$ and $F'$ are of course diffeomorphic and $F\setminus Z$ and 
	$F'\setminus Z'$ are equal as sets. A {\em minimal positive pair 
		of Dehn twists} from $F$ to $F'$ is a pair of 
	diffeomorphisms $(p,q)$ from $F$ to $F'$ such that the following holds:
	
	\begin{enumerate}[label=(\alph*)]
		\item The composition $q^{-1} \circ p:F \to F$ is a right Dehn twist 
		with respect to the orientation of $F$ having 
		the curve $Z$ as core. In addition $p(Z)=q(Z)=Z'$ holds
		
		\item There exists a regular collar neighborhood $N$ of $Z$ in $F$ 
		such that  both $p$ and $q$ coincide with the identity of $F\setminus 
		Z=F'\setminus Z'$
		outside $N$.
		
		\item For some volume form $\omega$ on $N,$ which we think of as a 
		symplectic structure,  we have 
		$p^*\omega=q^*(\omega)=\omega$, and the sum of the Hofer  distances 
		(\cite[Chapter 5.]{Hof_Zeh})
		to the identity of the restrictions 
		of $p$ and $q$ to  $N \setminus z$ is minimal. 
	\end{enumerate}
	Minimal positive pairs of Dehn twists exist and are well defined up to 
	isotopy. \index{Dehn twist!half}
	For a minimal positive pair $(p,q)$ of 
	Dehn twists, the member $p$ is called {\em positive or 
		right half Dehn twist} and the member $q$ is called {\em negative or 
		left half 
		Dehn twist}.  
\end{definition}

\subsubsection*{Nielsen-Thurston decomposition}
\index{Nielsen-Thurston decomposition}
The following decomposition result is a landmark in mapping class group theory.

\begin{theorem}[See \cite{Thu} and Corollary 13.3 from \cite{Farb} 
	]\label{thm:nt_classification}
	Let $\phi: \Sigma \to \Sigma$ be an orientation preserving homeomorphism 
	that 
	restricts to the identity on $\partial \Sigma$. Then there exists $\phi'$ 
	isotopic to $\phi$ and a collection $\mathcal{C}$ of non-null-homotopic 
	disjoint 
	simple closed curves (called \index{reduction curves} {\em reduction 
		curves}) including all boundary components such that:
	\begin{enumerate}
		\item \label{it:invariant} The collection of curves is invariant by 
		$\phi'$, i.e. 
		$\phi'(\mathcal{C})= \mathcal{C}$.
		\item \label{it:per_ano}The homeomorphism $\phi'$ restricted to the 
		union of
		components 
		of each $\phi'$-orbit of connected components of
		$\Sigma \setminus \mathcal{C}$  is isotopic
		either to a periodic or to a pseudo-Anosov homeomorphism.
	\end{enumerate}
\end{theorem}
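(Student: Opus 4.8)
\medskip
\noindent\textbf{Proof proposal.}\quad
The plan is to follow Thurston's original strategy via the action of $\mathrm{Mod}(\Sigma)$ on Teichm\"uller space and its boundary of projective measured foliations: this produces at once the trichotomy periodic/reducible/pseudo-Anosov, and a bookkeeping step then refines it into the canonical invariant curve system in the statement. As a preliminary normalization I would replace each component of $\partial\Sigma$ by a puncture and work with the Teichm\"uller space $\mathcal{T}(\Sigma)$ of complete finite-area hyperbolic structures on the resulting punctured surface, together with isotopy classes of homeomorphisms permuting the punctures; the interior reduction curves and complementary subsurfaces obtained below transfer back to $\Sigma$, and $\partial\Sigma$ is adjoined to $\mathcal{C}$ directly.

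First I would set up the Thurston compactification. Let $\mathcal{S}$ be the set of isotopy classes of essential simple closed curves on $\Sigma$; projectivizing the map that sends a hyperbolic structure $X$ to its vector of geodesic lengths $(\ell_X(\gamma))_{\gamma\in\mathcal{S}}$ embeds $\mathcal{T}(\Sigma)$ into $\mathbb{P}(\mathbb{R}_{\geq 0}^{\mathcal{S}})$. Using measured foliations and train-track coordinates one then shows that the closure $\overline{\mathcal{T}(\Sigma)}$ is a closed ball whose boundary sphere is the space $\mathcal{PMF}(\Sigma)$ of projective measured foliations, with the geometric intersection pairing $i(\cdot\,,\cdot)$ extending the length functions continuously, and that $\mathrm{Mod}(\Sigma)$ acts continuously on $\overline{\mathcal{T}(\Sigma)}$.

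Since $\phi$ acts on the closed ball $\overline{\mathcal{T}(\Sigma)}$, Brouwer's fixed point theorem supplies a fixed point $p$, and I would split into cases. If $p\in\mathcal{T}(\Sigma)$, then $\phi$ preserves a hyperbolic structure, hence is isotopic to an isometry of it; as the isometry group of a finite-area hyperbolic surface is finite, $\phi$ is periodic and one takes $\mathcal{C}=\partial\Sigma$. If $p=[\mathcal{F}]\in\mathcal{PMF}(\Sigma)$, then $\phi_*\mathcal{F}=\lambda\mathcal{F}$ for some $\lambda>0$; when $\mathcal{F}$ fills $\Sigma$ and $\lambda\neq 1$ --- say $\lambda>1$, after replacing $\phi$ by $\phi^{-1}$ if needed --- the same argument applied to $\phi^{-1}$ yields a filling foliation $\mathcal{G}$ with $\phi_*\mathcal{G}=\lambda^{-1}\mathcal{G}$, one checks $i(\mathcal{F},\mathcal{G})>0$, and the transverse pair $(\mathcal{F},\mathcal{G})$ equips $\Sigma$ with a pseudo-Anosov representative of $\phi$ of stretch factor $\lambda$, with again $\mathcal{C}=\partial\Sigma$. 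Otherwise $\mathcal{F}$ is non-filling or $\lambda=1$, and then the isotopy classes of curves $c$ with $i(c,\mathcal{F})=0$ are nonempty and carried by a proper essential subsurface whose isotopy class is fixed by $\phi$; the frontier curves of that subsurface form a $\phi$-invariant multicurve, so $\phi$ is reducible.

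To upgrade this to the statement I would follow Birman--Lubotzky--McCarthy and define the canonical reduction system $\sigma(\phi)$ intrinsically: $[c]\in\sigma(\phi)$ precisely when $[c]$ is fixed by some power of $\phi$ and $i(c,d)=0$ for every $[d]$ fixed by some power of $\phi$. One shows $\sigma(\phi)$ is finite, realizable by pairwise disjoint simple closed curves and invariant under $\phi$; putting $\mathcal{C}=\sigma(\phi)\cup\partial\Sigma$ and choosing a representative $\phi'$ of $\phi$ preserving $\mathcal{C}$ gives conclusion~\ref{it:invariant}, and for each $\phi'$-orbit of components of $\Sigma\setminus\mathcal{C}$ the associated first-return map has empty canonical reduction system by maximality of $\sigma(\phi)$, so the fixed-point analysis above applied to that subsurface forces it to be periodic or pseudo-Anosov, which is conclusion~\ref{it:per_ano}. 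The main obstacle, absorbing most of the work, is the construction of the Thurston compactification --- showing the length embedding is proper with ball-shaped closure of dimension $6g-6+2n$ and that the intersection pairing extends continuously --- which requires developing the theory of measured foliations and train-track charts in full; a secondary technical point is establishing the source--sink dynamics of a pseudo-Anosov class on $\mathcal{PMF}(\Sigma)$, which both makes the boundary dichotomy clean and verifies that $(\mathcal{F},\mathcal{G})$ binds $\Sigma$.
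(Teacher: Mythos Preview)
The paper does not give its own proof of this theorem: it is stated in the preliminaries with the citation ``See \cite{Thu} and Corollary 13.3 from \cite{Farb}'' and is used as a black box thereafter. So there is no in-paper argument to compare against.

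Your outline is the standard Thurston route and is essentially what one finds in the cited references: compactify Teichm\"uller space by projective measured foliations, apply Brouwer to get a fixed point, split into the interior case (periodic), the filling boundary case with $\lambda\neq 1$ (pseudo-Anosov), and the remaining boundary case (reducible), then invoke the Birman--Lubotzky--McCarthy canonical reduction system to organize the reducible case into the invariant multicurve $\mathcal{C}$ with the dichotomy on complementary pieces. Your identification of the heavy step --- constructing the Thurston compactification and extending intersection numbers continuously --- is accurate, and your handling of the boundary normalization (replace boundary components by punctures, then append $\partial\Sigma$ to $\mathcal{C}$ at the end) is the usual manoeuvre. One small point worth tightening: in the boundary case with $\lambda=1$ and $\mathcal{F}$ filling, one does not immediately get a reducing curve from $\{c:i(c,\mathcal{F})=0\}$, which is empty; the standard fix is either to show that a filling fixed foliation with $\lambda=1$ forces finite order (via the Bers proof, minimizing translation length on $\mathcal{T}(\Sigma)$), or to pass directly to the Bers formulation, which handles all three cases uniformly without this subcase analysis.
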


\begin{definition} \label{def:reduction_system}
	Any collection of curves $\mathcal{C}$ satisfying \ref{it:invariant} above 
	is 
	called 
	a 
	{\em reduction system}. That is, a collection of simple closed curves that 
	is 
	invariant (up to isotopy) by $\phi$ is a reduction system. 
\end{definition}

The decomposition given by Theorem~\ref{thm:nt_classification} is called {\em 
	Nielsen-Thurston decomposition} and is unique up to isotopy if the 
reduction 
system of  
curves $\mathcal{C}$ is minimal by inclusion. We  assume whenever necessary 
that 
the 
representatives of the mapping classes used in this work satisfy 
\ref{it:invariant} and 
\ref{it:per_ano} from the previous 
theorem. This decomposition leads to the following definition.

\begin{definition}\label{def:nt_decomposition}
	Let $\phi: \Sigma \to \Sigma$ be a representative of a mapping class, we 
	say that 
	$\phi$ is {\bf pseudo-periodic} if only 
	periodic pieces appear in its Nielsen-Thurston decomposition.
\end{definition}

Pseudo-periodic homeomorphisms are of special importance in complex singularity 
theory, as all geometric monodromies of  holomorphic germs of functions 
on isolated complex surface singularities are of this kind. In particular, 
monodromies of isolated plane curve singularities are of this type. In general, 
local geometric monodromies of complex hypersurface singularities are 
dynamically restricted: the geometric monodromy can be realized by a distal 
map, which topological entropy vanishes. This  shows that in the case of curve 
singularities the reduction of the geometric monodromy does not have 
pseudo-Anosov components. See also \cite{Acampo_fonction}. 

\subsection{Isolated plane curve singularities}

In this section we review some basic notions on plane curve singularities that 
are mentioned and used throughout this text.

The algebra of convergent power series $\mathbb{C}\{x,y\}$ is a unique 
factorization 
domain and so, up to multiplication by a unit, a series $f$ defining an 
isolated plane curve singularity, can be uniquely expressed as 
$f=f_1\cdot \cdots \cdot f_r$ where each $f_i$ is an irreducible convergent 
power series. Each $f_i$ is called a {\em branch} of $f$ and if $r=1$ we say 
that $f$ is {\em irreducible}.

\subsubsection*{The \index{Milnor fibration} Milnor 
	fibration}\label{ss:milnor_fibration} In 
\cite{Milnor}, Milnor proved that for 
a 
holomorphic 
map $f:\mathbb{C}^{n+1} \to \mathbb{C}$ with an isolated singularity at the 
origin, 
$$\frac{f}{|f|}: S^{2n+1}_\epsilon \setminus K \to S^1$$ is a locally trivial 
fibration. Here $S_\epsilon^{2n+1} \subset \mathbb{C}^{n+1}$ denotes a suitably 
small 
sphere and the {\em link} $K$ is defined by $K:= f^{-1}(0) \cap  
S_\epsilon^{2n+1}$. If we denote by $D_\delta \subset \mathbb{C}$ a small disk 
of 
radius $\delta$ centered at $0$ and by $B_\epsilon \subset \mathbb{C}^n$ a ball 
of 
radius $\epsilon$, it  follows from Ehresmann's fibration lemma that
\begin{equation}\label{eq:milnor_fibration}
	f_{|_{f^{-1}(\partial D_\delta) \cap B_\epsilon}}: f^{-1}(\partial 
	D_\delta) \cap B_\epsilon \to \partial D_\delta
\end{equation} is also a locally trivial fibration for $\epsilon$ small enough 
and $\delta$ small with respect to $\epsilon$. In this case, we say that 
$B_\epsilon$ is a {\em Milnor ball} and that $\epsilon$ is a Milnor radius. 
Milnor proved in 
\cite{Milnor} that these two fibrations are essentially equivalent. 

The fibers of second fibration are connected compact $2 n$-manifolds with 
non-empty boundary $(F,\partial{F})$, and the fibers of first fibration are 
diffeomorphic to the interior $F\setminus \partial{F}$ of the fibers of second 
fibration which carry a complex structure.

Any fiber $F$ of these fibrations is called
{\em the Milnor fiber} of $f$.  When $n = 1$, the second is a connected 
oriented 
compact surface with 
non-empty boundary. The \index{Milnor fiber} Milnor fiber has $r$ boundary 
components, where $r$ is 
the number of branches of $f$. Its first Betti number $b_1(F)$ coincides 
with 
\[
\dim_\mathbb{C} \frac{\mathbb{C}\{x,y\}}{(\partial f/ \partial x, \partial f/ 
	\partial y)}
\]
and 
any of these quantities is called {\em the Milnor number of } $f$ and denoted 
by $\mu_f$ or simply by $\mu$ if there is no ambiguity. The topological 
information of a plane curve singularity is carried by the $r$-component 
oriented classical link $\partial F \subset \partial B_\epsilon$ in its 
oriented ambient $3$-sphere.

\subsubsection*{Puiseux pairs and intersection multiplicities} 
\label{ss:puiseux_pairs}
There are a lot 
of different ways of codifying by numerical invariants the topological 
information of a plane curve 
singularity. 
We cite \cite{Bri}, \cite{Wall} or \cite{Eis} as standard references on this 
topic. 

Next, we briefly recall what \index{Puiseux pairs} Puiseux 
pairs are, since they appear several times on this work. In particular, Puiseux 
pairs codify topologically uni-branch plane curve singularities. We give an 
axiomatic 
approach to this theory.

\begin{definition}
	A finite sequence $Pp$ of pairs of integers $(a_1,b_1),\ldots,(a_k,b_k)$ is 
	a 
	sequence of {\em essential Puiseux pairs} if and only if 
	\[
	\begin{split}
		& 2 \leq a_i < b_i,  \\
		& b_i/(a_1a_2\cdots a_ i) < b_{i+1}/(a_1a_2\cdots a_{i+1}) \text{ and 
		}\\
		&\gcd(b_i, a_1\cdots a_i)= 1
	\end{split}
	\]
	for all $i =1, \ldots,k$.
\end{definition}

A sequence of essential Puiseux pairs $Pp$ defines a family
of topologically equivalent singularities. A specific member $f_{Pp}(x,y)$
of this family is
obtained from a  Puiseux expansion with fractional and strictly increasing
exponents
$$
y=x^{b_1/a_1}+x^{b_2/a_1a_2}+ \dots +x^{b_k/a_1a_2 \cdots a_k}
$$
by the rule, which takes into account the ramification of $x^{1/a_1a_2\cdots 
	a_k}$,
$$
f_{Pp}(x,y)=\prod_{\theta}(y-\theta^{a_2 \cdots a_kb_1}-\theta^{a_3 \cdots 
	a_kb_2}- 
\dots -\theta^{b_k}),
$$
\noindent
where $\theta$ runs 
over the 
$a_1a_2 \cdots a_k$ roots 
of $z^{a_1a_2 \dots a_k}-x=0$ in the algebraic
closure of the field $\mathbb{C}((x))$. The 
coefficients of the polynomial $f_{Pp}(x,y)$ are integers. 

For example,
the Puiseux 
expansion $y=x^{3/2}+x^{7/4}$ with strictly increasing exponents for 
$Pp=((2,3),(2,7))$
leads to the polynomial $f_{(2,3),(2,7)}=(y^2-x^3)^2-4x^5y-x^7$ and
the Puiseux expansion $y=x^{3/2}+x^{11/6}$ to the polynomial
$f_{(2,3),(3,11)}=(y^2-x^3)^3-6x^7y^2-2x^{10}-x^{11}$.

There is a second equivalent set of numerical invariants that are more suitable 
for 
certain matters: {\em Newton pairs}. These are a finite sequence $Np$ of 
coprime 
integers 
$(a_i, \lambda_i)$ that can be computed from the Puiseux pairs by the recursive 
formula 
\begin{equation}\label{eq:newton_pairs}
	\begin{split}
		& \lambda_1:=b_1,  \\
		& \lambda_{i+1}:= b_{i+1} -b_i a_{i+1} + \lambda_i a_{i+1} a_ i .
	\end{split}
\end{equation}

Instead of using the above Puiseux expansion one uses for  given Newton pairs 
$Np$ Newton's Ansatz
$$
y=x^{\lambda_1/a_1}(1+x^{\lambda_2/{a_1a_2}}(1+x^{\lambda_3/{a_1a_2a_3}}(\ldots)))$$
which expands to the corresponding Puiseux expansion.

Now suppose that $f= f_1\cdots f_r$ is the factorization of a reducible 
isolated singularity $f$ in branches $f_i$. In this case, 
Puiseux or Newton pairs of each of the branches are not enough to determine the 
topology of the singularity. One need one extra piece of information,  for 
example {\em intersection multiplicities}. For 
each pair $i,j \in \{1,\ldots,r\}, i \neq j$, we define the {\em intersection 
	multiplicity} between the branches $f_i$ and $f_j$ as 
$$\dim_{\mathbb{C}}\mathbb{C} 
\{x,y\}/(f_i,f_j)$$
which is in fact the linking number of the oriented knots corresponding to 
these branches. 
For an interpretation of the numbers $\lambda_i$ of 
eq.~\ref{eq:newton_pairs} in terms of intersection numbers, see 
Sect.~\ref{ss:divide_chebyshev}.

For instance, Puiseux pairs and Newton pairs can be truncated: denote by $Pp_j$ 
or $Np_j, 0\leq j \leq r,$ the initial $j$ entries of given equivalent $Pp,Np$ 
and let $f_{Pp_j}(x,y)$ define a sequence of plain curve singularities. Here, 
$Pp_0,Np_0$ will be the empty set of \index{Puiseux pairs} Puiseux pairs, 
Newton pairs corresponding 
to  $f_\emptyset(x,y)=y$. The equations $f_{Pp_j}(x,y)=0, j=0,\ldots ,r,$ 
define oriented knots  $K_j\subset S^3_\epsilon$. The Newton pairs get a 
topological interpretation: The knot $K_{j}$ is the $(a_j,\lambda_j)$ cable 
knot on the knot $K_{j-1}, j=1,2,\ldots ,r$. See the book by David Eisenbud and 
Walter Neumann \cite{Eis} for references 
and historical remarks.

\subsection{The versal deformation space and the geometric monodromy 
	group}\label{subsection:versal}
\ 
\index{versal unfolding}
\subsubsection*{The versal unfolding} We briefly recall here the notion 
of the 
{\em versal unfolding} of an isolated singularity; see \cite[Chapter 
3]{ArnII} for more details. Recall the algebra 
\[
A_f = \frac{\mathbb{C}\{x,y\}}{(\partial f/ \partial x, \partial f/\partial y)};
\]
Let 
\[
g_1, \dots, g_\mu \in \mathbb{C}[x,y]
\]
be polynomials that project to a basis of $A_f$, assume (we can always do so) 
that $g_1=1$. For $\lambda = 
(\lambda_1, \dots, \lambda_\mu) \in \mathbb{C}^\mu$, define the function 
$f_\lambda$ by
\[
f_\lambda = f + \sum_{i = 1}^\mu \lambda_i g_i.
\]
The {\em base space of the versal unfolding } of $f$ is the parameter space of 
all $\lambda$ which is naturally isomorphic to $\mathbb{C}^\mu$. The {\em 
	discriminant 
	locus} is the 
subset
\[
\mbox{Disc}  = \{\lambda \in \mathbb{C}^\mu \mid f_\lambda^{-1}(0) \mbox{ is 
	not 
	smooth}\}.
\]
It can be shown that $\mbox{Disc}$ is an algebraic hypersurface. The 
discriminant locus admits different stratifications; for example, it is 
stratified according to the sum of  the Milnor numbers of the singular points 
of the singular fibers lying above. In this sense, the top-dimensional stratum 
(that is, 
the smooth part of $\mbox{Disc}$)
parameterizes curves with a single node.  Denote by $V_f$ a small closed ball 
in $\mathbb{C}^\mu$ centered at the origin.  Define
\begin{equation}\label{eq:taut}
	X_f = \{(\lambda, (x,y)) \mid (x,y) \in f_\lambda^{-1}(0),\ \lambda \not 
	\in \mbox{Disc}\}.
\end{equation}
Then, for $V_f$ small enough and after intersecting $X_f$  with a 
sufficiently small closed polydisk, this family has the structure of a smooth 
surface bundle with base $V_f \setminus \mbox{Disc}$ and fibers diffeomorphic 
to the Milnor fiber $F$ of eq.~\ref{eq:milnor_fibration}.
We fix a point in $V_f \setminus \mbox{Disc}$ and we denote, also by $F$, 
the fiber with boundary lying over it.

\subsubsection*{The geometric monodromy group} 
\index{geometric monodromy}
\index{geometric monodromy!group}
Recall that  a mapping class  (Definition~\ref{def:mapping_class_group}) is an 
isotopy 
class of orientation preserving 
diffeomorphisms of $F$ that restricts to the identity on the boundary, where 
the isotopies are required to fix the boundary {\em point-wise}. Let 
$f:\mathbb{C}^2 
\to \mathbb{C}$ define an isolated singularity at the origin, with 
\index{Milnor fiber} Milnor fiber 
$F$. 

\begin{definition}\label{def:geom_monodromy_group}
	The {\em geometric monodromy group} is the image in $\mathrm{Mod}(F)$ of 
	the 
	monodromy representation
	\[
	\rho: V_f \setminus \mbox{Disc} \to \mathrm{Mod}(F)
	\] of the universal family $X_f$ of eq.~\ref{eq:taut}.
\end{definition}

\begin{definition}\label{def:vanishing_cycle}
	A \index{geometric vanishing cycle} {\em quadratic vanishing cycle} or {\em 
		geometric vanishing cycle} or, for the purposes of this work, 
	simply a {\em 
		vanishing cycle}  is a 
	simple closed curve $c \subset F$ that 
	gets contracted to a point when transported to the nodal curve lying over a 
	smooth point of the discriminant $\mbox{Disc}$.
\end{definition}

Let $B_\epsilon$ be a Milnor ball for an isolated plane curve singularity $f$. 
Then, for any linear form $\ell:\mathbb{C}^2 \to \mathbb{C}$ generic with 
respect to $f$ and 
$\eta>0$ small enough, the map $$\tilde{f}:=f+\eta \ell: B_\epsilon
\to 
\mathbb{C}$$ 
only has Morse-type singularities and the corresponding critical values $c_1, 
\ldots, c_\mu$ are all distinct and close to $0\in \mathbb{C}$. The holomorphic 
map 
$\tilde{f}$ is usually called a {\em morsification} of $f$.

\begin{remark}\label{rem:geom_invariant}
	Two isolated plane curve 
	singularities with the 
	same topological type can be connected by a $1$-parameter $\mu$-constant 
	family 
	and thus, they have the same geometric monodromy group. This last statement 
	follows from the following: in \cite[Theorem 
	3]{GZ_dynkin_two} it is proven that two plane curve singularities with the 
	same 
	topological type can be joined by a $\mu$-constant family or, equivalently, 
	by   
	family of singularities with the same 
	topological type. As a consequence, the intersection matrices (of 
	distinguished 
	basis) of 
	these singularities coincide. Also, by a result of Hamm and L{\^e} 
	\cite{Hamm_Le}, the geometric monodromy group o a plane curve singularity 
	can 
	be computed from any morsification. As a consequence of this discussion, 
	the geometric monodromy group (Definition~\ref{def:geom_monodromy_group}) 
	of an 
	isolated plane curve singularity is a topological invariant.
\end{remark}

\section{Definition of divide and some combinatorial properties}
\label{s:definitions}

In this section we give the first definitions and properties of 
divides. In particular, we prove the existence of {\em adapted functions} to 
divides (Lemma~\ref{lem:existence_adpated_function}) which are \index{Morse 
	function} Morse functions
with 
very particular properties.

\subsection{Definition of divide}
We start with the definition of the central object of this work.

\begin{definition}\label{def:divide}
	A {\em \index{divide} divide} is a generic relative immersion $\alpha:J \to 
	D$ of 
	a 
	compact $1$-dimensional 
	manifold $J$ into a disk $D$. Usually $D$ is a disk in the Euclidean plane 
	$\mathbb{R}^2$ or the Gaussian plane $\mathbb{C}$.
\end{definition}

\begin{notation}
	A divide is usually identified with the image of the immersion and denoted 
	by 
	$P$, that is, with the notation of the above definition $P = \alpha(J)$.
\end{notation}

\begin{remark}
	\begin{itemize}
		
		\item {\em relative} means that $\alpha(\partial J) \subset \partial D$ 
		and 
		$\alpha^{-1}(\partial D)=\partial J$. 
		
		\item {\em generic} means that the restriction of $\alpha$ to $\partial 
		J$ is 
		injective, that $\alpha(J)$ is transverse to $\partial D$, and that the 
		curve 
		$\alpha(J)$
		has only ordinary double points in the interior of $D$
		(this leads to the following Definition~\ref{def:pre_divide})
		
		\item When not important, the radius of the disk will not be specified. 
		If we 
		need to specify the radius of the disk, we will write $D_\rho$ for a 
		disk of radius $\rho$ centered at the origin. 
		
		\item	In certain parts of this survey, we only consider divides which 
		are 
		immersions of disjoint union of segments and, in other parts, we also 
		consider divides which are immersions of disjoint union of segments and 
		circles. It will be pointed out when necessary. If no clarification is 
		made, we assume the second and more general version.
		
	\end{itemize} 
\end{remark}

It will be very useful to deal with a more {\em relaxed} version of the notion 
of divide which is one where we allow ordinary singularities other than double 
points.

\begin{definition}\label{def:pre_divide}
	\index{pre-divide}
	A {\em pre-divide}  is a relative immersion $\alpha:J \to D$ that is 
	generic near $\partial D$ as above and such that for all distinct $p,q \in 
	J$ with
	$\alpha(p)=\alpha(q)$ the images of the tangent spaces 
	\[
	(D \alpha)_p(T_pJ) 
	\neq (D \alpha)_q(T_qJ)
	\] 
	in $T_{\alpha(p)}D$. 
\end{definition}
Note that the only difference between a divide and a pre-divide is that in a 
pre-divide we admit singularities which are multiple crossing points (see 
Fig.~\ref{fig:predivide_divide}).

\begin{figure}
	\centering \includegraphics*[scale=0.6]{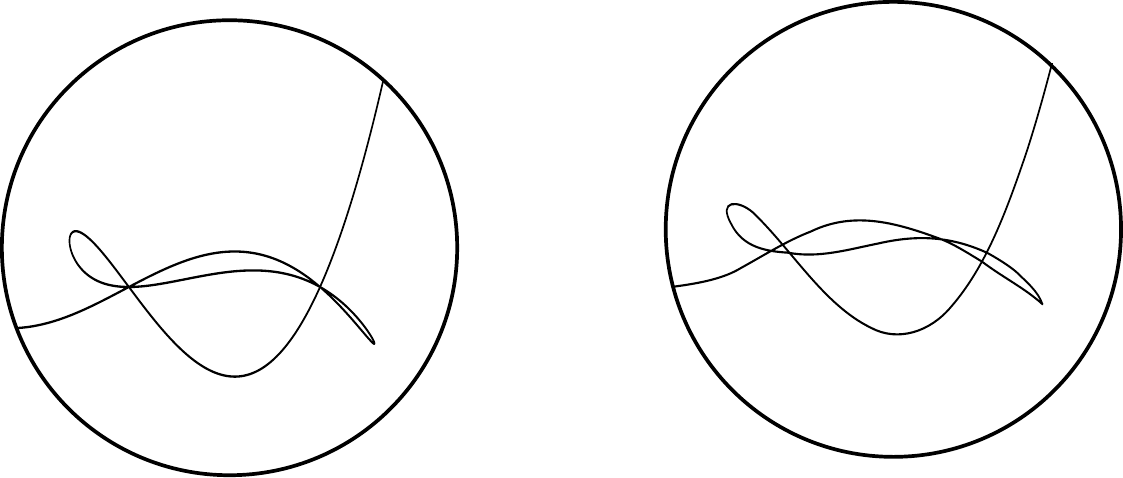}
	\caption{On the left we see a divide where there are two ordinary 
		singularities of order $3$. On the right we see a divide that is obtain 
		from the first one after a small perturbation.}
	\label{fig:predivide_divide}
\end{figure}

We need to prepare the construction of a classical link $L(P)$ in the 
$3$-sphere from a divide. Let $D=D_\rho$ be a disk (for convenience centered at 
the origin) of radius $\rho$ in the Euclidean plane $\mathbb{R}^2$ with norm 
$||x||^2=|x_1|^2+|x_2|^2$. We equip the tangent space 
$T\mathbb{R}^2=\mathbb{R}^2\times\mathbb{R}^2$ with the Euclidean norm
$||(x,u)||^2=|x_1|^2+|x_2|^2+|u_1|^2+|u_2|^2$, where we denote the tangent 
vector $u\in T_x\mathbb{R}^2$ by $(x,u)$. The $4$-ball $B^4_D$ of radius $\rho$ 
over $D$ is the subset in $TD$ defined by 
$B^4_D=\{(x,u)\in 
TD \mid ||(x,u)||\leq \rho \}$. The $3$-sphere $S_D^3$ that will contain the 
link $L(P)$ is the boundary $\partial{B^4_D}$. Observe that 
$\partial{D_\rho}\times\{0\}$ is identified with the set of tangent vectors  
$\{(x,0)\in TD_\rho \mid ||x||=\rho\}$. Let $P$ be a divide or pre-divide in 
$D$. We denote by $T_DP$ the set of all vectors $(x,u)\in TD$ with $x\in P, 
u\not= 0,$ or
$x\in P\cap \partial{D}$ that are tangent to one of the branches of 
$P$.\newline\indent 
In view of the study of complex \index{plane curve singularity} plane curve 
singularities, we think 
$T\mathbb{R}^2$ as the complex plane $\mathbb{C}^2$ via the identification 
$$(x,u)\in T\mathbb{R}^2=\mathbb{R}^2\times\mathbb{R}^2 \mapsto 
(x_1+u_1i,x_2+u_2i)\in \mathbb{C}^2$$ 
and choose to orient $T\mathbb{R}^2$ by the ``complex'' orientation given by 
the 
frame $(\frac{\partial}{\partial x_1} ,\frac{\partial}{\partial u_1},
\frac{\partial}{\partial x_2} ,\frac{\partial}{\partial u_2})$. 
As a consequence, the $4$-ball $B^4_D$ and its boundary $S^3_D$ inherit 
orientations. Also  the smooth part of $T_DP$ becomes an oriented surface.

With this setting we are ready to define the link associated to a given divide. 

\begin{definition}\label{def:link_divide}
	Given a divide $P \subset D$ we define the {\em associated link} $L(P)$ as
	\begin{equation}\label{eq:link_divide}
		L(P) := \{(x,u) \in T_DP \, | \, \|(x,u)\| = \rho\}=T_DP\cap S^3_D  
		\subset S^3_D.
	\end{equation}
\end{definition}
We will later see that not all 
links come from divides. We will also see that links that come from divides are 
fibered and that, even not all fibered links come from divides. For example, in 
\ref{rem:eight_knot_divide}, it is explained why  the figure eight knot 
(Fig.~\ref{fig:figure_eight}) cannot appear as the knot associated to a divide.

\begin{figure}
	\centering \includegraphics*[scale=0.6]{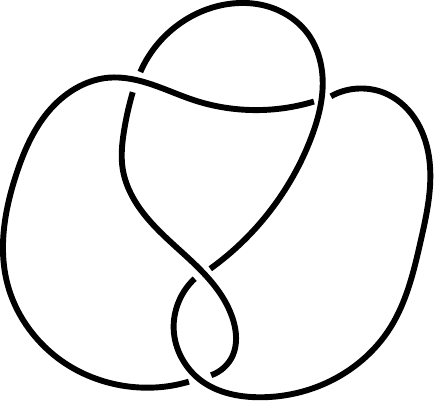}
	\caption{The figure eight knot.}
	\label{fig:figure_eight}
\end{figure}
The projection 
$TD \to D$ restricts to a map $S^3_D\to D$ having a circle as preimage of non 
boundary points of $D$. Those preimages shrink to circles of radius $0$ as the 
point moves to the boundary. The restriction to $L(P)$   is generically $2:1$ 
over the smooth points of $P$ not in 
$\partial D$ and $1:1$ above $\partial D$. It is $4:1$ over the double points 
of $P$.

As example, let $\gamma:[-1,1] \to D$ define a divide $P$ in 
the disk $D$ with end points $A=\gamma(-1), B=\gamma(+1)$ on 
$\partial{D}\subset S^3_D$. 
The divide $P$ is also defined by $\gamma^*(t)=\gamma(-t)$. 
The knot $L(P)$ is the union of two oriented arcs on $S^3_D$, one running from 
$A$ to $B$, the other running from $B$ to $A$. The first arc, using $\gamma$ is 
parametrized by 
$$t\in [-1,+1] \mapsto 
(\gamma(t),\frac{\dot{\gamma}(t)}{||\dot{\gamma}(t)||}(1-||\gamma(t)||^2)^{1/2})\in
S^3_D$$
The second arc, similarly, uses $\gamma^*$. Both oriented arcs $A,B$ fit 
together 
as an oriented knot.

A variant is following construction:
First extend $\gamma$ to an immersion $\Gamma:[-\tau,+\tau]\times [-1,+1] \to 
D$ of a rectangular thickening of the interval $[-1,+1]$. Next, for small 
$\tau'$, restrict $\Gamma$ to a rectangle $S=[-\tau',+\tau']\times^* 
[-1+\tau',+1-\tau']$ where $\times^*$ means that we smooth the corners. Let 
$\partial^+ S$ be the boundary of $S$ with any chosen orientation. The 
restriction $\partial^+\Gamma$ is a generic immersion of an oriented copy of 
$S^1$ into $D$. The lifting of the oriented speed vector of $\partial^+\Gamma$ 
yields an oriented knot which is isotopic to the oriented knot $L(P)$. So knots 
coming from generic immersed intervals, come also from generically immersed 
oriented circles. William Gibson and Masaharu Ishikawa \cite{Gibson} have 
proven  that every 
oriented knot can be 
obtained by lifting a immersion of an oriented circle in $D$. 

Naturally, a small perturbation of a divide, produces an isotopic  links in 
$B^4_D$. We can if necessary make a divide by a small isotopy more ``Euclidean 
friendly'' without changing its link: for instance we may assume  without 
restricting generality that the divide $P$ is near its double points an 
orthogonal intersection of  segments. Furthermore, we can consider more 
involved (and no longer 
necessarily small) perturbations that still produce isotopic links. This leads 
to the following definition and lemma.

\begin{definition}\label{def:regular_isotopy}
	We say that an isotopy of a divide $P$ is admissible if it is generated by 
	Reidermeister moves of type $III$ only. See Fig.~\ref{fig:admissible}.
\end{definition}

\begin{figure}
	\centering \includegraphics*[scale=1]{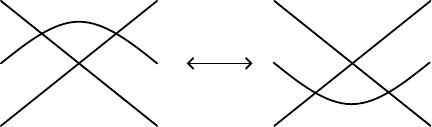}
	\caption{The movement caused by an admissible isotopy near an intersection 
		point.}
	\label{fig:admissible}
\end{figure}

\begin{lemma}\label{lem:regular_isotopy_link}
	If $P$ and $P'$ are related by an admissible isotopy, then $L(P)$ and 
	$L(P')$ 
	are isotopic in $\partial B_\rho$.
\end{lemma}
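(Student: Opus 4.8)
The plan is to show that an admissible isotopy, i.e. a Reidemeister III move performed near a triple point, lifts to an ambient isotopy of $S^3_D$ carrying $L(P)$ to $L(P')$. The key observation is that a Reidemeister III move on a divide is the passage through a pre-divide: at the instant where the three branches become concurrent we have a pre-divide $P_0$ with one ordinary triple point (as in Definition~\ref{def:pre_divide} and Fig.~\ref{fig:predivide_divide}), and on either side of this instant we have $P$ and $P'$. So it suffices to show that the link $L(P_0)$ associated to such a pre-divide is well defined and that both $L(P)$ and $L(P')$ are isotopic to it. This reduces the statement to a purely local analysis inside a small $4$-ball over a disk neighborhood $D'$ of the triple point, since outside this neighborhood $P$, $P_0$ and $P'$ all coincide.

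First I would set up the local model: choose coordinates so that the three branches near the triple point are (after the small ``Euclidean friendly'' isotopy already allowed) three straight line segments through the origin with distinct slopes, and the admissible isotopy translates one of them slightly to one side and then the other. The lifted link $L(P)$ near this region consists of three ribbons $T_{D'}\ell_i \cap S^3_{D'}$, one over each branch $\ell_i$; each ribbon is an annulus (a band) lying over its segment, parametrized as in the explicit formula in the excerpt by $(x, \frac{v_i}{\|v_i\|}(1-\|x\|^2)^{1/2})$ where $v_i$ is the constant direction of $\ell_i$. Because the three directions $v_1, v_2, v_3$ are pairwise distinct and the lift records the \emph{direction} $v_i$ in the fiber coordinate $u$, the three bands, viewed in $\mathbb{C}^2$, are pushed into three distinct ``directions'' in the $u$-factor and hence are pairwise disjoint away from $D'\times\{0\}$; the only intersections of the image link with the zero section occur over the triple point itself, and there the four-fold (for a double point) / higher-fold structure is governed only by the unordered set of slopes, which is unchanged by the move. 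This is exactly why translating one segment across the intersection of the other two does not change the isotopy type: the crossing data between bands $i$ and $j$ depends only on the pair of slopes $v_i, v_j$ and on whether $\ell_i, \ell_j$ actually meet inside $D'$, and a Reidemeister III move never changes which pairs of branches cross, only the cyclic order of the three crossings.

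Carrying this out, I would (i) build an explicit isotopy $\Phi_s$ of $S^3_{D'}$ that is the lift of the planar isotopy: since the planar isotopy is supported in $D'$ and moves a straight segment by a translation while keeping its direction essentially fixed, its differential acts on tangent directions close to the identity, so one gets an ambient isotopy of the ribbon bundle $S^3_{D'}$ interpolating between the configuration of three bands over $P$ and that over $P'$; (ii) check that at every intermediate time the three bands remain embedded and pairwise transverse (the disjointness-away-from-triple-point argument above), so $\Phi_s(L(P))$ stays a link and equals $L(P_s)$ for the intermediate divide/pre-divide $P_s$; (iii) extend $\Phi_s$ by the identity outside the $4$-ball over $D'$, which is legitimate because the planar isotopy is the identity near $\partial D'$ and the lift inherits this. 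Then $\Phi_1$ is the required ambient isotopy of $\partial B_\rho = S^3_D$ with $\Phi_1(L(P)) = L(P')$.

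The main obstacle I expect is step (ii): one must verify honestly that during the move the three lifted bands never develop an unwanted intersection or tangency, in particular at the moment the middle segment passes through the crossing of the other two. The point to get right is that over the momentary triple point the link $L(P_0)$ is still an embedded link — the three bands meet the zero section in the single circle over the origin but the four (or six) local strands there are distinguished by their slopes and so remain disjoint in $S^3$ — which is why passing through a pre-divide with an ordinary triple point is harmless, whereas passing through a tangency (a Reidemeister I or II move) would create a non-generic pre-divide violating Definition~\ref{def:pre_divide} and could change the link. So the heart of the proof is the local embeddedness of $L(P_0)$ for the triple-point pre-divide $P_0$, after which the isotopy extension theorem does the rest.
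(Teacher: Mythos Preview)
Your proposal is correct and is essentially a detailed fleshing-out of the paper's own proof, which is a one-liner: the paper simply asserts that the admissible isotopy of Fig.~\ref{fig:admissible} can be explicitly lifted, via the defining formula eq.~\ref{eq:link_divide}, to an isotopy of links. You supply exactly the local analysis that justifies this assertion, correctly identifying the crucial point that at the instant the move passes through the triple-point pre-divide the lifted curve is still an embedded link because the three tangent directions are pairwise distinct (this is precisely the content of Definition~\ref{def:pre_divide}).

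One small terminological slip: you speak of the three bands ``meeting the zero section'' over the triple point, but over any interior point $x\in D$ the fiber of $S^3_D$ is the circle $\{(x,u):\|u\|^2=\rho^2-\|x\|^2\}$ with $u\neq 0$, so the lifted strands never touch $D\times\{0\}$. What you mean, and what makes the argument work, is that over the triple point the six lifted points sit at six distinct locations on that fiber circle, distinguished by the three distinct tangent slopes. With that correction the argument is clean.
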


\begin{proof}
	Following the definition eq.~\ref{eq:link_divide}, it is possible to 
	explicitly 
	lift an admissible isotopy as the one described in 
	Fig.~\ref{fig:admissible} to 
	an 
	isotopy of links of divides as defined by Definition~\ref{def:link_divide}.
\end{proof}

\begin{example}
	Note that admissible isotopies do not yield a complete set of moves to go 
	from 
	one divide $P$ to another one $P'$ with $L(P) \sim L(P')$. 
	Indeed it can be
	verified that the links from the figure yield isotopic knots but one cannot 
	go
	from one to another only admissible isotopies, see Gibson-Ishikawa, two 
	divides for knot $10_{139}$ not related by III-moves.
	
	Knots or links $L(P)$ for divides $P$ are very special. For instance let 
	$L(P)$ be the knot of a divide $P\subset D$. The symmetry given by the 
	involution $(x,u) \mapsto (x,-u)$ fixes point-wise $\partial D\subset 
	S^3_D$  and preserves $L(P)$ globally, with two fixed point on $L(P)$. So 
	the knot $L(P)$ is strongly invertible, see Makoto Sakuma \cite{makoto}. 
	Divides links 
	$L(P)$ 
	are closures of quasi positive braids by Tomomi Kawamura's result 
	\cite{Kawamura}.

	\begin{figure}
		\huge
		\centering 
		\resizebox{0.7\textwidth}{!}{\centering 
			\includegraphics*{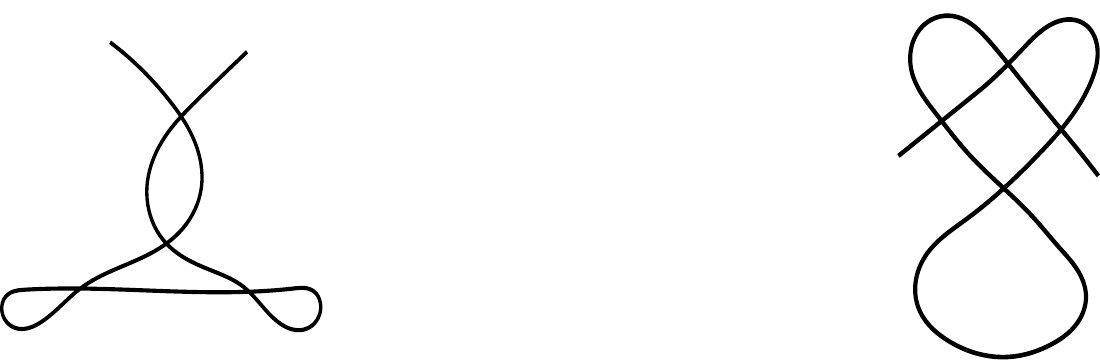}}
		\caption{Two divides that yield the same knot but that are not 
			equivalent 
			through admissible isotopies.}
		\label{fig:two_divides}
	\end{figure}

\end{example}

\subsection{Adapted Morse function $f_P$ on $D$ and its extension $\theta_{P}$ 
	on $B^4_D$ }

\begin{definition} \label{def:adapted_function}
	Let $P \subset D$ be a divide. We say that a smooth function 
	\[
	f_P:D \to \mathbb{R}
	\]
	is {\em adapted} to $P$ if 
	\begin{enumerate}
		\item \label{it:generic} $f_P$ is locally generic, i.e. it only has 
		non-degenerate 
		critical points
		\item \label{it:max_min} each bounded region has exactly one 
		non-degenerate 
		maximum or minimum.
		in its interior with critical values $\pm 1$
		\item \label{it:saddle} the saddle points of $f_P$ are exactly at the 
		double points of $P$ and $P=f_P^{-1}(0)$.
		\item each contractible unbounded region has exactly one non-degenerate 
		maximum or 
		minimum on its intersection with $\partial D$.
		\item  if $P \cap \partial D = \emptyset$ then the restriction of $f_P$ 
		to $\partial D$ 
		is constantly $\pm 1$.
	\end{enumerate}
	Note that $f_P$ is a Morse function in a disk slightly bigger than  $D$ 
	where 
	we allow different critical points to have the same critical values.
\end{definition} 

\begin{lemma}\label{lem:existence_adpated_function}
	Let $P \subset D$ be a connected divide. Then there  exist  adapted 
	functions 
	$f_P:D 
	\to \mathbb{R}$.
\end{lemma}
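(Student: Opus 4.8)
The plan is to construct $f_P$ by hand, region by region, and then patch the local models together. I would begin by fixing a model for the divide near each of its distinguished features: near an interior double point of $P$ the curve looks (after an admissible isotopy, which by Lemma~\ref{lem:regular_isotopy_link} does not change the link and which we may as well perform) like the two coordinate axes in a small square, and there the obvious local model is $f_P(x,y)=xy$, which has a non-degenerate saddle at the double point with $P$ as its zero set. Near a point of $P\cap\partial D$ the curve meets the boundary transversally, and a local model $f_P(x,y)=y$ (with $P=\{y=0\}$ and $x$ the coordinate along $\partial D$) works. These local pieces already force conditions \ref{it:saddle} and \ref{it:generic} in a neighborhood of $P\cup\partial D$; the remaining freedom is in the interior of the complementary regions.

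Next I would extend $f_P$ from a neighborhood of $P\cup\partial D$ into each connected region of $D\setminus P$. The zero set $P$ partitions $D$ into finitely many regions, and by the standard chessboard-colouring argument (each time one crosses a branch of $P$ the sign of a candidate function flips, and since $P$ has only double points this colouring is consistent) one can prescribe on each region a target sign, $+$ or $-$. On a region whose closure does not meet $\partial D$ in an arc — a bounded region — one has on its boundary a function that is zero on $P$ and has the prescribed sign in the interior collar; one extends this to a smooth function on the whole closed region that is positive (resp. negative) in the interior, equals the given boundary data near the boundary, and, after a small generic perturbation supported in the interior, is Morse with a single critical point there, a maximum or minimum according to the sign, with critical value normalized to $\pm1$. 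This is where conditions \ref{it:max_min} is arranged. For the unbounded contractible regions one does the analogous construction but places the single critical point on the arc $\overline{\text{region}}\cap\partial D$, handling the boundary case \ref{it:saddle}; if $P\cap\partial D=\emptyset$ the disk minus $P$ has one outermost region abutting all of $\partial D$ and one makes $f_P$ equal to the constant $\pm1$ there near $\partial D$, giving \ref{it:saddle}. The hypothesis that $P$ is connected is used to ensure the regions are arranged so that this prescription of signs and critical points is globally consistent and each region is as described (in particular there is a sensible notion of ``the'' unbounded region behaviour).

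The one genuinely delicate point is the patching: I must check that the local saddle models at double points, the collar model along $P$, and the region-by-region extensions agree on overlaps and can be glued into a single smooth function without introducing spurious critical points. The clean way to do this is to fix first a smooth function $h$ defined only in a neighborhood $U$ of $P\cup\partial D$ that realizes the saddle models and the correct signs, then for each region $R$ take a smooth extension $h_R$ of $h|_{U\cap R}$ to $\overline R$, and finally glue the $h_R$ along $P$ using a partition of unity subordinate to the region structure — but \emph{near $P$ all the $h_R$ already equal $h$}, so the partition of unity only interpolates in the interior of each region where there is a unique sign, hence no cancellation and no new zeros of the gradient are created away from the one prescribed critical point. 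A final global generic perturbation supported away from $P\cup\partial D$ then makes the critical points non-degenerate and their values exactly $\pm1$, and renumbering is harmless. The main obstacle is thus purely the bookkeeping of the gluing: making sure the interpolation happens only in regions of constant sign, so that Rolle-type phantom critical points cannot appear, and that the required number of critical points per region is exactly one. Everything else is a routine application of Morse-theoretic surgery on functions.
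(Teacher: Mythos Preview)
Your approach is essentially the paper's: straighten the crossings by a small isotopy, two-colour the complementary regions, plant local models (a saddle at each double point, one non-degenerate max/min per bounded region, a boundary extremum on each contractible unbounded region), and extend smoothly without introducing extra critical points---the paper writes explicit Euclidean models $\pm(1-\|q-p\|^2)$ and $L(q)K(q)$ on small disjoint balls and then asserts the extension, while you organise the gluing via a partition of unity, but the substance is identical. One small correction: the isotopy you invoke to make crossings standard is an ordinary small ambient isotopy, not an ``admissible isotopy'' in the sense of Definition~\ref{def:regular_isotopy} (those are Reidemeister~III moves), so Lemma~\ref{lem:regular_isotopy_link} is not the right reference; and your later citations of condition~\ref{it:saddle} in the boundary discussion should point to items~(4) and~(5) of Definition~\ref{def:adapted_function}.
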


\begin{proof}
	We may change the divide $P\subset D\subset \mathbb{R}^2$ by a small 
	isotopy in order to make it ``Euclidean friendly'', i.e. at its double 
	points 
	$P$ will consist of a pair of Euclidean orthogonal segments. Every 
	connected component of $D\setminus P$, that we also call region, does not 
	meet $\partial D$ or meets $\partial D$ in a connected set since by 
	hypothesis $P$ is connected. We mark each such connected set by one point 
	if it is a contractible arc in the boundary of $D$.
	We color the regions by signs $+$ and $-$ in a chequerboard way, i.e. two 
	regions  separated by a subinterval in $P$ will have opposite signs. This 
	is possible since $D$ is simply connected and $P$ as planar graph has only 
	vertices of valency $4$. Next, we mark each region that not meets the 
	boundary of $D$ with a point. Let $r>0$ be a real such that the Euclidean 
	balls of radius $r$ in $\mathbb{R}^2$ with center a marked point are 
	disjoint from $P$, from $\partial D$ if its center is not on $\partial D$, 
	and from each other. Assume also that the intersection of $P$ with a ball 
	of radius $r$ and center a double point of $P$ is the union of two 
	Euclidean segments. We denote by $B_p$ the ball of radius $r$ with center 
	$p$ a marked point or double point of $P$. \newline\indent
	Now we can construct the function $f_P$. Define $f_P$ at a marked point 
	$p$ or double point of $P$ to be standard Euclidean  expression defining a 
	local maximum, local minimum, or saddle point according sign or to be a 
	double point of $P$. So on $B_p$ define $f_P(q)=1-||q-p||^2$ if $p$ is in a 
	$+$ region, $f_P(q)=-1+||q-p||^2$ or $f_P(q)=L(q)K(q)$ where $L,K$ are 
	local signed  linear equations of Euclidean norm $1$ for the segments of 
	$P$ at $p$. For a marker point $p\in \partial D$ we define $f_P$ on $B_p$ 
	as a norm $1$ linear boundary maximum or minimum with value $1$ or $-1$. If 
	$P$ does not meet $\partial D$ we define $f_P(q)=\pm 1, q\in \partial D$ 
	according the sign of the adjacent region. Finally put $f_P(q)=0$ for $q\in 
	P$. The so partially defined function extends in a up to isotopy unique way 
	to a smooth function $f_P:D\to \mathbb{R}$ that has no singularities except 
	at the marked points and double points of $P$.
\end{proof}

Let $\beta: \mathbb{R} \to \mathbb{R}$ be a  non negative smooth bump function
that evaluates to $1$ for $t\in [0,r/2]$ and to $0$ for $t\geq r$. Let $\chi: D 
\to \mathbb{R}$ be the smooth function that evaluates to $0$ on the complement 
of the union $\cup B_p$ and to
$\beta(||p-q||), q\in B_p$.\newline\indent
We define a complex function $\theta_{P,\eta}:B^4_D\to \mathbb{C}$
that depends on a real small parameter $\eta>0$ by
\begin{equation}\label{eq:theta_func}
	\theta_{P,\eta}(x,u):=f_P(x)+i \eta\, df_P(x)(u)-
	{1\over 2}\eta^2\chi(x)H_{f_P}(x)(u,u)
\end{equation}
The function $\theta_P=\theta_{P,\eta}$ satisfies the Cauchy-Riemann equation 
at points in $D$ and points in $4$-balls of radius $r/2$ in 
$T\mathbb{R}^2=\mathbb{C}^2$ and center a Morse critical point of $f_P$.   
These properties help to prove in next section a fibration theorem and compute 
the geometric monodromy, mainly following pioneering work of Milnor and 
Brieskorn.

More precisely, the argument function 
$\pi_{P,\eta}=\frac{\theta_{P,\eta}}{|\theta_{P,\eta}|}$ defines on $S^3_P$ an 
oriented open book structure with smooth binding like the argument function 
$\frac{f}{|f|}$ does on the Milnor sphere of an isolated hypersurface 
singularity 
$f=0$. More over $\theta_{P,\eta}$ behaves like a morsification of the
singularity	$f$, which allows a computation of the monodromy of that open 
book following:

\section{General fibration theorem} \label{s:fibration_thm}

\subsection{A connected divide yields a fibered link}
A main results from 
\cite{Acampo_generic_imm}   about divides is:

\begin{theorem}\label{thm:fibered_link}
	Let $P$ be a connected divide. For $\eta > 0$ and 
	sufficiently small, the map $\pi_P:=\pi_{P,\eta}$ induces an oriented open 
	book 
	decomposition of $S_P^3$ with binding $L(P)$, that is:
	\begin{enumerate}
		\item \label{it:fibration} $\pi_P$ is a fibration 
		of the complement of $L(P)$ over $S^1$,
		\item \label{it:near_binding} For all $\alpha \in S^1$, the boundary of 
		each 
		fiber (page of the open 
		book) is the binding, i.e., $\partial \pi_P^{-1}(\alpha) = 
		L(P)$
	\end{enumerate} 
\end{theorem}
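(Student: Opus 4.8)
The plan is to follow Milnor's and Brieskorn's strategy for hypersurface singularities, adapted to the function $\theta_{P,\eta}$. The central observation is that $\theta_{P,\eta}$ is, by construction (eq.~\ref{eq:theta_func}), holomorphic in the Cauchy--Riemann sense on $D$ itself and on each small $4$-ball of radius $r/2$ around a Morse critical point of $f_P$ in $T\mathbb{R}^2 = \mathbb{C}^2$. Thus near the binding and near the critical points it genuinely behaves like a complex morsification $f + \eta\ell$ of an isolated plane curve singularity, and away from these loci one has good analytic control on the size of the ``error term'' $-\tfrac12\eta^2\chi(x)H_{f_P}(x)(u,u)$ for $\eta$ small. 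First I would verify that for $\eta>0$ sufficiently small $\theta_{P,\eta}$ has no critical points on $S^3_P \setminus L(P)$: on the region where $\chi\equiv 1$ the function is holomorphic and its only critical points are the Morse critical points of $f_P$, which lie in the interior of $B^4_D$, not on the sphere; on the complementary region the differential of $\theta_{P,\eta}$ is an $O(\eta)$-perturbation of the differential of $x\mapsto f_P(x)+i\eta\,df_P(x)(u)$, whose real and imaginary parts have linearly independent differentials away from $L(P)$ because $f_P$ is Morse and its critical set is disjoint from that region. Hence $\pi_P = \theta_{P,\eta}/|\theta_{P,\eta}|$ is a submersion on $S^3_P\setminus L(P)$.

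Next I would establish the transversality/non-tangency condition of Milnor: on a small sphere there are no trajectories of the gradient-type vector field that are tangent to the sphere along $L(P)$, equivalently the map $\pi_P$ has no critical points even ``at infinity'' near the binding, so that the fibers meet $S^3_P$ transversally and close off with common boundary $L(P)$. Concretely, I would show that near $L(P)$ the vector $\bar\partial$-holomorphicity of $\theta_{P,\eta}$ forces $\theta_{P,\eta}$ to look locally like $z\mapsto z$ or $z\mapsto z^2$ composed with a submersion onto $\mathbb{C}$, transverse to the sphere, so item~\eqref{it:near_binding} holds and the argument map extends to the open book structure. This gives, via Ehresmann's fibration lemma applied to the submersion $\pi_P$ on the compact manifold-with-boundary $S^3_P\setminus N(L(P))$, the fibration in item~\eqref{it:fibration}; completeness of the flow is automatic by compactness.

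The main obstacle is the estimate controlling the error term $-\tfrac12\eta^2\chi(x)H_{f_P}(x)(u,u)$ on the intermediate region, where $\chi$ interpolates between $1$ and $0$ and $\theta_{P,\eta}$ is no longer holomorphic: one needs that $d\theta_{P,\eta}$ stays of full (real) rank $2$ there, uniformly as the base point ranges over $S^3_P\setminus L(P)$, for all sufficiently small $\eta$. Since $df_P\neq 0$ on this region (its zeros are the Morse critical points, all inside the balls $B_p$ where $\chi\equiv 1$) and $|u|$ is bounded on $S^3_P$, the real part $f_P(x) - \tfrac12\eta^2\chi(x)H_{f_P}(x)(u,u)$ and imaginary part $\eta\,df_P(x)(u)$ have, after rescaling the imaginary part by $\eta^{-1}$, differentials that converge as $\eta\to 0$ to $df_P$ and $d(df_P(\cdot)(u))$ respectively, which are independent precisely off the critical locus and off $L(P)$; a compactness argument then yields the uniform lower bound on the rank for $\eta$ small, completing the proof.
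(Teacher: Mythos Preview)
Your overall strategy is the same as the paper's: split $S^3_P\setminus L(P)$ into a tubular neighborhood $N$ of the binding, a region $U'$ over the Morse critical points of $f_P$, and the remainder, and verify on each that $\pi_P$ is a submersion, then conclude by Ehresmann. The paper also uses the holomorphicity of $\theta_{P,\eta}$ near the critical points and a smallness-of-$\eta$ argument on the remainder, so in spirit you are doing the same thing.

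There is, however, a genuine gap in your execution. Throughout you check that $d\theta_{P,\eta}$ has real rank $2$ as a map $T_pB^4_D\to\mathbb{R}^2$ (equivalently, that $\theta_{P,\eta}$ has no critical points in $B^4_D$ near $S^3_P$). That is \emph{not} the condition you need: what must be shown is that the restriction $\pi_P|_{S^3_P}$ is a submersion, i.e.\ that the level hypersurfaces of $\arg\theta_{P,\eta}$ are transverse to $S^3_P$. Full rank of $d\theta_{P,\eta}$ on $TB^4_D$ does not imply this; it is perfectly possible for the $2$-dimensional kernel of $d\theta_{P,\eta}$ to sit inside $T_pS^3_P$, in which case $\pi_P|_{S^3_P}$ has a critical point at $p$ even though $\theta_{P,\eta}$ does not. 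Your limit argument in the last paragraph shows only that $d\mathrm{Re}\,\theta$ and $d\mathrm{Im}\,\theta$ are linearly independent as covectors on $B^4_D$, not that their restrictions to $T_pS^3_P$ remain independent. Likewise, your appeal to Milnor's argument in the holomorphic region is too quick: Milnor's proof that $f/|f|$ has no critical points on a small sphere is global (curve selection lemma) and does not localize to a function holomorphic only on part of the ambient ball.

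The paper sidesteps this by computing the Jacobian of $\pi_P$ itself (not of $\theta_{P,\eta}$) in explicit local coordinates near each critical point of $f_P$; the computation shows that the nonzero row of $D\pi_P$ at a point $(0,0,u,v)\in U'$ lives in the $x$-directions, which are tangent to $S^3_P$ there, so the restriction to $TS^3_P$ is already surjective. For the intermediate region it then argues that the bound on $\|d\chi\|$ makes this same Jacobian computation go through for $\eta$ small. If you want to save your approach, the cleanest fix is to upgrade your rank-$2$ claim to the statement that $\theta_{P,\eta}|_{S^3_P}$ is a submersion to $\mathbb{C}$ on each region (equivalently, that the position vector $(x,u)$ is never in the span of $\nabla\mathrm{Re}\,\theta_{P,\eta}$ and $\nabla\mathrm{Im}\,\theta_{P,\eta}$), which then implies the submersion property for $\pi_P|_{S^3_P}$; but checking this requires essentially the same explicit computation the paper does.
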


\begin{proof}
	There exists a regular product 
	tubular neighborhood $N$ of $L(P),$ such that the map $\pi_{P,\eta}$ 
	for  any $1 \geq \eta > 0$ is on $N \setminus L(P)$  a
	fibration over $S^1$, 
	for which  near $L(P)$ the fibers look like the pages of a book 
	near its back. It is crucial to 
	observe that in the intersection of the link $L(P)$ with 
	the support of the function 
	\[
	(x,u)\in S^3 \mapsto \chi(x)\in \mathbb{R}
	\]
	the kernel of the Hessian of 
	$\theta_{P,\eta}$ and the kernel of the differential of the map 
	
	\[
	(x,u)\in S^3 \mapsto f_P(x)\in  \mathbb{R}
	\]
	coincide. For an alternative proof of this step, look at 
	\ref{rem:rem_fib_near_link} and the discussion before.
	
	For any $\eta > 0$, 
	the map $\pi_{P,\eta}$ is regular at each point of
	$U':=\{(x,u) \in S^3 | x \in U\}$ were $U$ is a small neighborhood of the 
	critical points of $f_P$. Take now local real coordinates 
	$(x,y,u,v)$ 
	around a maximum $p \in D$ 
	so that $f_P(x,y)=1 -x^2-y^2$ and $p = (0,0)$. In these coordinates, seeing 
	$\mathbb{C} \simeq \mathbb{R}^2$, we have that $\theta_P$ looks like
	
	\[
	\theta_P(x,y,u,v) = \left({\left(u^{2} + v^{2}\right)} \eta^{2} - x^{2} - 
	y^{2} 
	+ 
	1,\,-2 \eta (u x + v y)\right)
	\]
	and 
	
	\begin{align*}
		\pi_P(x,y,u,v) =&
		\left(\frac{{\left(u^{2} + v^{2}\right)} \eta^{2} - x^{2} - 
			y^{2} 
			+ 1}{\sqrt{{\left({\left(u^{2} + v^{2}\right)} \eta^{2} - x^{2} - 
					y^{2} + 
					1\right)}^{2} + 4 \eta^2 {\left(u x + v y\right)}^{2}}}, 
		\right.
		\\
		&
		\left. \,-\frac{2 \, {\left(\eta 
				u x + \eta v y\right)}}{\sqrt{{\left({\left(u^{2} + 
						v^{2}\right)} \eta^{2} - 
					x^{2} - 
					y^{2} + 1\right)}^{2} + 4 \eta^2 {\left(u x + v 
					y\right)}^{2}}}\right)
	\end{align*}
	
	Let $(0,0,u,v)$ be a point in $U'$ that projects to $p$. Then, the 
	differential 
	of $\pi_P$ evaluated at that point equals:
	\[
	\left(\begin{array}{rrrr}
		0 & 0 & 0 & 0 \\
		-\frac{2 \, \eta u}{{\left(u^{2} + v^{2}\right)} \eta^{2} + 1} & 
		-\frac{2 
			\, \eta 
			v}{{\left(u^{2} + v^{2}\right)} \eta^{2} + 1} & 0 & 0
	\end{array}\right)
	\]
	and the two non-zero entries of the matrix can't be $0$ at the same time 
	for a 
	point of the form $(0,0,u,v) \in S^3$ since $u^2+v^2=1$. This shows that 
	the 
	jacobian of $\pi_P$ has rank $1$ on those points and hence, $\pi_P$ is a 
	submersion on a neighborhood $U'$. The computations for a minimum or a 
	saddle 
	point are very similar.
	
	Finally, since the norm of the derivative of $\chi$ can be bounded, a 
	similar 
	computation yields that there exists $\eta_0>0$ such that 
	for 
	any $\eta$ with  $0<\eta<\eta_0$, the map $\pi_{P,\eta}$ is regular on 
	$S^3\setminus (N\cup U').$ Hence,  for $\eta$ sufficiently small the map 
	$\pi_{P,\eta}$  is  a 
	submersion, so since already a fibration near 
	$L(P),$ it is a fibration by Ehresmann fibration theorem. 
	
\end{proof}

\section{Divides for plane curve singularities} \label{s:divides_plane_curves}

In this section we construct a divide $P$ associated to a given isolated plane 
curve 
singularity in the sense that $L(P)$ is a model for the link of the plane curve 
sngularity. 
What we actually produce is a \index{real morsification} {\em real 
	morsification} of a totally real plane 
curve singularity; and this real morsification yields a divide. We do so, 
following two different programs: the first one 
Sect.~\ref{ss:divides_embedded} gives an algorithm to construct such a divide 
from 
an embedded resolution of the plane curve, the second one, in 
Sect.~\ref{ss:divide_chebyshev} gives a closed formula that produces a divide 
from a 
complete set of topological invariants of a plane curve.

\subsection{Totally real models of plane curve singularities}

For both methods,  it is necessary to start with a {\em totally real} 
model of our isolated plane curve singularity.

\begin{definition}\label{def:totally_real}
	Let $f\in \mathbb{C}\{x,y\}$ be a converging power series and let $f = f_1 
	\cdots 
	f_r$ be a factorization into irreducible factors. We say that $f$ is {\em 
		real} or that it defines a {\em real plane curve singularity} if it is 
	given by a real equation, that is, if $f \in \mathbb{R}\{x,y\}$. We 
	say that $f$ is {\em 
		totally real} if $f_i \in \mathbb{R}\{x,y\}$ for all $i = 1, \ldots, r$.
	
	Equivalently, $f$ is real if the curve $(f^{-1}(0),0)$ is invariant under 
	complex conjugation and $f$ is totally real if each of its branches is real.
\end{definition}

Once one has a real plane curve singularity, one can try to perturb it in a 
controlled way with the hope that all the critical points (as a complex 
function) are actually real and as simple as possible: Morse. It turns out that 
if one is able to do this, the real picture picture of the deformed curve tells 
us a lot about the complex singularity. Actually, this contains all the 
topological information of the plane curve singularity. In particular one is 
able to recover a \index{distinguished basis} distinguished basis of 
\index{geometric vanishing cycle} 
geometric vanishing cycles. These 
special types of deformations are 
called real morsifications:

\begin{definition}\label{def:real_morsification}
	Let $f:\mathbb{C}^2 \to \mathbb{C}$ define an isolated real plane curve 
	singularity, let 
	$B_\epsilon \subset \mathbb{R}^2$ be a Milnor ball for $f$ and let 
	$D_{\delta}$ be 
	a Milnor disk for $f$. We say that a family of convergent power series 
	$\{f_t\}_{t \in [0, \tau)}$ is a  {\em real morsification} of $f$ if
	\begin{enumerate}
		\item it is a deformation of $f$, that is, $f_0=f$,
		\item $f_t$ is real for all $t$,
		\item for all $t \in (0,\tau)$ the map $f_t$ only has Morse critical 
		points and they all lie in $B_\epsilon$. Furtheremore, the number of 
		Morse critical points does not depend on $t$.
		\item all its critical points are real, that is, they lie in 
		$\mathbb{R}^2 
		\subset \mathbb{C}^2$.
		\item all saddle points lie over $0$.
	\end{enumerate}
\end{definition}

In the next two sections we prove, in two different ways, that there exist real 
morsifications for totally real isolated plane curve singularities. It is still 
an open question if there exist real morsifications for all isolated real plane 
curve singularities. See \cite{Lev} for a partial solution to this question and 
see \cite{Fom} for the implications of a solution of this problem in other 
conjectures.

\begin{remark}\label{rem:totally_real_model}
	It follows from the topological classification of isolated plane curve 
	singularities that, given a topological type, it is always possible to find 
	a 
	totally real isolated plane curve singularity realising that topological 
	type 
	(see \cite{ZarI,ZarII,ZarIII} for a classical reference or \cite[Corollary 
	5.3.3 and Lemma 2.3.1]{Wall} for a more recent and concrete reference).  
	See also the previous related \ref{rem:geom_invariant}
\end{remark}

\subsection{Divides from an embedded resolution} \label{ss:divides_embedded} In 
this section we reproduce 
the proof of one of the main results (Th\'eor\`eme 1.) from 
\cite{Acampo_groupe_I} which proves 
the existence of divides for plane curve singularities. More concretely, the 
proof gives an algorithm to construct a polynomial real deformation of a plane 
curve singularity under the hypothesis that the singularity is given by a 
totally real polynomial.

\begin{theorem}\label{thm:existence_divides}
	Let $f(x, y)$ be a totally real polynomial defining an isolated plane curve 
	singularity. Then there exists a real polynomial deformation $f(x, y; t), t 
	\in \mathbb{R}$, such that $f(x, y; 0)=f(x, y)$ and that for all $t \neq 0, 
	t 
	\in \mathbb{R}$ and small enough, we have
	
	\begin{enumerate}
		\item the real curve $C_{t}=\left\{ (x, y) \in D_{\varepsilon} | f(x, 
		y; 
		t)=0\right\}$ is a divide with $r$ branches in a small disk 
		$D_{\varepsilon}$ 
		near the origin.
		
		\item the number $k$ of double points of $C_{t}$ verifies $2 
		k-r+1=\mu$, where 
		$\mu$ is the Milnor number of $f$ at $O$.
	\end{enumerate}
\end{theorem}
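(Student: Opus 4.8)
The plan is to induct on the number of blow-ups needed to resolve the singularity, using the classical alternating procedure of perturbing the strict transform and contracting $(-1)$-curves. First I would set up the embedded resolution $\pi\colon (\tilde X, E) \to (\mathbb C^2, 0)$ of $f$; since $f$ is totally real, every infinitely near point can be taken real and the whole tower of blow-ups can be performed $\mathbb R$-equivariantly. The base case is when the strict transform is already smooth and transverse to the exceptional divisor: here the curve is itself a (disjoint union of arcs forming a trivial) divide with $k=0$ double points, $r$ branches, and $\mu = 0$, so $2k - r + 1 = \mu$ holds vacuously for the corresponding local pieces. The inductive step is the heart of the matter: given a divide realizing the total transform on the blown-up surface, one contracts an exceptional $(-1)$-curve, which merges several local strands of the divide through the point being blown down, and \emph{before} contracting one performs a small real perturbation that turns the resulting higher-order tangencies/multiple points into ordinary double points. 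One must check that (a) the perturbation can be chosen real and polynomial, (b) after contraction the image is still a divide (only ordinary double points, transverse to $\partial D_\varepsilon$), and (c) the count $2k - r + 1 = \mu$ is preserved.

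For the bookkeeping in (c), I would track how each elementary operation changes the three quantities. Contracting a $(-1)$-curve that meets the strict transform with total intersection $m$ (counted with multiplicity over all branches) creates, after the smoothing perturbation, a controlled number of new nodes; the key identity is that each node contributes $1$ to the Milnor number of a nodal degeneration and that the total Milnor number is additive over the resolution via the formula $\mu = \sum (\text{local contributions}) $ together with $\mu - 1 = \sum_p (\mu_p - 1)$-type additivity for the nodal curve $C_t$ (a nodal plane curve with $k$ nodes and $r$ branches has $b_1$ of its smoothing equal to $2k - r + 1$, which is exactly the Milnor number because $C_t$ is a real morsification-type deformation and the genus formula for the smoothing of a curve with $\delta$-invariant $\delta = k$ gives $\mu = 2\delta - r + 1$). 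So the arithmetic reduces to the classical fact $\mu = 2\delta - r + 1$ for plane curve singularities (equivalently $\delta = \binom{\mu+r-1}{2}$-type relations), combined with the observation that the perturbed nodal curve $C_t$ has exactly $\delta$ nodes, i.e. $k = \delta$. I would verify this by noting that each blow-up/contraction cycle is $\delta$-constant in the appropriate sense and that the final perturbation concentrates all of $\delta$ into transverse double points.

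Concretely, the steps in order: (1) choose a real embedded resolution; (2) prove the equivariant perturbation lemma — a real strict transform meeting $E$ with a tangency of order $\ell$ admits an arbitrarily small real polynomial perturbation making the intersection $\ell$ transverse points, and similarly for self-tangencies of the strict transform arising from a cluster of infinitely near points; (3) show the contraction of a real $(-1)$-curve of the perturbed configuration yields again a (pre-)divide, and a further small perturbation makes it an honest divide; (4) descend the induction to the original $D_\varepsilon$, obtaining $C_t$; (5) count: the number of double points equals $\delta(f)$, and invoke $\mu = 2\delta - r + 1$ to conclude $2k - r + 1 = \mu$; (6) finally note that the intermediate curves in the induction are cut out by real polynomials depending polynomially on a parameter, so assembling them (with a rescaling of the deformation parameter at each stage) produces the single real polynomial deformation $f(x,y;t)$ claimed in the statement, with the branch count $r$ preserved because contractions never disconnect strands that correspond to distinct branches of $f$ (distinct branches have distinct tangent cones after enough blow-ups and remain distinct arcs of the divide).

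The main obstacle I expect is step (2)–(3): controlling the perturbation so that contracting the $(-1)$-curve produces \emph{exactly} ordinary double points and no worse singularities, and doing so with a \emph{real polynomial} of the parameter $t$ — one has to be careful that the perturbation chosen at a later (lower) stage does not destroy the transversality achieved at an earlier (higher) stage, which forces a hierarchy of size parameters $t \gg t' \gg t'' \gg \cdots$ and a final reparametrization. Keeping the whole family real and simultaneously morsifying all critical points (so that in the end all saddles lie over $0$, matching Definition~\ref{def:real_morsification}) is the delicate part; the rest is the additivity bookkeeping for $\mu$, which is standard once the geometric picture is pinned down.
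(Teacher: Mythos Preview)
Your proposal is correct and follows essentially the same route as the paper: perform a real embedded resolution (possible because $f$ is totally real), then alternate between perturbing the strict transform and contracting exceptional components to descend a pre-divide to $D_\varepsilon$, and finally count double points via Milnor's formula $\mu = 2\delta - r + 1$. The paper organizes the descent slightly differently --- it goes to the top of the tower once and then comes down level by level with an explicit translation formula at each stage, landing first on a \emph{pre-divide} (Lemma~\ref{lem:pre_divide}) with ordinary multiple points, and only afterwards applies a separate polynomial perturbation (Lemma~\ref{lem:generic_perturbation}) to break those into nodes --- but this is a matter of presentation, not substance. Your anticipated obstacle (controlling the hierarchy of perturbation sizes so that later moves do not spoil earlier transversality, and packaging everything into a single real polynomial parameter) is exactly the technical content the paper handles by introducing independent parameters $t_{N-1}, t_{N-2}, \ldots, t_1$ and then restricting to the diagonal $t_i = t$; the key point you glossed over, and which the paper makes explicit, is that the perturbation at level $n$ descends to a function on $\mathbb{C}^2$ precisely because the exceptional multiplicity factor $\lambda_n^{m_n}$ is left undisturbed.
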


Next we explain the main construction on which its based the proof of 
Theorem~\ref{thm:existence_divides} which consists in iteratively blowing up 
until, 
exactly, the strict transform is resolved and consists of smooth branches 
meeting transversely (but not in normal crossings) the exceptional divisor. At 
this point, we translate slightly the strict transforms along some exceptional 
divisor and blow-down back all the exceptional divisors, proving that the 
translation yields a deformation of the 
original plane 
curve in $\mathbb{C}^2$. Performing this operation as many times as necessary, 
yields 
a pre-divide (recall Definition~\ref{def:pre_divide})  which can be further 
polynomially 
deformed into an actual divide. 

Next we explain this process in more detail.
Let $f(x, y)$ be a totally real polynomial defining an isolated plane curve 
singularity $C$. Let
\[
\pi_{n}: X_{n} \rightarrow X_{n-1}, \quad 1 \leq n \leq N
\]
be a sequence of transformations that yield an embedded resolution of $C_0$, 
where

\begin{enumerate}
	\item $X_{0}=\mathbb{C}^{2}$
	\item $\pi_1:X_1 \to X_0$ is the blow-up at the origin.
	\item $\pi_k:X_k \to X_{k-1}$ for $i=2, \ldots, N-1$ is a blow-up at a 
	point in the exceptional divisor $E_{k-1}$ of $\pi_1 \circ \cdots \circ 
	\pi_{k-1}$
	\item $H_{N-1}=\left\{ z \in X_{N} | f \circ \pi_{1} \circ \cdots \circ 
	\pi_{N-1}(z)=0\right\}$ is a union of pairwise transversal smooth curves.
	\item $\pi_N:X_N \to X_{N-1}$ is a composition of blow ups that resolves 
	all the ordinary singularities of $H_{N-1}$ where two or more components of 
	the strict transform of $C$ meet
	\item  $H_{N}=\left\{ z \in X_{N} | f \circ \pi_{1} \circ \cdots \circ 
	\pi_{N}(z)=0\right\}$ is  a hypersurface in $X_{N}$ having as singularities 
	only normal crossings.
\end{enumerate}
In other words, $\pi_1, \ldots, \pi_{N-1}$ is an usual sequence of blow ups of 
the minimal resolution of the plane curve defined by $f$ just until the strict 
transform consists of smooth curves not meeting in normal crossings, and 
$\pi_N$ is the composition of all the remaining blow ups that give an embedded 
resolution of the plane curve.

For $1 \leq n \leq N$, 
we denote by
\[
H_{n}=\left\{ z \in X_{n} | f \circ \pi_{1} \circ \cdots \circ 
\pi_{n}(z)=0\right\}
\] 
the total transform in $X_n$, by
\[
E_{n}=\left\{ z \in X_{n} | \pi_{1} \circ \cdots \circ \pi_{n}(z)=0\right\} 
\subset H_{n} \subset X_{n}
\] 
the exceptional divisor in $X_n$, and by $B_{n}$ the component of $E_{n}$, 
which 
is the exceptional divisor corresponding to the blow-up $\pi_{n}$.

Let $\left[x_{n}, y_{n}\right]$ be a homogeneous coordinate system on $B_{n}$ 
such that the affine chart $A_{n}=\left\{ y_{n}=1 \right\}$ contains the
intersection points of $B_{n}$ with the other branches of $H_{n}$. Let $  
V_{n}=\left\{\left(\lambda_{n}, x_{n}\right)\right\}, \lambda_{n} \in 
\mathbb{C}, x_{n} \in A_{n}$ be a chart of $X_{n}$ such that we have $B_{n} 
\cap 
V_{n}=A_{n}$ and $B_{n} \cap V_{n}=\left\{\lambda_{n}^{m_{n}}=0\right\}$ with 
$m_{n}$  the multiplicity of $B_{n}$ in $X_{n}$.

Since $f(x,y)$ is totally real by hypothesis, we can choose the  blow-ups 
$\pi_{n}: X_{n} 
\rightarrow X_{n-1}$ for $1 \leq n <N$ having as center a real point of 
$X_{n-1}$. So we can 
also choose the coordinates $\left[x_{n}, y_{n}\right]$ and 
$\left(x_{n}, \lambda_{n}\right)$ to be real at each step of the resolution 
process. The function
\[
f_{N-1}=f \circ \pi_{1} \circ \cdots \circ \pi_{N-1}
\]
is written in the chart $V_{N-1}$ like
\[
f_{N-1}\left(x_{N-1}, \lambda_{N-1}\right)=\lambda_{N-1}^{m_{N-1}} 
\tilde{f}_{N-1}\left(x_{N-1}, \lambda_{N-1}\right) .
\]
We decompose $\tilde{f}_{N-1}=h+g$ with $h$ the initial homogeneous part. 
Assume that $B_M$,  with $1 \leq M \leq N-2$, is the component of $E_{N-1}$ 
that 
intersects $B_{N-1}$. Then, $h$ is of 
degree $m_{N-1}-m_{M}$. So the set
\[
\left\{\left(x_{N-1}, \lambda_{N-1}\right) \in V_{N-1} | h\left(x_{N-1}, 
\lambda_{N-1}\right)=0\right\}
\]
is the union of the tangents to the non-exceptional branches of $H_{N-1}$ at 
the points of  $B_{N-1} \cap H_{N-1}$. For $t_{N-1} \in \mathbb{R}$ we put
\[
\begin{aligned}
	\bar{f}_{N-1}\left(x_{N-1}, \lambda_{N-1}, t_{N-1}\right) = \quad & 
	h\left(x_{N-1}, 
	\lambda_{N-1}-t_{N-1}\right) \\
	&  \times\left[1+t_{N-1} \cdot \prod_{i=1}^l \left(x_{N-1}-x_{i}\right) 
	\cdot 
	\exp \left(x_{N-1}\right)\right] \\
	& +g\left(x_{N-1}, \lambda_{N-1}-t_{N-1}\right)
\end{aligned}
\]
where $x_{1}, \ldots, x_{l}$ are the roots of $h\left(x_{N-1}, 0\right)=0$. For 
$t_{N-1} \in \mathbb{R}$ and $t_{N-1}\neq 0$ small enough, the equation
\[
\bar{f}_{N-1}\left(x_{N-1}, 0, t_{N-1}\right)=0
\]
has exactly $m_{N-1}-m_{M}$ distinct roots on 
$B_{N-1} \cap V_{N-1}$  (and not in $B_{M}$) and
\[
\bar{f}_{N-1}\left(\infty, 0, t_{N-1}\right)=\infty \quad \text { at the point 
} 
\quad \infty \in B_{N-1} \setminus V_{N-1}.
\]
We can see in Fig.~\ref{fig:chartV} the branches of $H_{N-1}$ that cross 
$B_{N-1}$ and $B_{M}$.
\begin{figure}
	\tiny
	\centering \resizebox{0.9\textwidth}{!}{\centering 
		\includegraphics*{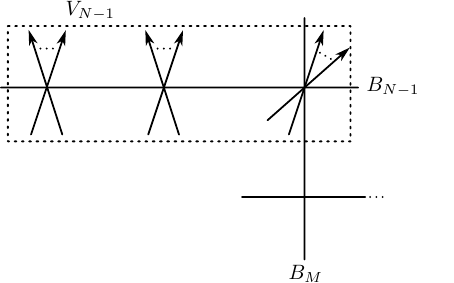}}
	\caption{$H_{N-1}$ near $B_{N-1}$.}
	\label{fig:chartV}
\end{figure}
For $t_{N-1}$ real, non-zero and small, Fig.~\ref{fig:perturbedHN} is the 
actual 
real drawing of
\[
H_{N-1, t_{N-1}} \cap V_{N-1} 
=\left\{\left(x_{N-1}, \lambda_{N-1}\right) \in V_{N-1} | 
\lambda_{N-1}^{m_{N-1}} \bar{f}_{N-1}\left(x_{N-1}, \lambda_{N-1}, 
t_{N-1}\right)=0\right\}
\]

\begin{figure}
	\tiny
	\centering \resizebox{0.8\textwidth}{!}{\centering 
		\includegraphics*{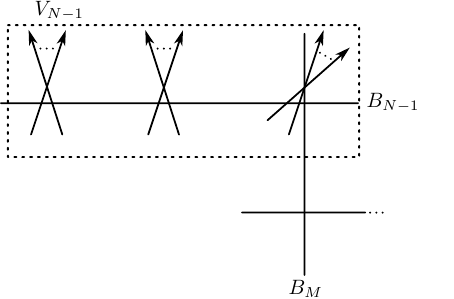}}
	\caption{We can see $H_{N-1, t_{N-1}}$ in the chart $V_{N-1}$ 
	}
	\label{fig:perturbedHN}
\end{figure}

Neighbouring points on the $B_{N-1}$ branch are translated out and next to 
$B_{N-1}$. So $H_{N-1, t_{N-1}}$ meets $B_{N-1}$ in normal crossings. The 
presence of the factor $\lambda_{N-1}^{m_{N}-1}$ in $f_{N-1, 
	t_{N-1}}$ allows us to push the function $f_{N-1, i_{N-1}}$ to a function 
$f\left(x, y, 
t_{N-1}\right)$ on 
$\mathbb{C}^{2}$. We observe that by construction,
\[
f(x, y; 0)=f(x, y)
\]
and for $t_{N-1} \neq 0, t_{N-1}$ real and small enough, the hypersurface
\[
H_{N-1}\left(t_{N-1}\right)=\left\{(x, y) \in\mathbb{C}^2 | f\left(x, y, 
t_{N-1}\right)=0\right\}=\pi_{1} \circ \cdots \circ \pi_{N-1}\left(H_{N-1, 
	t_{N-1}}\right)
\]
has in  $B_{\varepsilon}$ the singularities:
\begin{enumerate}[label= \alph*)]
	\item  an isolated singularity in $0  \in B_{\varepsilon}$, 
	\item  if  $N_{i}^{N-1}\left(t_{N-1}\right), 1 \leq i 
	\leq l_{N-1}$ are the translates of neighbouring points on $B_{N-1}$; then 
	$H_{N-1} (t_{N-1})$ has at points 
	\[
	w_{i}^{N-1}\left(t_{N-1}\right)=\pi_{1} 
	\circ \cdots \circ \pi_{N-1}\left(N_{i}^{N-1}\left(t_{N-1}\right)\right)
	\] 
	isolated ordinary singularities. That is, transversal intersections of 
	smooth branches.
\end{enumerate}
The blow-ups $\pi_{1}, \ldots, \pi_{N-2}$ resolve the singularity at $0 \in 
B_{\varepsilon}$ of $H_{N-1}\left(t_{N-1}\right)$ so that the branches of 
$H_{N-1, t_{N-1}}=\left(\pi_{1} \circ \cdots \circ \pi_{N-2}\right)^{-1}$ - 
$\left(H_{N-1}\left(t_{N-1}\right)\right)$ meet each other transversely.  We 
can therefore start the construction again and obtain a 
deformation of $f\left(x, y, t_{N-1}\right)$.
\[
f\left(x, y, t_{N-1}, t_{N-2}\right), t_{N-2} \in \mathbb{R}
\]
such that the hypersurface $\left\{f\left(x, y, t_{N-1}, 
t_{N-2}\right)=0\right\} 
\subset\mathbb{C}^2$ has an isolated singularity in $0 \in B_{\varepsilon}$. 
In addition, one must take care so that
\[
f\left(x, y, t_{N-1}\right) \equiv f\left(x, y, t_{N-1}, t_{N-2}\right)
\]
at a high enough order at the points $w_{i}^{N-1}\left(t_{N-1}\right), 1 \leq i 
\leq l_{N-1}$. This operation can be iterated $N-1$ times to obtain a 
family of functions on $\mathbb{C}^{2}$.
\[
f\left(x, y, t_{N-1}, t_{N-2}, \ldots, t_{1}\right),\left(t_{N-1}, \ldots, 
t_{1}\right) \in \mathbb{R}^{N-1}
\]
We finally define
\[
\begin{aligned}
	\bar{f}(x, y, t) & =f(x, y, t, t, \ldots, t), \quad t \in \mathbb{R} \\
	\bar{H}_{t} & =\left\{x, y \in\mathbb{C}^2 | \bar{f}(x, y, 
	t)=0\right\}
\end{aligned}
\]

From this construction, the following lemma follows:

\begin{lemma}\label{lem:pre_divide}
	For $t \in \mathbb{R}, t \neq 0$ and small enough, the curve 
	$\bar{C}_{t}=\bar{H}_{t} \cap D_{\varepsilon}$ is an immersion 
	$\bar{\alpha}: J 
	\rightarrow D_{\varepsilon}$ such that
	\begin{enumerate}[label= \alph*)]
		\item The number of components of $J$ equals the number $r$ of branches 
		of $f$,
		
		\item \label{it:connected} $\bar{\alpha}(\partial J) \subset \partial 
		D_{\varepsilon}$, and 
		the set 
		$ \bar{\alpha}(J) 
		\subset D_{\varepsilon}, \alpha(J)$ is connected.
		
		\item \label{it:ordinary} $\bar{\alpha}(J)=\bar{C}_{t}$ has as as its 
		only singularities, 
		ordinary 
		singularities.
	\end{enumerate}
	In particular, \ref{it:connected} and \ref{it:ordinary} say that 
	$\bar{\alpha}$ is a 
	connected pre-divide (recall Definition~\ref{def:pre_divide}).
\end{lemma}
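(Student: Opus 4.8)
The plan is to derive the three assertions directly from the iterated construction preceding the statement, which already exhibits $\bar f(x,y,t)=f(x,y,t,\dots,t)$ as a real polynomial deformation with $\bar f(x,y,0)=f(x,y)$. First I would record the ``ambient'' facts: for $|t|$ small, $\bar H_t$ is $C^{0}$-close to $\{f=0\}$ on $D_{\varepsilon}$, and since $\{f=0\}$ is transverse to $\partial D_{\varepsilon}$ for $\varepsilon$ a Milnor radius, so is $\bar H_t$. Hence $\bar C_t=\bar H_t\cap D_{\varepsilon}$ is a compact one-dimensional set properly contained in $D_{\varepsilon}$ with $\bar\alpha(\partial J)\subset\partial D_{\varepsilon}$ and $\bar\alpha^{-1}(\partial D_{\varepsilon})=\partial J$, and $\bar C_t\cap\partial D_{\varepsilon}$ has the same cardinality $2r$ as $\{f=0\}\cap\partial D_{\varepsilon}$ (each of the $r$ totally real branches has real locus an arc through the origin, meeting $\partial D_{\varepsilon}$ in two points).

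For assertion \ref{it:ordinary} I would localize. Away from the origin the only singular points of $\bar C_t$ are the transversal crossings $w_i^{n}(t)$ produced at the successive stages $n=N-1,N-2,\dots,1$; at each later stage the deforming function is required to agree with the previous one to an order exceeding the finite-determinacy order of an ordinary multiple point at each of these finitely many (continuously moving) points, so the crossings persist as ordinary crossings, and the genericity built into the translating factors (the product $\prod(x_{N-1}-x_i)$ and the exponential in $\bar f_{N-1}$, and their analogues at lower levels) keeps them pairwise distinct for small $t\neq 0$. Near the origin I would argue that after the $n$-th iteration the local singularity of the blown-down curve at $0$ is resolved by the blow-ups $\pi_{1}\circ\cdots\circ\pi_{N-1-n}$ alone; after all $N-1$ iterations it is therefore resolved by the empty sequence of blow-ups, i.e.\ the branches of $\bar C_t$ meeting a small neighbourhood of $0$ are smooth and pairwise transverse, again only ordinary (possibly multiple) crossings. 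Since a real algebraic curve has finitely many singular points, this shows $\bar\alpha$ is an immersion of a compact one-manifold with only ordinary multiple points, which is \ref{it:ordinary}; combined with the connectedness proved next it gives the ``connected pre-divide'' conclusion.

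For assertion (a) and the first half of \ref{it:connected} I would note that every modification in the construction is a small translation of the (totally real, hence arc-like) strict transform off an exceptional component followed by a blow-down; translations neither split nor merge branches and keep each branch a single smooth embedded arc at that level, and blow-down is a diffeomorphism away from the exceptional locus. Hence $\bar C_t$ is the union of $r$ immersed arcs, one for each branch $f_i$, so $J$ has exactly $r$ components and no circle components, the arc endpoints being pinned to $\partial D_{\varepsilon}$ by transversality. Connectedness of $\bar\alpha(J)$ I would obtain from the resolution picture: the strict transforms of the $r$ branches all meet the connected exceptional configuration, which $\pi_{1}\circ\cdots\circ\pi_{N-1}$ collapses to the origin, so after the perturbations and blow-downs the $r$ branches of $\bar C_t$ accumulate to a single connected cluster of transversal crossings near $0$; together with the connectedness of $\{f=0\}\cap D_{\varepsilon}$ this forces $\bar C_t$ to be connected, shrinking $\varepsilon$ after $t$ if necessary.

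I expect the main obstacle to be the bookkeeping in \ref{it:ordinary}: one must check, through $N-1$ successive deformations each of which only constrains finitely many jets at finitely many moving base points, that no singularity worse than an ordinary multiple point is ever produced — in particular that the successive ``peelings'' of blow-ups near the origin are compatible with the high-order agreement imposed at the crossings away from the origin — and that the crossings never collide for small $t$. Pinning down the order of quantifiers between the smallness of $t$ and the radius $\varepsilon$ in the connectedness argument for \ref{it:connected} is a secondary but genuine point; the rest is a faithful transcription of the construction.
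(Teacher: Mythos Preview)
The paper gives no separate proof of this lemma: it simply writes ``From this construction, the following lemma follows'' and states the result. Your proposal is a correct and faithful elaboration of exactly the argument the paper leaves implicit --- you extract (a), \ref{it:connected}, and \ref{it:ordinary} directly from the iterated translate-and-blow-down procedure, using that each iteration lowers by one the number of blow-ups needed to make the strict transform smooth and transversal at $0$, and that the high-order agreement imposed at the moving crossing points $w_i^n(t)$ preserves their ordinary character throughout.
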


The number of double points 
appearing near a multiple point with  $r_{i}$ branches of $\bar{C}_{t}$, is
$$
\frac{1}{2} r_{i}\left(r_{i}-1\right).
$$
So the number of double points of any divide $\alpha : J \rightarrow 
D_{\varepsilon}$ next to $\bar{\alpha}: J \rightarrow D_{\varepsilon}$ is
\begin{equation}\label{eq:double_points}
	\frac{1}{2} \sum_{1 \leq i \leq l} r_{i}\left(r_{i}-1\right).
\end{equation}
In the next theorem we complete the proof of 
Theorem~\ref{thm:existence_divides}, by 
proving that $\bar{\alpha}$ can be polynomially perturbed into a divide and we 
also show that the $\delta$ invariant of the singularity coincides with the 
number of double points of any divide coming from a totally 
real singularity is  we only need to prove.

\begin{lemma} \label{lem:generic_perturbation}
	Let $\alpha: J \rightarrow D_{\varepsilon}$ be a divide resulting from a 
	small
	perturbation of $\bar{\alpha}: J \rightarrow D_{\varepsilon}$. Then there 
	exists a real polynomial $M(x, y, t)$ so that the function
	\[
	f(x, y; t)=\bar{f}(x, y, t)+M(x, y, t), \quad t \in \mathbb{R}
	\]
	is a deformation of $f(x, y)$, such that its divide $C_{t} \subset 
	D_{\varepsilon}$ has the configuration of the divide defined by $\alpha$ 
	and 
	$C_{t} \subset  D_{\varepsilon}$ is close to the image of $\bar{\alpha}$.
	
	Moreover, if
	$w_{i}(t), 1 
	\leq i \leq l$ are the crossing points (ordinary singularities) of 
	$\bar{C}_{t}$, and $r_{i}$ is
	the number of branches of $\bar{C}_{t}$ at points $w_{i}(t)$. 
	Then we have
	\[
	\sum_{1 \leq i \leq l} r_{i}\left(r_{i}-1\right)-r+1=\mu.
	\]
	Equivalently, the number of double points of the divide $D$ coincides with 
	the $\delta$ invariant of the singularity and
	\[
	D = \delta = \frac{1}{2} \sum_{1 \leq i \leq l} r_{i}\left(r_{i}-1\right)
	\]
\end{lemma}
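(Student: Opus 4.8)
The plan is to split the statement of \Cref{lem:generic_perturbation} into two essentially independent parts: first, the existence of a polynomial perturbation $M(x,y,t)$ realizing a prescribed divide configuration near $\bar\alpha$; and second, the numerical identity relating the number of double points to $\mu$ and $\delta$.

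For the first part, I would argue as follows. By \Cref{lem:pre_divide} the curve $\bar C_t$ is a connected pre-divide whose only singularities are ordinary multiple points $w_1(t),\dots,w_l(t)$ with $r_i$ local branches at $w_i(t)$, each branch smooth and the branches pairwise transverse. Near each $w_i(t)$ the local equation of $\bar f(x,y,t)$ is, up to higher-order terms and a unit, a product of $r_i$ distinct linear forms; a generic small perturbation of the coefficients of these linear forms resolves the $r_i$-fold point into $\tfrac12 r_i(r_i-1)$ ordinary double points, and this is exactly the local picture of the divide $\alpha$ chosen close to $\bar\alpha$. The key point is that this local perturbation can be taken \emph{polynomial}: one chooses $M(x,y,t)$ to be a polynomial of controlled degree whose jet at each $w_i(t)$ produces the required splitting of the linear factors, and which is small (together with its derivatives up to the relevant order) on $D_\varepsilon$; the argument from \Cref{thm:existence_divides}'s construction already shows that such finitely many local prescriptions on jets can be matched by a single polynomial, since prescribing a finite Taylor jet at finitely many points is a finite linear condition. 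Because the perturbation is small in the $C^k$ sense for $k$ large enough, $f(x,y;t)=\bar f(x,y,t)+M(x,y,t)$ remains a deformation of $f(x,y)$ with an isolated singularity at the origin, still totally real, and its real curve $C_t\subset D_\varepsilon$ is $C^0$-close to (and isotopic to, rel $\partial D_\varepsilon$) the image of $\bar\alpha$ but now with only ordinary double points — i.e.\ it is the divide defined by $\alpha$.

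For the numerical identity, I would invoke the conservation of the Milnor number (or equivalently of the $\delta$-invariant) under the deformation. Since $f(x,y;t)$ is a $\mu$-constant-type deformation only away from $t=0$ — at $t=0$ it degenerates to $f$, whose single singular point at the origin has Milnor number $\mu$ — the total Milnor number of the singular fibre is conserved: the origin of $f$ ``splits'' into the isolated singular points of $\bar H_t$ (resp.\ the nodes of $C_t$), and $\mu = \sum_{\text{sing pts}} \mu_{\text{loc}}$. An ordinary multiple point with $r_i$ branches has $\delta$-invariant $\tfrac12 r_i(r_i-1)$ and Milnor number $(r_i-1)^2 + \text{(nodes from splitting)}$, but it is cleaner to use $\delta$: conservation of $\delta$ under deformation of the total space gives $\delta(f) = \sum_i \delta(w_i(t)) = \tfrac12\sum_i r_i(r_i-1)$, and then the classical formula $\mu = 2\delta - r + 1$ (with $r$ the number of branches of $f$, valid for plane curves) yields $\sum_i r_i(r_i-1) - r + 1 = \mu$. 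Passing to the further small perturbation to an honest divide only replaces each $w_i$ by $\tfrac12 r_i(r_i-1)$ nodes without changing the count, so the number $D$ of double points of the divide equals $\delta = \tfrac12\sum_i r_i(r_i-1)$.

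The main obstacle I anticipate is the \emph{polynomiality} and \emph{smallness} of $M$ simultaneously with the control ``$f(x,y;t)\equiv \bar f(x,y,t)$ at high enough order at the $w_i^{N-1}(t_{N-1})$'' that the iterative construction in \Cref{thm:existence_divides} requires — one must check that perturbing the already-deformed $\bar f$ to split its multiple points does not destroy the agreement-to-high-order with the functions produced at earlier stages of the blow-up/blow-down iteration, nor the isolatedness of the singularity at the origin. This is handled by choosing $M$ to vanish to sufficiently high order at $0$ and at all the relevant base points, which is again a finite open/linear condition compatible with the smallness requirement; but making this bookkeeping precise (which orders suffice, and that a single polynomial realizes all constraints) is the technical heart of the argument. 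The conservation-of-$\delta$ step, by contrast, is standard and causes no difficulty once the deformation is known to keep the singularity isolated.
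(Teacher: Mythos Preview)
Your approach to the first part (constructing the polynomial $M$) is essentially the same as the paper's: the paper makes your outline explicit by writing, at each multiple point $w_1(t)$ with tangent lines $l_i(x,y,t)=0$, a local perturbation $L_1(x,y,t)=\prod_i t(l_i(x,y,t)-ta_i)$, and then multiplying by an interpolating polynomial $P_1(x,y;t)$ that is $\equiv 1$ (to high order) at the newly created double points near $w_1$ and $\equiv 0$ (to high order) at the remaining multiple points $w_2,\dots,w_e$. This is precisely the ``finite jet at finitely many points'' mechanism you invoke. One small correction: your worry about compatibility with ``earlier stages of the blow-up/blow-down iteration'' is misplaced here, since $\bar f$ is already the output of that iteration; the only bookkeeping needed is that perturbing near one multiple point does not disturb the others, which is exactly what the factor $P_1$ accomplishes.

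For the numerical identity your route is \emph{genuinely different} from the paper's. You argue via conservation of the $\delta$-invariant in the flat family (so $\delta(f)=\sum_i \binom{r_i}{2}$) together with Milnor's formula $\mu=2\delta-r+1$. The paper instead argues combinatorially on the divide itself: writing $D$ for the number of double points and $b$ for the number of bounded regions, it uses (i) that every complex critical point of $f(x,y;t)$ is real and is either a saddle at a node or an extremum in a bounded region, giving $b+D=\mu$, and (ii) an Euler-characteristic count on the planar graph (the union of the closed bounded regions is a disk) giving $b-D=1-r$; subtracting yields $2D=\mu+r-1$, whence $D=\delta$ by Milnor's formula. Your argument is cleaner and invokes a standard algebro-geometric fact; the paper's argument is more elementary but implicitly uses that the real deformation has captured \emph{all} $\mu$ critical points, i.e.\ that it really is a real morsification. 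Both are correct, and each illuminates a different aspect: yours shows the identity is forced by deformation theory alone, while the paper's shows it can be read off purely from the planar combinatorics of the divide.
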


\begin{proof}
	Let $\left(x_{1}(t), y_{1}(t)\right), \ldots,\left(x_{e}(t), 
	y_{e}(t)\right) \in D_{\varepsilon}$ the crossing points (double or 
	multiple) 
	of 
	of $\bar{C}_{t}$. Assume that $w_{1}(t)=\left(x_{1}(t), y_{1}(t)\right)$ 
	is a multiple crossing point since if there are only double points, the 
	lemma 
	is proven because our deformation $\bar{C}_{t}$ is given by a polynomial.
	
	Let $r_{1}$ be the number of branches at the point $w_{1}(t)$. Let, for $1 
	\leq i 
	\leq r_{1}$,
	$$
	l_{i}(x, y, t)=0
	$$
	the equations of the tangents at $w_{1}(t)$ to the branches of 
	$\bar{C}_{t}$. 
	Any 
	configuration of a divide which is close to $\bar{C}_{t}$ can be described 
	near 
	$w_{1}(t)$ by an equation
	\[
	L_{1}(x, y, t)=\prod_{1 \leq i \leq r_{1}} t\left(l_{i}(x, y, t)-t 
	a_{i}\right)=0
	\]
	where $a_{i} \in \mathbb{R}$. Thus, the 
	equation
	\begin{equation}\label{eq:deformation}
		\bar{f}(x, y, t)+L_{1}(x, y, t)=0 
	\end{equation}
	gives for $t \neq 0$ and small enough the same configuration. Let 
	let $w_{1,1}(t), \ldots, w_{1, s}(t)$ be the $s=1 / 2 
	r_{1}\left(r_{1}-1\right)$ 
	double points that appear near $w_{1}(t)$ for the equation 
	eq.~\ref{eq:deformation}. 
	Let 
	$P_{1}(x, y; t)$ be a polynomial such that for a high  enough order we have
	
	\[
	\begin{aligned}
		P_{1}\left(w_{1, j}(t) ; t\right) \equiv 1, & & 1 \leq j \leq s, \\\\
		P_{1}\left(x_{i}(t), y_{i}(t) ; t\right) \equiv 0, & & 2 \leq i \leq e.
	\end{aligned}
	\]
	Then
	\[
	\bar{f}_{(1)}(x, y, t)=\bar{f}(x, y, t)+P_{1}(x, y ; t) L_{1}(x, y, t)
	\]
	is a deformation of $f(x, y)$ such that the curve
	\[
	C_{(1)}(t)=\left\{(x, y) \in D_{\varepsilon} | \tilde{f}_{(1)}(x, y, 
	t)=0\right\}
	\]
	has near $w_{1}(t)$ the desired configuration and $\left(x_{i}(t), 
	y_{i}(t)\right), 2 \leq i \leq e$, are still multiple points of 
	$\bar{C}_{(1)}(t)$. Repeat the same construction for another multiple 
	crossing 
	point until there are no more. This proves the first part of the statement.
	
	\begin{figure}
		\centering \includegraphics*[scale=0.6]{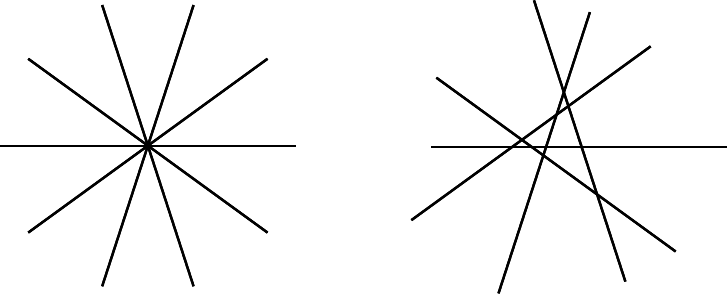}
		\caption{On the left $5$ local branches and on the right the perturbed 
			branches that lead to $10$ double points.}
		\label{fig:perturb_branches}
	\end{figure}
	
	The second part follows from the following observation: if one has an 
	ordinary real singularity consisting of $r_i$ branches, after a small 
	perturbation of these branches, there appear $\frac{1}{2} r_i(r_i-1)$ 
	double points (Fig.~\ref{fig:perturb_branches}). Let's denote
	\[
	D = \frac{1}{2} \sum_{1 \leq i \leq l} r_{i}\left(r_{i}-1\right)
	\]
	the number of double points of the divide. We first recall  Milnor's 
	formula  \cite{Milnor} relating the 
	Milnor number 
	$\mu$, the delta invariant $\delta$ and the number of branches	$\mu =  2 
	\delta -r + 1$. Let $b$ be the number of bounded regions of the divide. 
	Since on each bounded region there lies a critical point of $f(x,y;t)$ and 
	all the other critical points are the nodes of the divide, we have
	\begin{equation} \label{eq:bpd}
		b + D = \mu.
	\end{equation}
	The connectedness of the divide implies that the union of the closure of 
	the bounded regions is a topological disk, hence a direct calculation of 
	the Euler characteristic yields
	\begin{equation}\label{eq:bmd}
		b - (2 D - r) + D = 1 \quad \Rightarrow \quad b - D = 1 - r
	\end{equation}
	Taking eq.~\ref{eq:bpd} minus eq.~\ref{eq:bmd} yields $2D = \mu - 1 + r$ 
	which, 
	after Milnor's formula implies $\delta = D$.
\end{proof}

\begin{example} \label{ex:divide_translate}
	This is an example of a divide using a similar technique as the one 
	explained in Theorem~\ref{thm:existence_divides}. Note that the 
	perturbations 
	chosen here are not exactly the same.
	
	Start with the isolated plane curve singularity defined by the polynomial 
	$f(x,y)= -x^{8} - x^{7} - 3  x^{5} y + y^{3}$.
	
	\begin{figure}
		\centering \includegraphics*[scale=0.6]{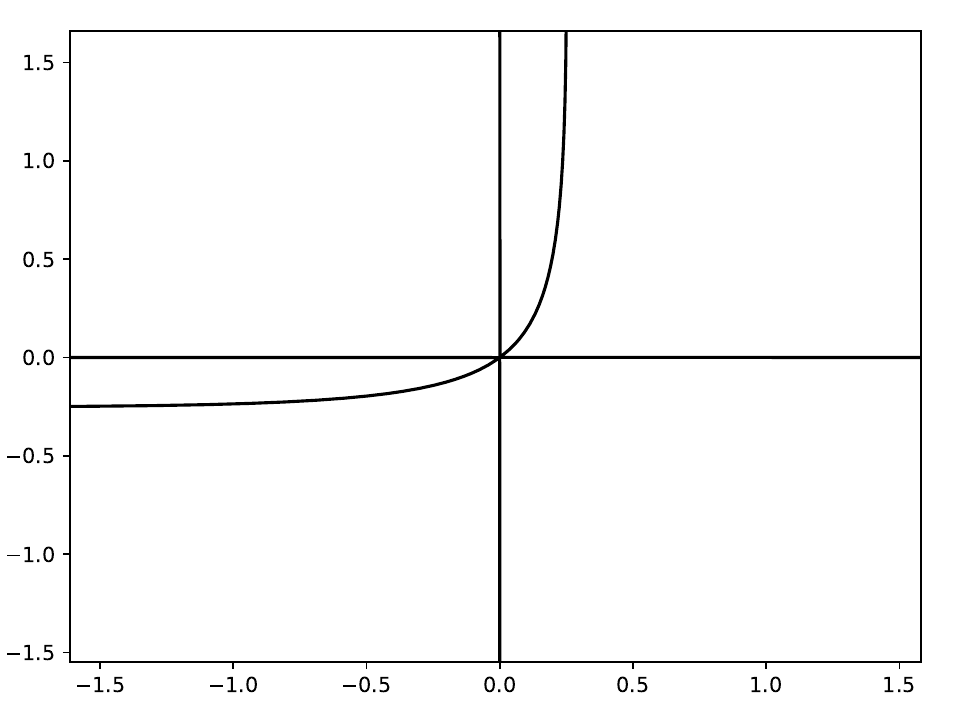}
		\caption{Situation after ther first $4$ blow ups of the minimal 
			embedded resolution.}
		\label{fig:last_blow_up}
	\end{figure}
	
	After four blow ups we arrive at the situation of 
	Fig.~\ref{fig:last_blow_up}. 
	In a 
	local chart, near the intersection point of the divisors $E_4$ and $E_2$, 
	the total transform is defined by the expression
	\[
	f_4(x,y)=-{\left(x^{2} y^{2} + 3 \, x y + x - y\right)} x^{6} y^{14}
	\]
	We can perturb the strict transform  by considering the polyomial :
	\[
	f_{4,t}(x,y)=-{\left(x^{2} y^{2} + 3 \, x y + t + x - y\right)} x^{6} y^{14}
	\]
	which, for $t>0$ defines the picture of 
	Fig.~\ref{fig:last_blow_up_perturbed}.
	\begin{figure}
		\centering \includegraphics*[scale=0.6]{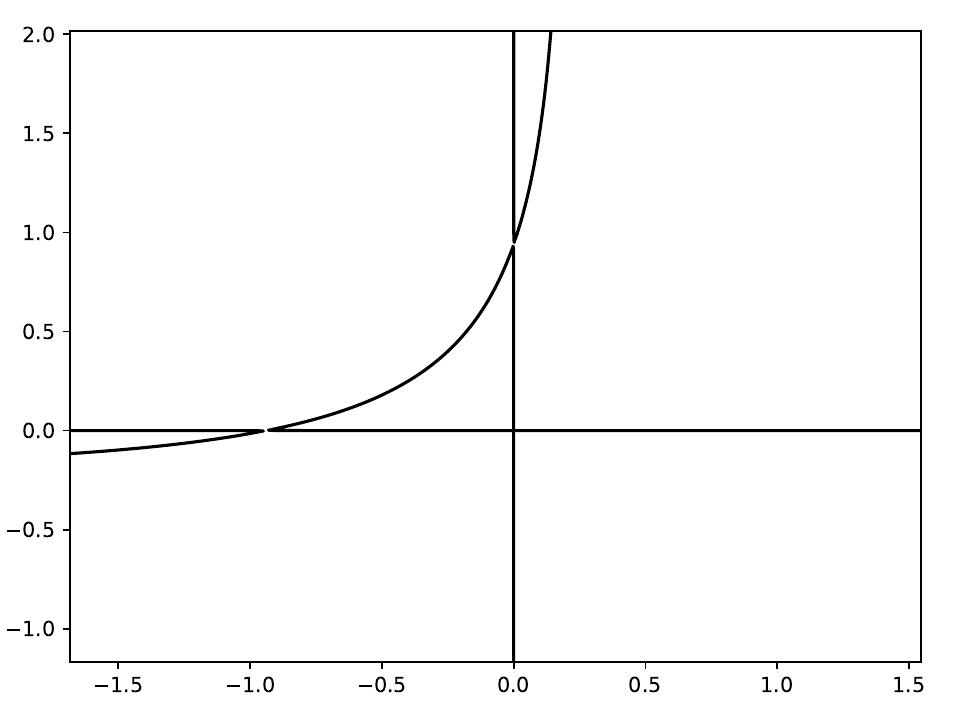}
		\caption{Situation after perturbing the strict transform near $E_4$ and 
			$E_2$.}
		\label{fig:last_blow_up_perturbed}
	\end{figure}
	Now, 
	we observe that the contraction of $E_4$, yields the equation
	\[
	\hat{f}_{4,t}(x,y)=-{\left(x^{2} y + t y + 3 \, x y - y^{2} + x\right)} 
	x^{6} y^{7}
	\]
	which, for $t>0$ describes the situation depicted in 
	Fig.~\ref{fig:contraction_perturbed}
	\begin{figure}
		\centering \includegraphics*[scale=0.6]{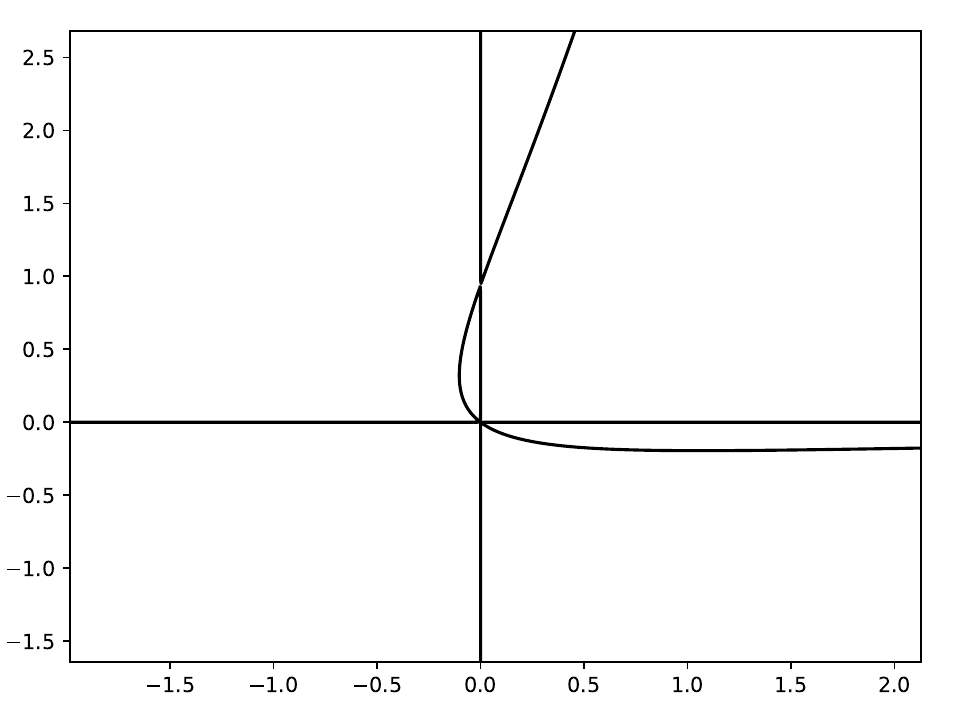}
		\caption{The divisor $E_4$ is contracted and the strict transform is 
			only perturbed by the previously defined perturbation. We see that 
			the strict transform and the divisor do not meet in normal 
			crossings.}
		\label{fig:contraction_perturbed}
	\end{figure}
	Again, we perturb the strict transform in order to achive normal crossings 
	by considering the following 
	polynomial
	\[
	f_{3,t}(x,y)= -{\left(x^{2} y + t y + 3 \, x y - y^{2} + t + x\right)} 
	x^{6} y^{7}.
	\]
	We arrive to the situation of 
	\begin{figure}
		\centering \includegraphics*[scale=0.6]{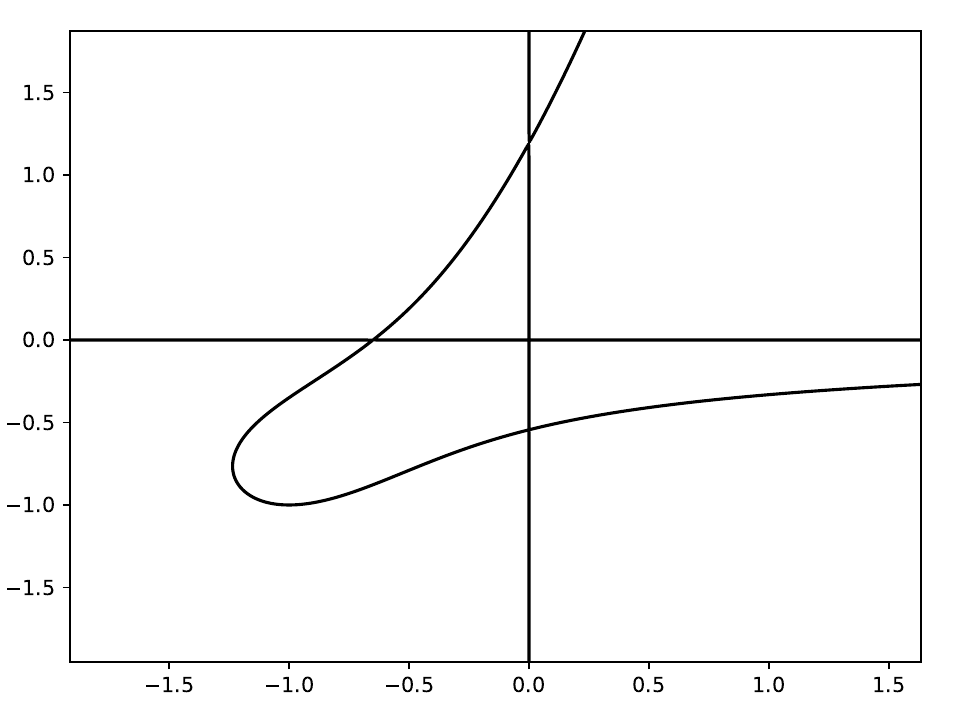}
		\caption{After a small perturbation of the strict transform we arrive 
			at a normal crossings situation.}
		\label{fig:perturbed_transversal}
	\end{figure}
	To avoid cumbersome notation, we have used the same parameter $t$ for the 
	perturbation. One can think of this as two steps: first one considers a 
	perturbation over a two-parameter family; then one considers the 
	perturbation over a disk contain in this family. By the choice of our 
	pertubations, we are considering the pertubation over the diagonal of this 
	two-parameter family. Note that Theorem~\ref{thm:existence_divides} 
	proposes, a 
	priori different pertubations to ensure that they behave well with respect 
	to the blow-down process. In our case, we verify this at each step even 
	though we make different choices that the ones proposed by the theorem.
	In the above equation, $y=0$ defines locally $E_3$ and $x=0$ defines $E_2$. 
	After the contraction of $E_3$ we arrive to the polynomial
	\[
	\hat{f}_{3,t}(x,y)=-{\left(t y^{2} - y^{3} + x^{2} + t y + 3 \, x y + 
		x\right)} x^{6}
	\]
	which, for $t>0$ defines the situation depicted in 
	Fig.~\ref{fig:manual_divide_cubic_perturbed}.
	\begin{figure}
		\centering 
		\includegraphics*[scale=0.6]{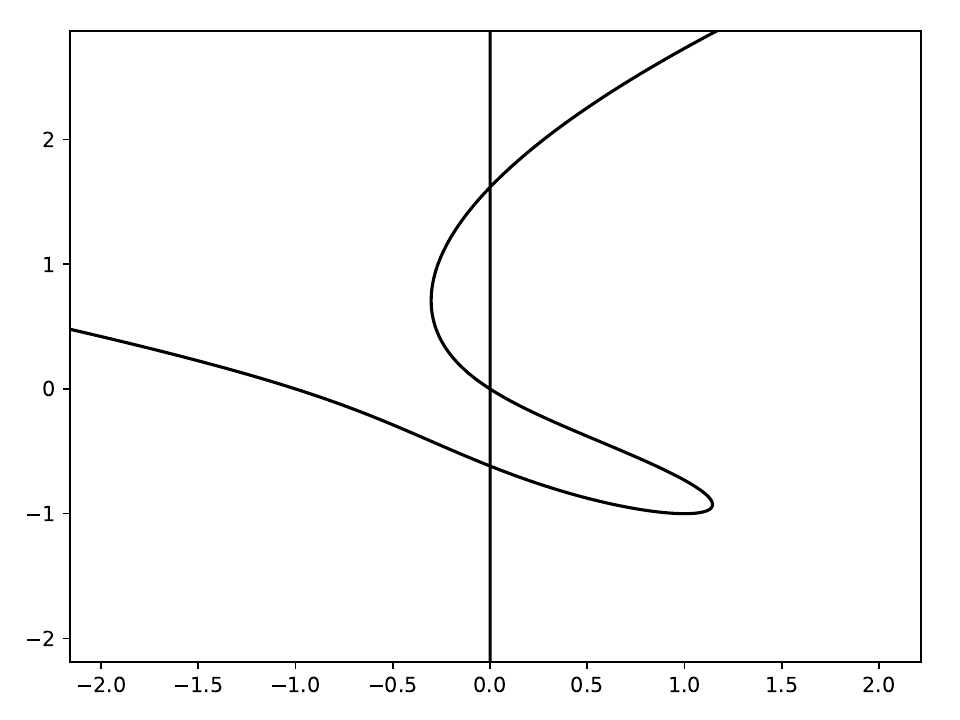}
		\caption{The divisor $E_3$ is contracted and the strict transform is 
			perturbed by the previously defined perturbation. In this case we 
			automatically achieve normal crossings.}
		\label{fig:manual_divide_cubic_perturbed}
	\end{figure}
	In this case, the strict transform already intersects the 
	divisor $E_2$ transversely. So we don't perform a perturbation at this 
	stage and we directly contract $E_2$ to arrive to the equation
	\[
	\hat{f}_{2,t}(x,y)=-{\left(x^{5} + x^{4} + t x^{2} y + 3 \, x^{3} y + t x 
		y^{2} - y^{3}\right)} x^{3}.
	\]
	which describes the local situation depicted at 
	Fig.~\ref{fig:manual_divide_last_before_trans}.
	\begin{figure}
		\centering 
		\includegraphics*[scale=0.6]{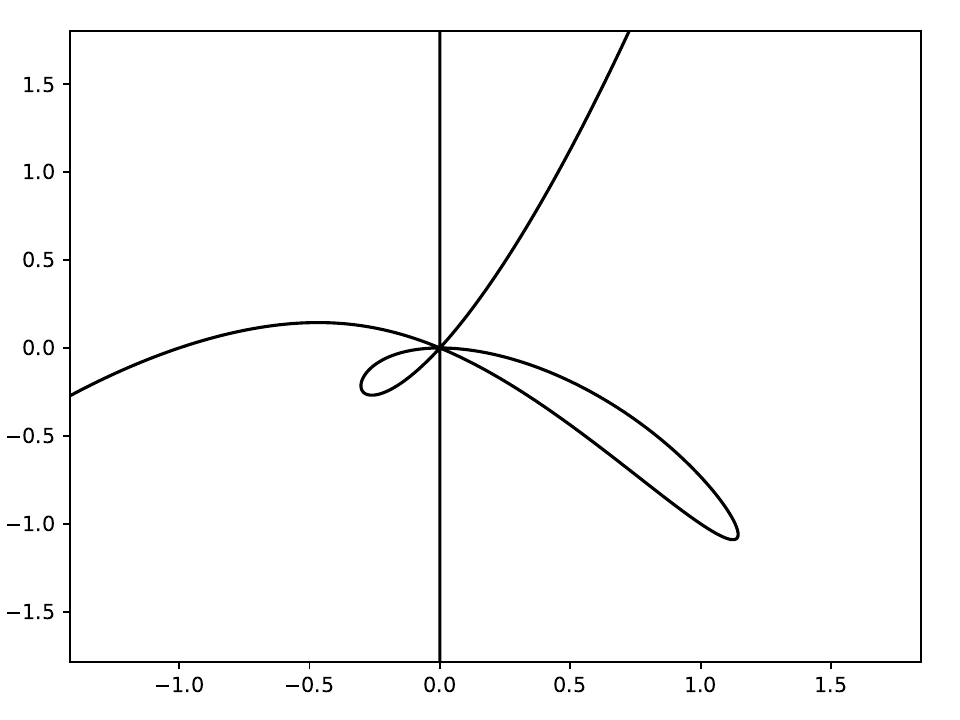}
		\caption{The divisor $E_2$ is contracted.}
		\label{fig:manual_divide_last_before_trans}
	\end{figure}
	As we see, the (perturbed) strict transform does not meet in normal 
	crossings the divisor $E_1$. So we need to perturb it a little bit. We do 
	so by considering the polynomial
	\[
	\begin{split}
		f_{1,t}(x,y)=-&\left(t^{10} + 5 \, t^{8} x + t^{8} + 10 \, t^{6} x^{2} 
		+ 4 
		\, t^{6} x + 
		10 \, t^{4} x^{3} + 3 \, t^{6} y + 6 \, t^{4} \right.\\
		& \quad x^{2} + 5 \, t^{2} x^{4} + 
		t^{5} y + 9 \, t^{4} x y + 4 \, t^{2} x^{3} + x^{5} + 2 \, t^{3} x y + 
		9 \, 
		t^{2} x^{2} y \\
		&\left. \quad + t^{3} y^{2} + x^{4} + t x^{2} y + 3 \, x^{3} y + t x 
		y^{2} 
		- y^{3}\right) x^{3}.
	\end{split}
	\]
	which results from substituting $x$ by $x+t^2$ in the strict transform of 
	the equation defined by $\hat{f}_{2,t}(x,y)$. For $t>0$ the corresponding 
	situation is depicted in Fig.~\ref{fig:manual_divide_last}.
	\begin{figure}
		\centering \includegraphics*[scale=0.6]{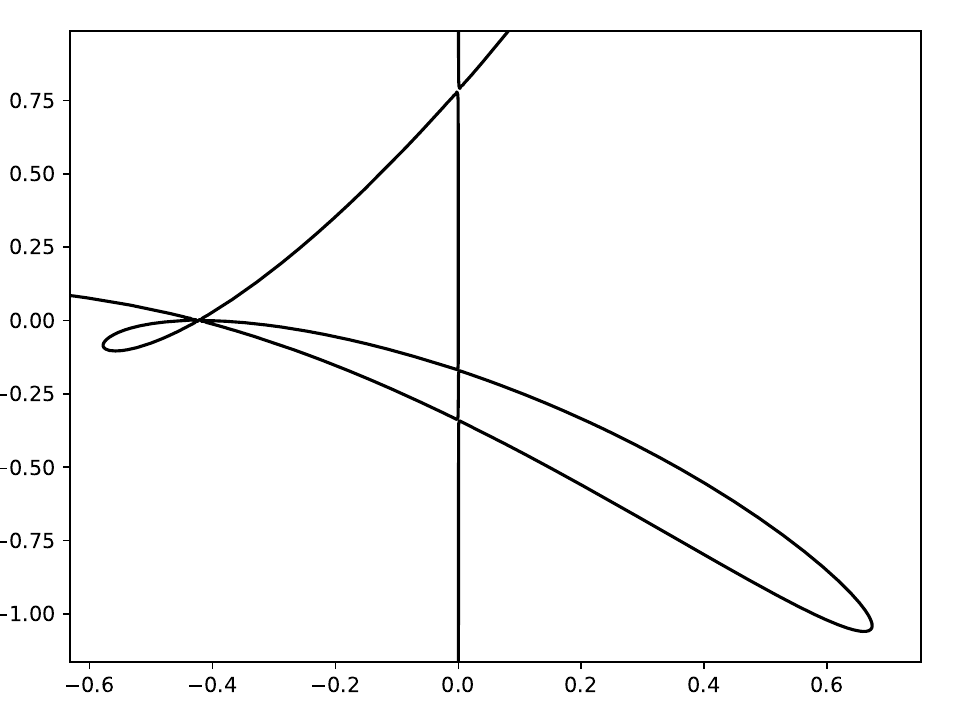}
		\caption{A small translation is performed to achive normal crossings.}
		\label{fig:manual_divide_last}
	\end{figure}
	
	We are ready to contract the divisor $E_1$ to arrive to the polynomial
	\[
	\begin{split}
		\hat{f}_{1,t}(x,y)=&-t^{10} x^{3} - 5 \, t^{8} x^{4} - t^{8} x^{3} - 10 
		\, 
		t^{6} x^{5} - 4 \, t^{6} x^{4} - 10 \, t^{4} x^{6} - 6 \, t^{4} x^{5} \\
		&- 5 \, t^{2} x^{7} - 3 \, t^{6} x^{2} y - 4 \, t^{2} x^{6} - x^{8} - 
		t^{5} 
		x^{2} y - 9 \, t^{4} x^{3} y - x^{7} - 2 \, t^{3} x^{3} y \\
		&- 9 \, t^{2} 
		x^{4} y - t x^{4} y - 3 \, x^{5} y - t^{3} x y^{2} - t x^{2} y^{2} + 
		y^{3}.
	\end{split}	
	\]
	This is already a polynomial defined in the (original) $\mathbb{C}^2$ that 
	is a 
	deformation of $f(x,y)=f_{1,0}(x,y)$ and that defines a pre-divide (recall 
	Definition~\ref{def:pre_divide}) which is depicted in 
	Fig.~\ref{fig:manual_divide_final}.
	\begin{figure}
		\centering \includegraphics*[scale=0.6]{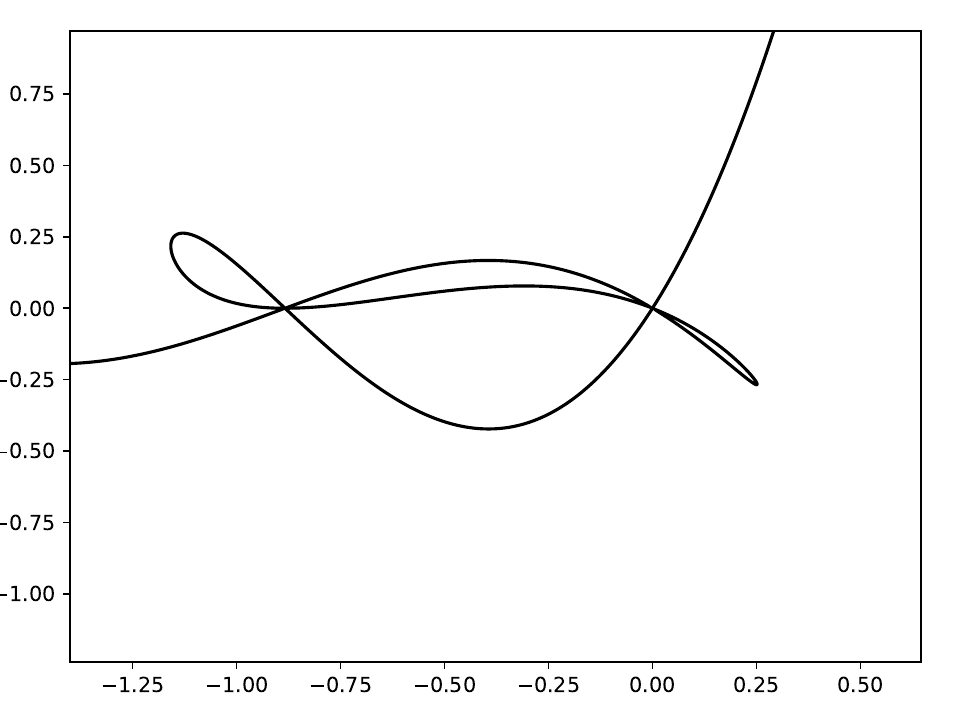}
		\caption{A small translation is performed to achive normal crossings.}
		\label{fig:manual_divide_final}
	\end{figure}	
	After performing a generic pertubation as explained in 
	Lemma~\ref{lem:generic_perturbation} we arrive at a divide like the one in 
	Fig.~\ref{fig:manual_divide_final_perturbed} which is a divide for our 
	original 
	singularity.
	\begin{figure}
		\centering 
		\includegraphics*[scale=0.6]{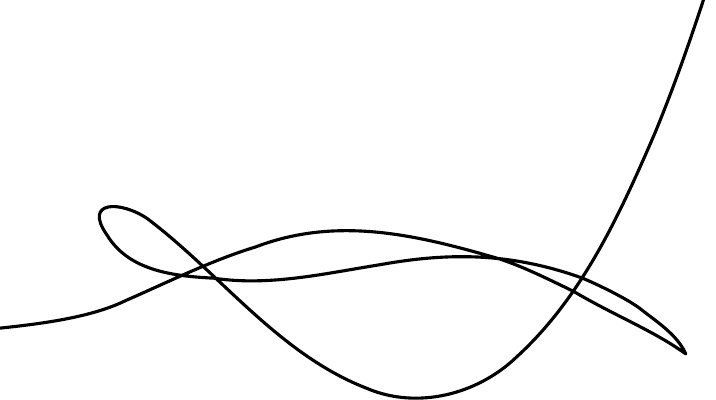}
		\caption{A small perturbation can be performed to get rid of the 
			ordinary singularities and get only double points.}
		\label{fig:manual_divide_final_perturbed}
	\end{figure}	
\end{example} 

\subsection{Divides from  Chebyshev polynomials} \label{ss:divide_chebyshev}

In this section we explain a different way to obtain divides for plane curves. 
This was envisaged by Sabir Gusein-Zade in 
\cite{GZ_inter_two,GZ_dynkin_two} and was also further explored by the first 
author.

It was observed by Ren\'e Thom that the \index{Chebyshev polynomial} Chebyshev  
polynomials $T:\mathbb{C} 
\to \mathbb{C}$ 
up to affine equivalence of functions are precisely the  polynomial mappings
from $\mathbb{C}$ to $\mathbb{C}$ with two or less critical values and with 
only quadratic
singularities \cite{Thom_equi}. The standard Chebyshev 
polynomial $T(p,z)$ is a polynomial in the variable $z$ of degree $p$ and has 
the 
following properties:
\begin{enumerate}
	\item its critical  are values $+1,-1$, 
	\item it has the symmetry $T(p,z)=(-1)^p T(p,-z)$, and 
	\item the coefficient of $z^p$ is $2^{p-1}$. 
\end{enumerate}
As a consequence of these properties we find that, for example, the  map 
$T(1,z)$ has  no critical values and the map $T(2,z)$ has only the critical 
value $-1$. Furthermore, the 
Chebyshev  polynomial $T(p,z)$ satisfies the identity
\[
T(p,\cos(x))=\cos(px),
\]
and its restriction to $[-1,1]$ is defined by
\[
T(p,t)=\cos(p\arccos(t)).
\]
Sabir Gusein-Zade \cite{GZ_inter_two,GZ_dynkin_two} constructed real 
morsifications for
real plane curve singularities with Chebyshev
polynomials. The building block for his construction 
is the real morsification for the map $f(x,y)=2^{p-1}x^p-2^{q-1}y^q$ 
given by
\[
f_s(x,y)=s^{pq}(
T(p,x/s^q)- T(q,y/s^p)), \quad s \in [0,1].
\]
In particular, it is a direct computation that for each $s \in ]0,1]$, the
function $f_s:\mathbb{C}^2 \to \mathbb{C}$ has $\mu_f=(p-1)(q-1)$ quadratic 
singularities all at points with real coordinates, its critical values  
are contained in $\{-2s^{pq},0,2s^{pq}\}$
and 
\[
\lim_{\substack{s \to 0 \\ s>0}}f_s=f.
\]
If the exponents $p$ and $q$ are relatively prime to each other, the 
level set $\{f(x,y)=0\}$ can be
parametrized by the monomial map 
$$
t\in \mathbb{C} \mapsto (t^q/2^{p-1},t^p/2^{q-1}) \in \mathbb{C}^2.
$$
The Chebyshev composition identity (see \cite{Riv}) 
\[
T(p,T(q,z)) = 
T(pq, z),
\] 
implies that in this case, 
the Chebyshev polynomials can also be used to parametrize the 
level sets $\{f_s(x,y)=0\}$ as well. Indeed, the map 
$$
t \in \mathbb{C} \mapsto (s^q T(q,t/s),s^p T(p,t/s)) \in \mathbb{C}^2
$$ 
parametrizes the level set $\{f_s(x,y)=0\}$. Equivalently, one can verify that
\begin{equation}\label{eq:chebyshev_inequality}
	\lim_{s \to 0} \frac{s^p T(p,t/s)}{2^{p-1}} = t^q
\end{equation}

\subsubsection*{Gusein-Zade method} As a consequence of this, Gussein-Zade 
\cite{GZ_dynkin_two} showed that if 
\[
t \mapsto (t^m, \sum \lambda_k t^k)) 
\]
is a parametrization of a branch with $\lambda_k \in \mathbb{R}$, then
\begin{equation}\label{eq:divide_gusein}
	(t,s) \mapsto \left( \frac{s^m T(m,t/s)}{2^{m-1}}, \sum \frac{ \lambda_k 
		s^p 
		T(k,t/s)}{2^{k-1}}  \right)
\end{equation}
defines, for $s \neq 0$, a divide for that branch. Observe that any real branch 
admits a real parametrization. 

\begin{example}
	We compare now this method with the method described in the previous 
	Sect.~\ref{ss:divides_embedded}. In particular, we consider the plane curve 
	singularity defined by the polynomial 
	\[
	f(x,y)= -x^{8} - x^{7} - 3  x^{5} y + y^{3}
	\]
	which is the one used in Example~\ref{ex:divide_translate}. A direct 
	computation 
	yields that the map
	\[
	(t^3, t^7 + t^8)
	\]
	is a parametrization of the branch $\{f(x,y)=0\}$ near the origin. And so 
	the formula eq.~\ref{eq:divide_gusein}, yields that the following describes 
	a 
	divide for each $s \neq 0$:
	\[
	\begin{split}
		(t,s) \mapsto & \left( \frac{s^3 T(3,t/s)}{2^{2}}, \, \frac{s^7 
			T(7,t/s)}{2^{6}} + \frac{s^8 T(8,t/s)}{2^{7}} \right) \\
		= \quad & \left( -\frac{3}{4} \, s^{2} t + t^{3}, \right. \\
		& \quad \left. -\frac{7}{64} \, s^{6} t + \frac{7}{8} \, s^{4} t^{3} - 
		\frac{7}{4} \, s^{2} t^{5} + t^{7} + \frac{1}{128} \, s^{8} \right).
	\end{split}
	\]
	
	Where the equality is just the evaluation of the Chebyshev polynomials and 
	simplification of the expressions. For $s \neq 0$ we obtain the divide 
	depicted in  Fig.~\ref{fig:divide_478}. Compare with that of 
	Fig.~\ref{fig:manual_divide_final_perturbed}
	\begin{figure}
		\centering \includegraphics*[scale=0.6]{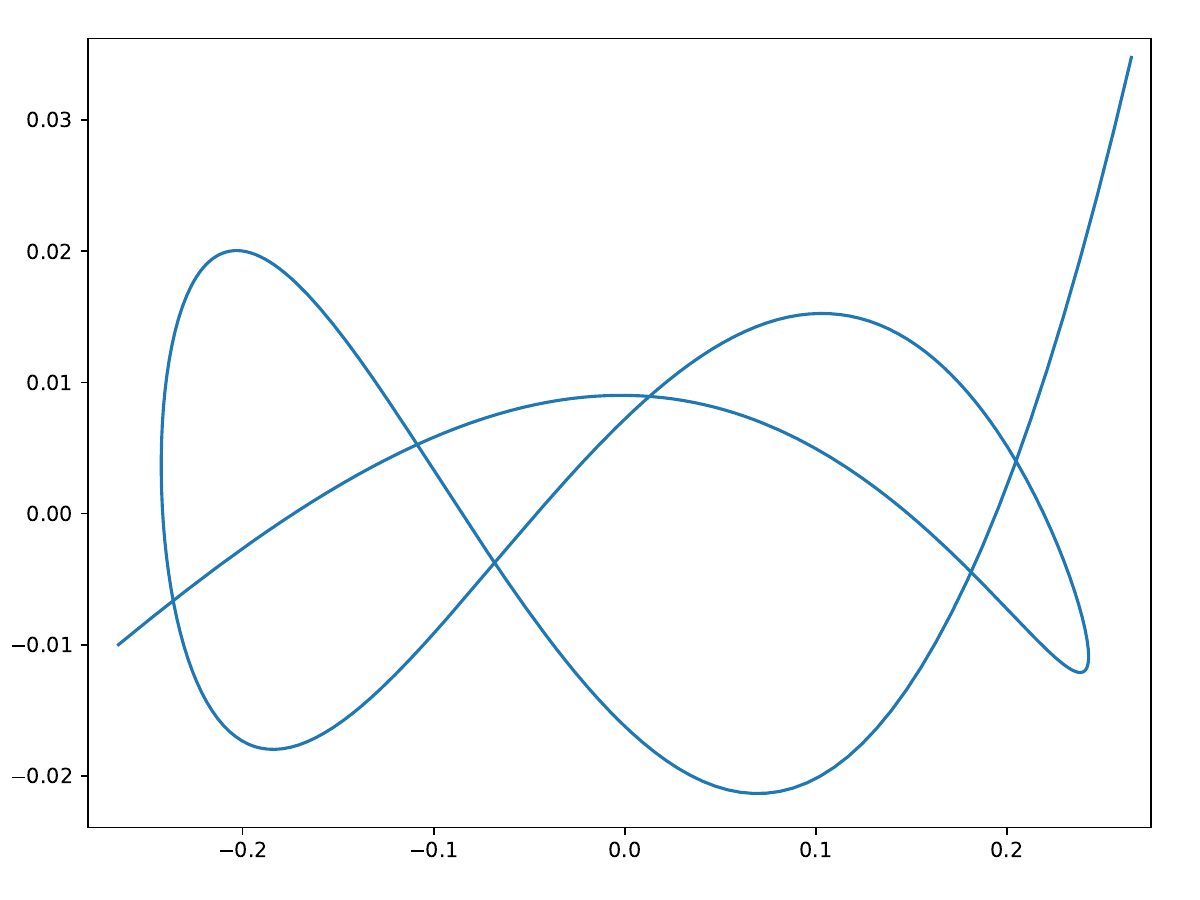}
		\caption{Divide for the plane curve defined by $-x^{8} - x^{7} - 3  
			x^{5} y 
			+ y^{3}$ using Gusein-Zade method.}
		\label{fig:divide_478}
	\end{figure}
\end{example}

It follows from the defining 
property $T(x,cos \theta) = \cos (n \theta)$ that the curve defined by the 
above parametrization has 
\begin{equation}\label{eq:double_points_pq}
	\delta(P_{p,q}) = (p-1)(q-1)/2
\end{equation}
double points near the origin. Since 
$f_s$ has $(p-1)(q-1)$ critical 
points (all of them  Morse), we have proven that the previous 
parametrization yields a divide. More concretely, for each 
$s\in ]0,1]$ the intersection 
\[
P_{p,q;s}:=\{f_s(x,y)=0\}\cap D
\]
is a divide for the Brieskorn-Pham singularity  defined by the polynomial 
$2^{p-1}x^p-2^{q-1}y^q$ 
at $0\in \mathbb{C}^2$. 
The curve  
$P_{p,q}:=\{f_1(x,y)=0\}$ can be drawn in a rectangular box as in 
Fig.~\ref{fig:7_5_divide}. 
As a first type of building block 
we will need 
the box $B:=[-1,1]\times [-1,1]$ with the curve $P_{p,q}$. 
If $(p,q)=1$ 
holds, the curve $P_{p,q}$ is the image of
$$
\begin{aligned}
	T_{p,q}:[-1,1] &\longrightarrow  B \\
	t &\mapsto  (T(p,t),T(q,t))
\end{aligned}
$$
which leaves the box through the corners. In general, the immersed 
curve has several components, which are immersions of the interval or of the
circle. At most two components are immersions of the interval, 
which leave the box through the corners. 

\begin{figure}
	\centering \includegraphics*[scale=0.3]{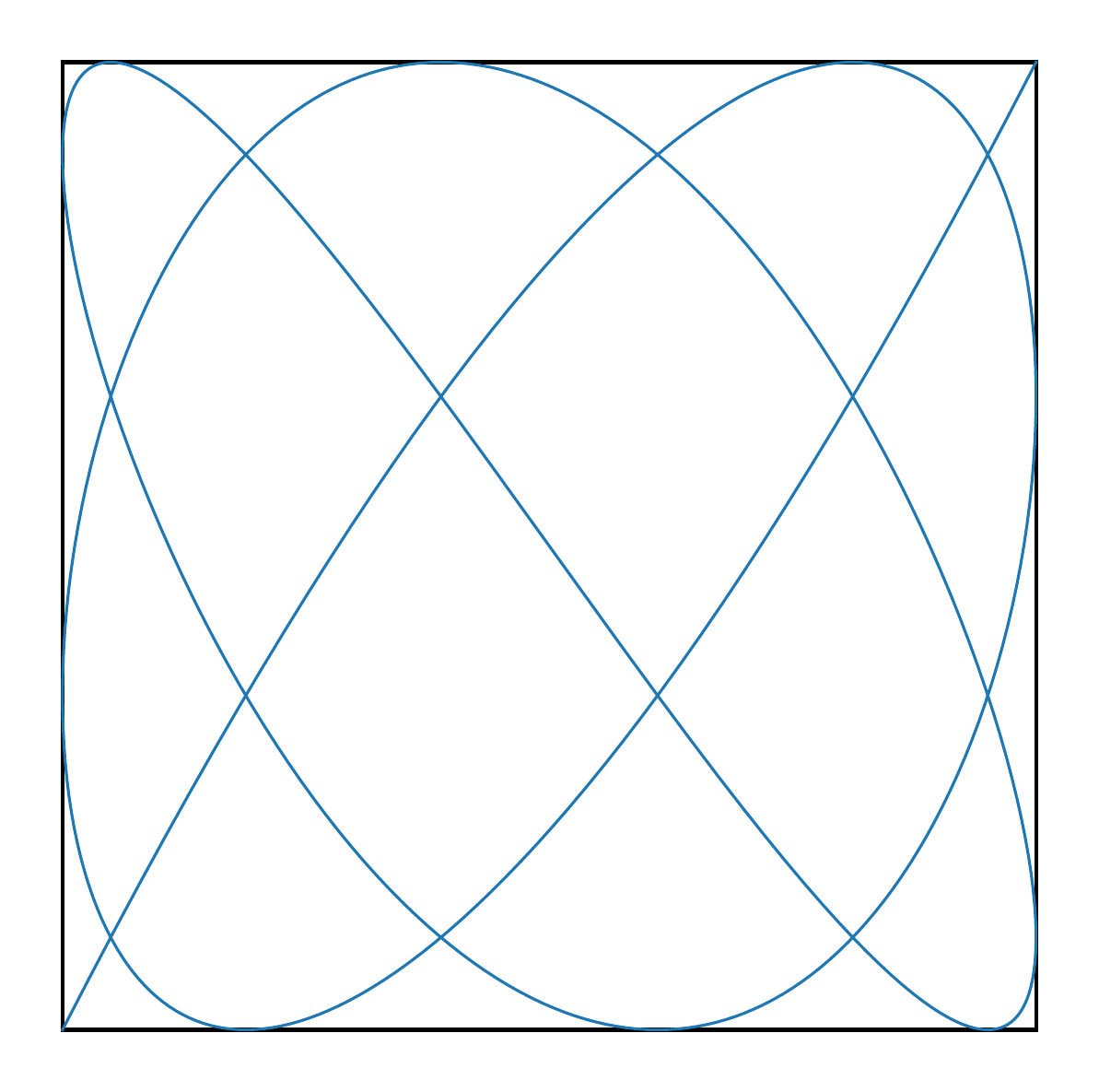}
	\caption{Divide in box $[-1,1]\times [-1,1]$ for  $2^6x^7-2^4y^5=0$.}
	\label{fig:7_5_divide}
\end{figure}

\subsubsection*{Cabling divides}
Let $P$ be any divide having one branch given by an immersion 
$\gamma: [-1,1] \to D$. We assume, that the speed vector 
$\dot{\gamma}(t)$ and the position vector $\gamma(t)$ 
are proportional at $t=\pm 1$, i.e. the
divide $P$ meets $\partial{D}$ at right angles.
Let 
\[
N\gamma:[-1,1]\times [-1,1] \to D
\] 
be an immersion of a
rectangular box around $P$, that is, the restriction 
\[
N\gamma |_{[-1,1]\times \{0\}}
\]
is the immersion $\gamma$ and the image of $N\gamma$ is in a small tubular
neighborhood of $P$. For instance, 
for a small value of the parameter $\eta \in \mathbb{R}_{>0}$
the following
expression defines
such an immersion $N\gamma:B \to \mathbb{R}^2$ of the  rectangular box 
$B:=[-1,1]\times [-1,1]$: 
\[
N\gamma(s,t):=\gamma(t)+s\eta {J(\dot{\gamma}(t)) \over \|\dot{\gamma}(t)\|},
\]
where $J$ is the rotation of $\mathbb{R}^2$ over $\pi/2$, equivalently, $J$ is 
multiplication by $i$ if we see the divide as a subset of $\mathbb{C}$.
The four corners $N\gamma(\pm 1,\pm 1)$ are on the circle of
radius 
$\sigma:=\sqrt{1+\eta^2}.$ 
We finally define 
\[
N\gamma(s,t):=N_{\eta}\gamma(s,t):={1 \over \sigma} \left(\gamma(t)+
s\eta {J(\dot{\gamma}(t)) \over \|\dot{\gamma}(t)\|}\right),
\]
that is an immersion $N\gamma:B \to D$ mapping the corners of 
the box $B$ 
into $\partial{D}$.

\begin{figure}
	\centering
	\centering \includegraphics*[scale=0.6]{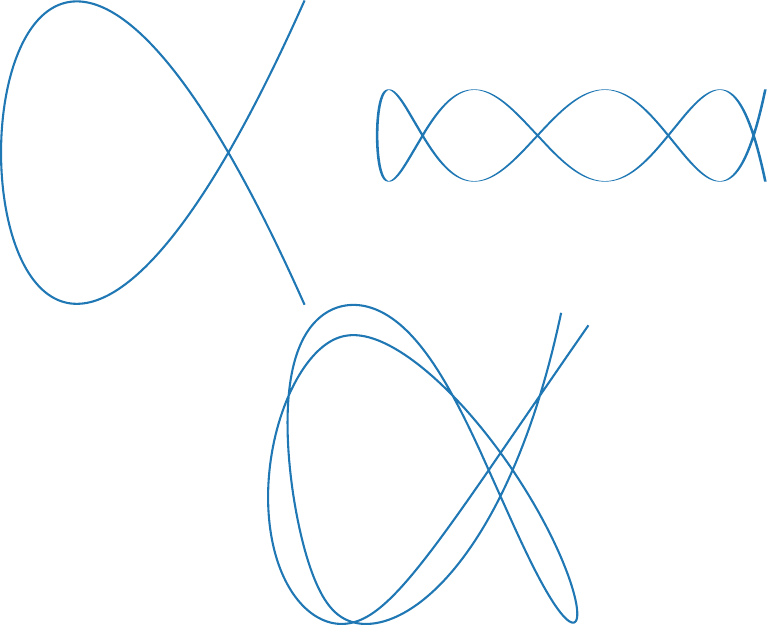}
	\caption{On the upper left part we see the divide $P_{2,3}$, on the upper 
		right part the divide $P_{2,9}$ and on the lower part, the divide 
		$P_{2,9}*P_{2,3}$.}
	\label{fig:29_23}
\end{figure}

\begin{definition}\label{def:cabling_divide}
	Let $P$ be a divide and let $p,q$ be two natural numbers with $\gcd(p,q)$, 
	we 
	denote by 
	\[
	P_{p,q}*P
	\]
	the divide in $D$, 
	which is the image by $N\gamma:B \to D$ of 
	$P_{p,q} \subset B$. We call $P_{p,q}*P$ the {\em $(p,q)$-iterated divide 
		around $P$}. 
\end{definition}
Note that by definition, an iterated $*$-composition of divides
has to be evaluated from
the right to the left.

The number of double
points $\delta(P_{p,q}*P)$ of $P_{p,q}*P$ is computed inductively 
from the number of double points $\delta(P)$ of $P$ by:
\begin{equation}\label{eq:double_points_iterated}
	\delta(P_{p,q}*P)=(p-1)(q-1)/2+\delta(P)p^2.
\end{equation}
Indeed, observe that the divide has:
\begin{enumerate}
	\item  By construction, at least as many 
	crossings as $P_{p,q}$ had (recall 
	eq.~\ref{eq:double_points_pq}), and hence the first summand. 
	\item  Near each intersection point of $P$, there appear after the cabling 
	operation, $p$ lines crosssing with other $p$ 
	lines, and hence the second summand.
\end{enumerate}
Let $R_{\eta}\gamma$ be the union of the image of 
$N_{\eta}\gamma$ with the two
chordal caps at the endpoints of $\gamma$. One can think of $R_{\eta}\gamma$ as 
a thick version of $\gamma$. The connected components
of $D \setminus R_{\eta}\gamma$ correspond via inclusion to
the connected components of $D \setminus P$. We declare a connected 
component of $D \setminus P_{p,q}*P$ to be signed by $+$, if the component 
contains a component of $D \setminus R_{\eta}\gamma$, that corresponds
to a $+$ component of $D \setminus P$. In this case we 
call the connected component
of $D \setminus R_{\eta}\gamma$ a $P_+$-{\it component}. Observe 
that there exists
a chess board sign distribution for the components of 
$D \setminus R_{\eta}\gamma$ that makes $P_+$-components indeed to $+$
components.

The field $\Phi_{p,q}$ of cones on the box $B \subset \mathbb{R}^2$ is 
the subset in the tangent space of 
$TB$ given by:
$$
\Phi_{p,q}:=\{(x,u) \in TB | \,|<u,e_1>_{\mathbb{R}^2}| \geq 
\cos(\alpha(x))\|u\|\}
$$
where $e_1=(1,0)\in \mathbb{R}^2$ and where $\alpha:B \to \mathbb{R}$ is a 
function, such
that for every $(x,u) \in TB$ with $x \in P_{p,q}$ and $u \in T_xP_{p,q}$
we have the equality 
$$
|<u,e_1>_{\mathbb{R}^2}| = \cos(\alpha(x))\|u\|.
$$
Moreover,
$\alpha$ has the boundary values 
$\alpha(\pm 1,t)=0$ and $\alpha(s,\pm 1)=\pi/2$.
We interpolate
the function $\alpha$ on $B$ by upper and lower convexity, i.e
such that ${\partial^2\over{\partial{t}}}\alpha< 0$ and 
${\partial^2\over{\partial{s}}}\alpha> 0$.
The definition of $\alpha(x)$ seems to be 
conflicting at the double points of
the curve $P_{p,q}$; 
at a double point $x=(x_1,x_2)$ of the curve 
$P_{p,q}$ the two tangents lines to $P_{p,q}$
have opposite slopes $\tan(\alpha(x))$ and $-\tan(\alpha(x))$, since
the curve $P_{p,q}$ is defined by  the  equation 
$$
T(q,x_1)-T(p,x_2)=0
$$ 
that separates the variables. 
For example, a nice such function
$\alpha$ is given by:
$$
\alpha(x_1,x_2):=\arctan({q\sqrt{1-x_2^2} \over p\sqrt{1-x_1^2}}).
$$

The interest of the field $\Phi_{p,q}$ comes from the following 
lemma, that follows from the definitions.

\begin{lemma} Let the image of $\gamma:[-1,1] \to D$ be a divide $P$, that
	meets $\partial{D}$ at right angles.
	For $\eta > 0$ small enough, the intersection of $S^3 \subset 
	T\mathbb{R}^2$ 
	with the image in $T\mathbb{R}^2$ 
	of the
	field of sectors $\Phi_{p,q}$ under the differential 
	of $N_{\eta}\gamma$ is a
	tubular neighborhood of the knot $L(P)$.
	The composition of $P_{p,q}:[-1,1]  \to B:=[-1,1] \times [-1,1]$ and of
	$N\gamma:B \to D$ 
	is again a divide, whose knot is a torus cable knot of type $(p,q)$ of the 
	knot $L(P)$.  
\end{lemma}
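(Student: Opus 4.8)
The assertions are meant to follow by unwinding definitions, so the plan is to track three objects through the cabling construction $N_\eta\gamma$: the knot $L(\cdot)$ of a divide (Definition~\ref{def:link_divide}), the divide $P_{p,q}*P=N_\eta\gamma(P_{p,q})$ (Definition~\ref{def:cabling_divide}), and the field of cones $\Phi_{p,q}$, which has been set up precisely so that both statements become transparent.

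First I would establish the tubular neighborhood claim. With $P=\gamma([-1,1])$, the knot $L(P)\subset S^3_D$ consists of the norm-$\rho$ vectors $(x,u)$ with $x\in P$ and $u$ tangent to $P$ at $x$ --- two such points over each smooth interior point of $P$, coming from the speed vectors of $\gamma$ and of $\gamma^*(t):=\gamma(-t)$. A model for a tubular neighborhood of $L(P)$ in $S^3_D$ is therefore the set of $(x,u)\in S^3_D$ with $x$ in a thin neighborhood of $P$ and $u$ in a thin cone about the tangent line to the nearest point of $P$, opening up to the full fiber circle as $x\to\partial D$. Now $N_\eta\gamma$ maps the box $B$ into an arbitrarily thin neighborhood of $P$ as $\eta\to 0$, with $N_\eta\gamma|_{[-1,1]\times\{0\}}=\gamma$, and $D(N_\eta\gamma)$ sends the horizontal direction $e_1$ at a point $(s,t)$ to a vector converging, as $\eta\to 0$, to the direction of $\dot\gamma(t)$. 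Hence $D(N_\eta\gamma)(\Phi_{p,q})$ is a field of cones about the tangent lines of $P$ whose aperture $\alpha(x)$ collapses to those lines along the sides of $B$ pushed toward $P$ and opens to $\pi/2$ along the sides pushed toward $\partial D$. Intersecting with $S^3_D=\{(x,u):\|(x,u)\|=\rho\}$ and using $\eta$ small, one checks in the three local models --- over a smooth interior point of $P$, over $\partial D$, and over a double point of $P$ --- that $D(N_\eta\gamma)(\Phi_{p,q})\cap S^3_D$ is a closed solid-torus neighborhood $V$ of $L(P)$; this gives the first assertion.

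Next I would identify the knot. By Definition~\ref{def:cabling_divide}, $P_{p,q}*P=N_\eta\gamma(P_{p,q})$, and it is a divide because $N_\eta\gamma$ is an immersion carrying the corners of $B$ to $\partial D$ and $P_{p,q}$ has only ordinary double points, transverse to the sides of $B$. For its knot, the key point is the defining equality $|\langle u,e_1\rangle|=\cos(\alpha(x))\|u\|$ for $x\in P_{p,q}$ and $u\in T_xP_{p,q}$: every tangent vector of $P_{p,q}$ lies on the wall $\partial\Phi_{p,q}$, so $L(P_{p,q})$ runs along the boundary torus of the solid torus $\Phi_{p,q}\cap S^3_B$, whence $L(P_{p,q}*P)\subset\partial V$. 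On $\partial V$ the curve $L(P_{p,q}*P)$ is the image of $L(P_{p,q})\subset S^3_B$, which is the $(p,q)$ torus knot --- a direct computation from the Chebyshev parametrization $t\mapsto(T(p,t),T(q,t))$, equivalently the link of the Brieskorn-Pham singularity $2^{p-1}x^p-2^{q-1}y^q$. Tracing the two box directions through $N_\eta\gamma$, with the $\gamma$-direction of $B$ becoming the longitude of $V$ and the transverse direction the meridian, shows that this curve wraps $p$ times longitudinally and $q$ times meridionally around $L(P)$; hence $L(P_{p,q}*P)$ is the torus cable knot of type $(p,q)$ of $L(P)$.

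The step I expect to be the main obstacle is the framing in this last argument: one must verify that the solid-torus structure that $D(N_\eta\gamma)$ and $\Phi_{p,q}$ put on $V$ is the one whose preferred longitude makes the $(p,q)$-count above agree with the cabling convention used elsewhere in the paper (cf.\ the description of $K_j$ as the $(a_j,\lambda_j)$-cable of $K_{j-1}$ in Section~\ref{ss:puiseux_pairs}), rather than a framing twisted by the writhe of the diagram $P$. This is precisely the role of the upper and lower convexity conditions imposed on the interpolating function $\alpha$, together with the rescaling by $\sigma=\sqrt{1+\eta^2}$ in the definition of $N_\eta\gamma$: they pin down the field of cones so that the boundary torus of $V$ carries the parallel copy of $L(P)$ provided by the divide itself, which is the longitude used in the cabling. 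Once this framing is identified, the count of longitudinal and meridional wraps of the torus knot $P_{p,q}$ inside $V$ is unambiguous, and the lemma follows.
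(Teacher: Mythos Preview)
The paper does not give a proof of this lemma at all: it simply introduces it with the sentence ``The interest of the field $\Phi_{p,q}$ comes from the following lemma, that follows from the definitions,'' and moves on. Your proposal is exactly the unwinding of definitions that this sentence invites, so your approach is the paper's approach, carried out in full.

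One comment on your closing paragraph: your instinct that framing is the delicate point is correct, but your proposed resolution is not quite on target. The convexity conditions on $\alpha$ and the rescaling by $\sigma=\sqrt{1+\eta^2}$ are there for more mundane reasons (a well-defined interpolation of the cone aperture, and getting the corners of $B$ onto $\partial D$); they do not by themselves pin down the longitude. What actually fixes the framing is that the solid torus $V$ is built from the differential of $N_\eta\gamma$, so the ``horizontal'' direction $e_1$ in $B$ is carried to the tangent direction of $P$, and the core $\{s=0\}\times[-1,1]$ of $B$ is carried to $P$ itself: the longitude of $V$ is thus the lift of $P$ by its own tangent framing, which is precisely $L(P)$. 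The $(p,q)$ count is then made relative to \emph{this} longitude, and that is the convention the paper uses throughout when it later matches Newton pairs $(a_j,\lambda_j)$ with iterated cablings. So the potential writhe discrepancy you flag is absorbed into the meaning of ``type $(p,q)$'' in this context rather than eliminated by the auxiliary choices.
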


The image of the field of sectors $\Phi_{p,q}$ of $B$ under 
the differential of 
$N_{\eta}\gamma$ will be denoted by 
$\Phi_{\eta,p,q}\gamma$ and for small $\eta$ contains 
those vectors, that 
have feet near $P$ and form a small 
angle with the tangent vectors of the divide $P$.

\subsubsection*{Divides for plane branches}

Let $\{f_{a,b}(x,y)=0\}$ be a
singularity having one branch and with essential Puiseux pairs (recall 
Sect.~\ref{ss:puiseux_pairs})
$(a_i,b_i)_{1\leq i \leq n}$.
The theorem of S. Gusein-Zade \cite{GZ_inter_two} very efficiently
describes a divide
for the singularity $\{f_{a,b}(x,y)=0\}$ in a closed form, namely
the iteratively composed divide
$$
P_{a_n,b'_n}* \dots *P_{a_2,b'_2}*P_{a_1,b_1},
$$
where the numbers $b'_2, \dots ,b'_n$ can be computed recursively, as
we will show here below.
We denote by $S_k, 1\leq k \leq n$ the divide
$$
P_{a_k,b'_k}*P_{a_{k-1},b'_{k-1}} \dots *P_{a_2,b'_2}*P_{a_1,b_1}
$$ and
let $f_k(x,y)$ be a specific  equation
for a singularity with essential \index{Puiseux pairs} Puiseux pairs 
$(a_i,b_i)_{1\leq i \leq k}$.

Remember, that the product $a_1 a_2 \dots a_k$ is the
multiplicity at $0$ of the curve $\{f_k(x,y)=0\}$ and that 
the linking number $\lambda_k$ of 
$L(S_k)$ and $L(S_{k-1})$ in $S^3$ can be computed recursively by:
$$
\lambda_1=b_1,\, \lambda_{k+1}=b_{k+1}-b_ka_{k+1}+\lambda_ka_ka_{k+1}.
$$

The linking number $\lambda_k$ is equal to the intersection multiplicity 
$$
\dim \mathbb{C}[[x,y]]/(f_k(x,y),f_{k-1}(x,y))
$$ 
at $0$
of the curves $\{f_k(x,y)=0\}$ and $\{f_{k-1}(x,y)=0\}$ (recall the definition 
of Newton pairs eq.~\ref{eq:newton_pairs}).
We have also for the linking number $\lambda_k$ an interpretation in terms 
of divides (see the next section for the first equality) which leads to the 
definition of the numbers $b_k'$:
$$
\lambda_k = \#(S_k \cap S_{k-1})=2a_k \delta(S_{k-1}) + b_k'.
$$
Remembering, that we already have 
computed recursively the numbers $\delta(S_k)$ and $\#(S_k \cap S_{k-1})$,
we conclude that $b_k'$ too can be computed recursively. 

\begin{figure}
	\centering \includegraphics*[scale=0.35]{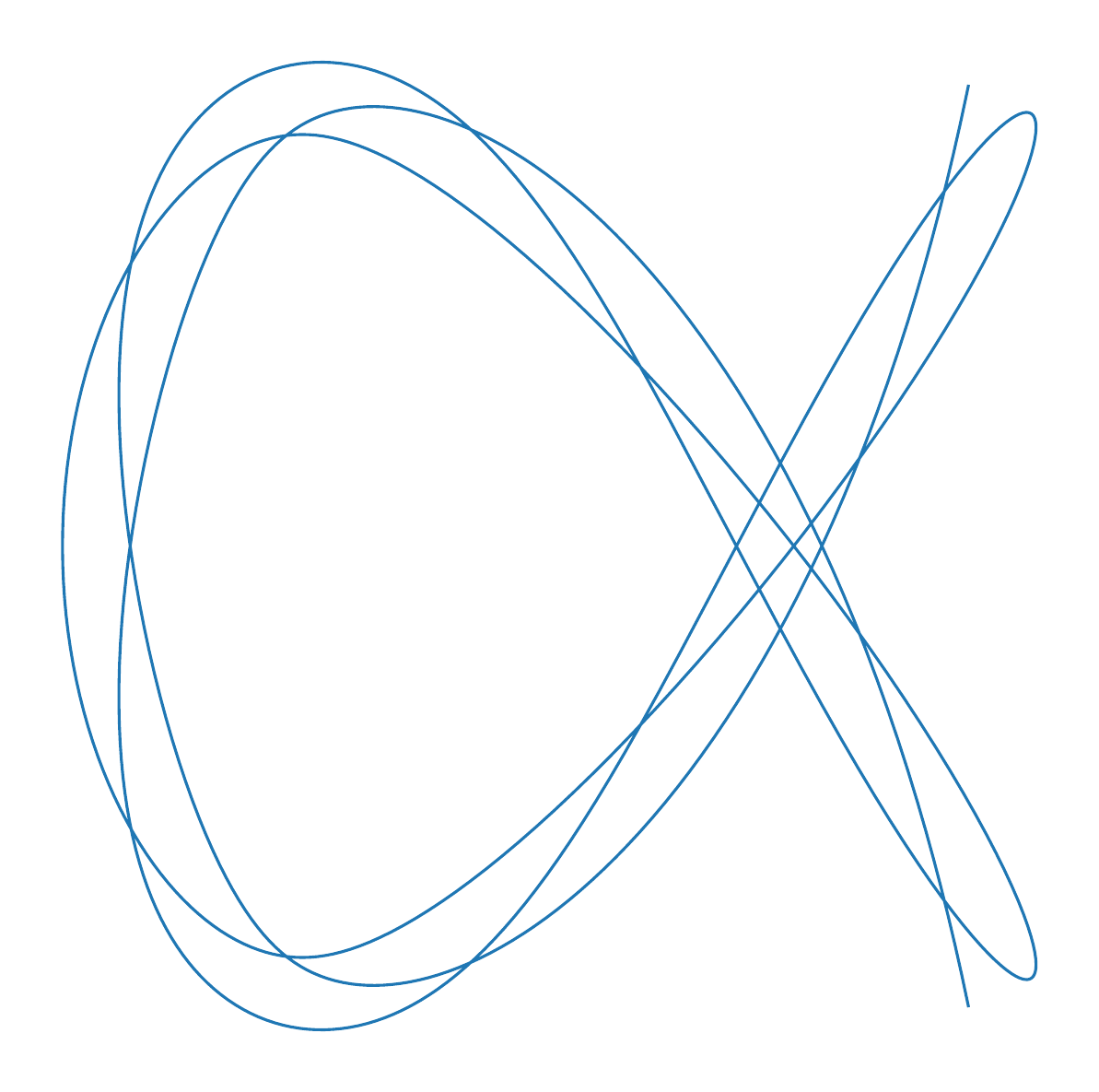}
	\caption{The divide $P_{3,14}*P_{2,3}$ for
		$(y^2-x^3)^3-6x^7y^2-2x^{10}-x^{11}$.}
	\label{fig:3_14_2_3_divide}
\end{figure}

For  example, for
the Puiseux expansion $y=x^{3/2}+x^{7/4}$ we 
have: $\lambda_1=3,\, \lambda_2=7-3\cdot2+3\cdot2\cdot2=13,\, 
b'_2=13-2\cdot2\cdot1=9$.
Hence,  the divide for the irreducible singularity with Puiseux expansion
$y=x^{3/2}+x^{7/4}$ is the divide $P_{2,9}*P_{2,3}$, see Fig.~\ref{fig:29_23}. 
For the Puiseux expansion
$y=x^{3/2}+x^{11/6}$ we found: $\lambda_1=3,\, 
\lambda_2=11-3\cdot3+3\cdot2\cdot3=20,\,
b'_2=20-2\cdot3\cdot1=14$. Hence, the divide for 
its singularity 
$\{(y^2-x^3)^3-6x^7y^2-2x^{10}-x^{11}=0\}$ is 
$P_{3,14}*P_{2,3}$, 
see Fig.~\ref{fig:3_14_2_3_divide}.

Using the contents of the (independent) Sect.~\ref{s:abstract} (c.f. 
\cite{Acampo_real_def,Acampo_generic_imm}), we can read off from this 
divide the Milnor fibration of the
singularity $\{f_{a,b}(x,y)=0\}$. In particular we can describe the Milnor
fiber with a distinguished basis of quadratic vanishing 
cycles. Using the above
iterated cabling construction, we will also be able to read off 
from the divide, the reduction system (Definition~\ref{def:reduction_system}) of
the geometric monodromy of an irreducible plane curve singularity (see 
Sect.~\ref{ss:reduction_system}), as 
described in \cite{Acampo_sur_la}. For instance, 
intersection numbers in the sense of Nielsen of \index{geometric vanishing 
	cycle} quadratic vanishing cycles and
reduction cycles can be computed.

\subsubsection*{Divides for all real plane curve singularities} In general, for 
an isolated singularity of a real polynomial 
$f(x,y)$ having
several local branches, the divide $\{f_1(x,y)=0\} \cap D$ of a 
real morsification $f_t(x,y)$ may have immersed circles as 
components. The above cabling construction 
$P_{p,q}*P$ does not work if the divide $P$ consists
of an immersed circle. Of course, if one is willing to 
change the equation
of the singularity to an equation, which defines a 
topologically equivalent
singularity and which has only real local branches, one will only
have to deal with divides consisting of 
immersed intervals. If we do not want
to change the real equation, we conjecture how a different type of block would 
produce real morsifications for these real singularities. Note that the 
question whether any real singularity admits a morsification or not is still 
open. See \cite{Lev} for some partial results to answer the question in the 
positive and see \cite{Fom} for other recent conjectures that relie on this 
open question.

We conjecture that, if the real plane curve contains pairs of complex conjugate 
branches, then we could still produce divides using a second type of building 
blocks for
a cabling construction, see Fig.~\ref{fig:lissajous_block}.
\begin{figure}
	\centering \includegraphics*{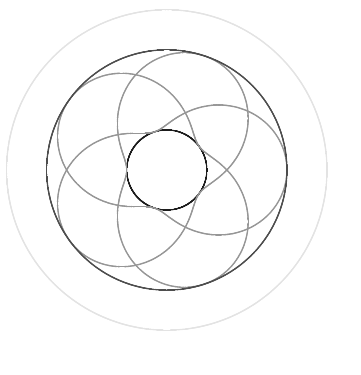}
	\caption{The building block  $L_{3,5}.$ of Lissajous type}
	\label{fig:lissajous_block}
\end{figure}

These building blocks are the divides 
$L_{p,q}$
in the annular region 
$A:=\{(x,y) \in D | 1/4 \leq \sqrt{x^2+y^2} \leq 3/4 \}$.
If for the integers $(p,q)=1$ holds, the divide $L_{p,q}$ is the Lissajous
curve 
$$
s \in [0,1] \mapsto (1/2+1/4 \sin(2\pi q s))( \sin(2\pi p s),\cos(2 \pi p s))
$$
in $A \subset D$. The curve $L_{p,q}$ has $p$-fold rotational
symmetry. If $(p,q)=r > 1$ the divide $L_{p,q}$ is defined
defined as the union of 
$r$ rotated copies of 
$L_{p/r,q/r}$ with rotations of angles $2\pi k/p, k=0 \dots r-1,$ of $D$.
Again, the system of curves $L_{p,q}$ has a $q$-fold rotational symmetry.

The star-product $L_{p,q}*P$ can be defined as above 
if the divide $P$ consists
of one immersed circle. 
The two types of building blocks $P_{p,q}$ and $L_{p,q}$ 
together with the star-products $P_{p,q}*P$ and $L_{p,q}*P$ will 
allow one to describe the iterated cablings of real plane curve
singularities in general.

\subsubsection*{Natural orientations} The link of a divide $P$ 
is 
naturally oriented by the following recipe. Let
$\gamma:]0,1[\to D$ be a local regular 
parametrization of $P$. The orientation of $L(P)$ is such that the map
$t\in ]0,1[ \mapsto \left(\gamma(t), \lambda(t)\dot{\gamma}(t) \right)\in L(P)$ 
is
oriented. Here $\lambda(t)$ is a positive scalar function, which ensures that
the map takes its values in $L(P)$. For a connected divide $P$ we orient
its fiber surface $F_P$ such that the oriented boundary of 
$(F_P \cup L(P),L(P))$ coincides with the orientation of $L(P)$. 

Vanishing
cycles $\delta_c$ where $c$ is a critical point of $f_P$ do not carry
a natural orientation, since, for example, the third power of the geometric 
monodromy
of the singularity $\{x^3-y^2=0\}$ reverses the orientations on the vanishing 
cycles. 

We orient the tangent space 
$TD=D\times \mathbb{R}^2$ so that the orientation of its unit sphere $S^3$ 
as the boundary of its unit 
ball gives that the linking numbers of $L(P_1)$ and $L(P_2)$ are positive 
for generic
pairs of
divides $P_1$ and $P_2$. In fact, the
orientation $TD$ is opposite to its orientation as tangent space. With this 
convention, we have $Lk_{S^3}(L(P_1),L(P_2))=\#(P_1\cap P_2)$, a fact which was 
already used in the previous section. 

\section{Description of the Milnor fiber}\label{s:description_Milnor}

In this section, we give a description of the Milnor fibers associated to a 
divide that lie over the points $\pm 1$ and $\pm i$. The description is done in 
terms of more simple pieces. This decomposition is useful in the forthcoming 
sections where we describe the geometric monodromy associated with the 
divide.

\subsection{Description of the Milnor fibers over $+1$ and $-1$}

\index{Milnor fiber}

Let $P$ be a connected divide and let $\pi_P:S^3\setminus L(P) \to S^1$ 
be its fibration of Theorem~\ref{thm:fibered_link}. In this subsection, we show 
how to 
read off geometrically
the fibers $F_1:=\pi_P^{-1}(1)$ and  
$F_{-1}:=\pi_P^{-1}(-1)$. For our construction we assume the disk $D \subset 
\mathbb{C}$ oriented. We think of its orientation as an orthogonal complex 
structure $J:TD \to TD$. Define
\[
P_+:=\{x \in D \setminus \partial D | f_P(x) > 0, df_P(x) \not=0 \}
\] 
The level
curves of $f_P$ define  a oriented foliation $F_+$ on $P_+$, where 
a tangent vector $u$ to a level of $f_P$ at $x \in P_+$ is oriented so that
$df_P(x)(Ju) > 0.$ Put 

\[
P_{+,+}:=\{(x,u) \in S^3 | x\in P_+,\  u \in T(F_+)\}
\]
and 
\[
P_{+,-}:=\{(x,u) \in S^3 | x\in P_+,\  u \in T(F_-)\},
\] 
where $F_-$ is the foliation with the opposite orientation. 
Put 
\[
F_M:=\{(x,u) \in S^3 | x=M\}
\]  
for a maximum $M,$ and 
\[
F_m:=\{(x,u) \in S^3 | x=m \}
\] 
for a minimum $m$ of 
$f_P$. For saddle points $s$ of $f_P$ (equivalently, double point of $P$), 
define 
\[
F_{s,+}:=\{(x,u) \in S^3 | x=s,\  H_{f_P}(x)(u,u) < 0\}
\]
and 
\[
F_{s,-}:=\{(x,u) \in S^3 | x=s,\  H_{f_P}(x)(u,u) > 0\}
\] 
Observe that the angle in between 
$u,v \in F_m$ or $u,v \in F_M$ is a natural distance function on $F_m$ or 
$F_M,$ which allows us to identify $F_m$ and $F_M$ with a circle. Finally, put 
\[
\partial D_+:=\{x \in \partial D | f_P(x) > 0\}
\]
Let $p_{\mathbb{R}}:S^3 \to D$ be the projection $(x,u) \mapsto x.$
The projection $p_{\mathbb{R}}$ maps each of the 
sets $P_{+,+}$ and $P_{+,-}$ 
homeomorphically to $P_+.$ The sets $F_m$ or $F_M$  are homeomorphic 
to $S^1,$ if $M$ or $m$ is a maximum or minimum of $f_P$ respectively. And 
the sets $F_{s,\pm}$ 
are homeomorphic to a disjoint union of two  open 
intervals if $s$ is a crossing point of $P.$ The set $\partial D_+$ is 
homeomorphic to a disjoint union of 
open intervals. 
We have the following decomposition of $F_1$:
\[
F_1= P_{+,+} \cup P_{+,-} \cup \partial D_+\cup \bigcup_{s \in  P_d} F_{s,+} 
\cup \bigcup_{M \in  P_+} F_M.
\]
Where $P_d$ denotes the set of double points of the divide.
Observe that for $(x,u) \in P_{+,+}\cup P_{+,-}$ we 
have $\theta_P(x,u) \in 
\mathbb{R}_{>0}$ (recall eq.~\ref{eq:theta_func}) since $f_P(x)>0,\ 
df_P(x)(u)=0$ 
and 
$\chi(x)H_{f_P}(u,u)\leq 0$. So $P_{+,+}\cup P_{+,-}$ is an open and dense 
subset in $F_1$.
Accordingly, with the obvious changes of signs, we get a similar 
description 
for $F_{-1}$:
\[
F_{-1}= P_{-,+} \cup P_{-,-} \cup \partial D_-\cup \bigcup_{s \in P_d} F_{s,-} 
\cup \bigcup_{m \in P_-} F_m.
\]

\subsubsection{Combinatorial description} 
Forming 
the closure of $P_{+,+}\cup P_{+,-}$ in $F_1$ leads to
the following combinatorial 
description of the above decomposition. First, we add to 
the open surface $F_1$ its boundary and get 
\[
\bar F_1:=F_1 \cup L(P).
\]
Now let $R$ be a connected component of $P_+.$ The inverse image
$p_{\mathbb{R}}^{-1}(R) \cap \bar F_1$ in $\bar F_1$ are two disjoint open 
cells or
cylinders 
$R_+ \subset P_{+,+}$ and $R_- \subset P_{+,-}$ 
which are in fact subsets of $F_1.$ The closure of 
$R_+$ in  $\bar F_1$ is a surface $\bar R_+$ with boundary  and corners. 
The set $F_M$ is a common boundary component without corners of 
$\bar R_+$ and $\bar R_-$ if $M$ is a maximum in $R.$ If there is 
no maximum in $R$ the closures  $\bar R_+$ and $\bar R_-$ meet along the 
component of $\partial D_+$ which lies in the 
closure of $R.$ Let $S,R$ be  connected components of $P_+$ such that the 
closures of $R$ and $S$ have a crossing point 
$s$ in common. The closures of $R_+$ and $S_-$ in $F_1$ meet along one of 
the components of $F_{s,+}$ and the closures of 
$R_-$ and $S_+$ in $F_1$ meet along the other component of $F_{s,+}.$ The 
closure of 
$F_{s,+} \cap \bar R_+$ in 
$\bar R_+$ intersects $L(P)$ in $2$ corners, that are  also corners of 
the closure of $F_{s,+} \cap \bar S_-$ in 
$\bar S_-$ (see Fig.~\ref{fig:gluing_fol}). Notice that the foliation $F_+$ on 
$P_+$ does 
not lift to a
foliation, which extends to an oriented foliation on $F_1$.

\begin{figure}
	\centering
	\centering \includegraphics*{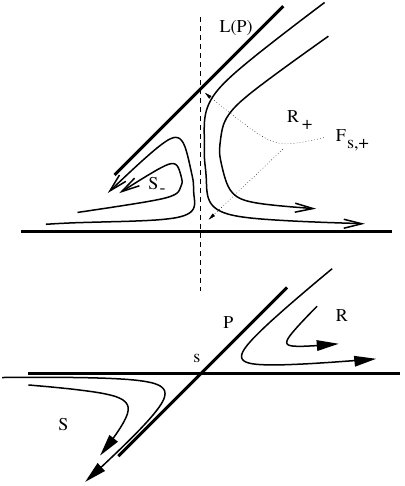}
	\caption{Gluing of the lifts of $R$ 
		with $+$ and $S$ with $-$ foliation to $F_1$.}
	\label{fig:gluing_fol}
\end{figure}

\subsection{Description of the Milnor fibers over $+i$ and $-i$}

\index{Milnor fiber}
Now we quickly work out the fibers $F_i:=\pi_P^{-1}(i)$ and 
$F_{-i}:=\pi_P^{-1}(-i).$ Observe that 
$F_i$ and $F_{-i}$ are projected  to a subset of 
$P \cup \mathrm{supp}(\chi)$ by  $p_{\mathbb{R}}$. Put

\[
F_{i,P}:=\{(x,u) \in S^3 | x \in P,\  \chi(x)=0,\ df_P(x)(u)>0 \}.
\]
For a crossing point $c$ of $P$ we put
\[
F_{i,c}:=\{(x,u) \in S^3 | \chi(x) > 0,\  df_P(x)(u) > 0 ,\  
f_P(x)- {1\over 2}\eta^2\chi(x)H_{f_P}(c)(u,u)=0\}.
\]
Observe that 
\[
f_P(x)- {1\over 2}\eta^2\chi(x)H_{f_P}(c)(u,u) = 
\mathrm{Re}(\theta_{P,\eta}(x,u))
\]
and recall (eq.~\ref{eq:theta_func}). In order to get nice sets it is necessary 
to 
choose 
a nice bump function 
$\chi$ as before.
The set 
\[
F_{i,P}\cup F_{i,c}
\]
is an open and dense subset in $F_i$ and forming 
its closure results in
\[
\bar F_i:=F_i \cup L(P)
\]
leads to a 
combinatorial description of $F_i$ similar to the one given above. 

We would like to  direct the interested reader to some beautiful pictures in 
the works of Sebastian Baader, Pierre 
Dehornoy and Livio Liechti \cite{Deho,Liv} which complement the ones contained 
in this work and which exemplify how the Milnor fiber(s) are recovered from the 
divide. The referee called our attention to these works and we are thankful for 
that.

\section{Descriptions of the monodromy}
\label{s:description_monodromy}

In this section we give a complete description of the \index{geometric 
	monodromy} geometric monodromy of 
the  fibered link $L(P)$.

In Sect.~\ref{ss:monodromy_product}  we give the monodromy as a composition of 
right-handed Dehn 
twists. This \index{geometric monodromy!factorization} factorization, certainly 
completely determines the monodromy but 
it is general not trivial to give the Nielsen-Thurston decomposition from a 
factorization. In Sect.~\ref{ss:other_decompositions} we take care of this and 
give 
the 
minimal reduction system of  curves of  the Nielsen-Thurston decomposition of 
the monodromy.

\subsection{Monodromy as a product of Dehn twists} \label{ss:monodromy_product}
We will use the integral 
curves of  the distribution $J(\ker(df_P)),$ which 
pass through the crossing points of the divide $P.$ In a connected component 
$R$ of $D \setminus P$, those 
integral curves of  
$J(\ker(df_P))$ meet at the critical point of $f_P$ in the component 
$R$ with distinct tangents,
or they go to distinct points of $\partial D.$ 

\begin{figure}
	\centering
	\centering \includegraphics*[scale=0.8]{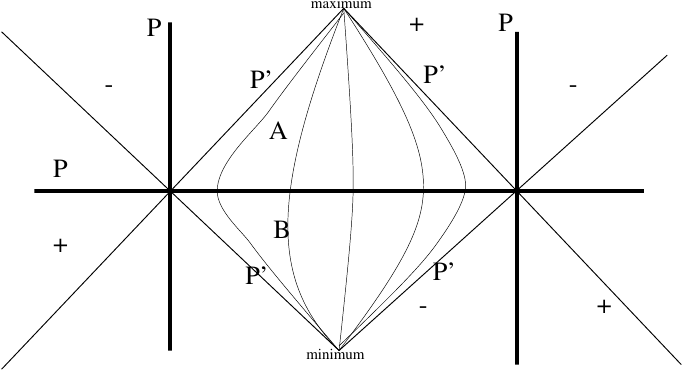}
	\caption{ Two tiles with the $J(\ker(df_P))$ foliation.}
	\label{fig:Jker_fol}
\end{figure}

We denote by $P'$ the union of the integral curves of $J(\ker(df_P)),$ 
which pass through the crossing points of $P.$ The complement in $D$ of the 
union  $P' \cup P \cup \partial D$ is a disjoint 
union of tiles, which are homeomorphic to open squares or triangles 

\begin{notation}
	We call 
	a pair $(A,B)$ of tiles {\em opposite}, if $A\not= B$ and 
	the closures of $A$ and $B$ in $D$ have a segment of $P$ in common. For an 
	opposite pair of tiles $(A,B)$ let $A|B$ be the 
	interior in $D$ of the union of the closures of $A$ and $B$ in $D$. 
\end{notation}

The set
is foliated by the levels of $f_P$ and also by the 
integral lines of the distribution $J(\ker(df_P)).$ Both foliations are 
non-singular and 
meet in a $J$-orthogonal way (see Fig.~\ref{fig:Jker_fol}). 

\begin{notation}
	For a pair $(A,B)$ of opposite we put,	
	\[
	F_{1,A|B}:=\{(x,u) \in F_1 | x \in A|B \}
	\]
	and
	\[
	F_{-1,A|B}:=\{(x,u) \in F_{-1} | x \in A|B \}
	\]
	The sets $F_{1,A|B}$ and $F_{-1,A|B}$ each have  two connected components:
	
	\[
	F_{1,A|B}=F_{1,+,A|B} \cup F_{1,-,A|B}
	\]
	where
	\[
	F_{1,+,A|B}:=\{(x,u) \in F_1 | x \in A|B,\  df_P(Ju) > 0 \}
	\]
	\[
	F_{-1,+,A|B}:=\{(x,u) \in F_{-1} | x \in A|B,\  df_P(Ju) > 0 \}
	\]
	and
	\[
	F_{1,-,A|B}:=\{(x,u) \in F_1 | x \in A|B,\  df_P(Ju) < 0 \}
	\]
	\[
	F_{-1,-,A|B}:=\{(x,u) \in F_{-1} | x \in A|B,\  df_P(Ju) < 0 \}
	\]
\end{notation}
The closures of $F_{1,\pm,A|B}$ in $\bar F_1$ and of $F_{-1,\pm,A|B}$ in 
$\bar F_{-1}$ are  polygons with $6$ edges: let 
$M,c,c'$ be the vertices of the triangle $A$; the six edges of the closure 
$H$ of $F_{1,+,A|B}$ in $\bar F_1$ are
\[
\begin{array}{ll}
	\{(x,u) \in H \mid x=M\}, \quad
	& \{(x,u) \in H \mid x \in [c,M]\}, \\[0.3em]
	\{(x,u) \in H \mid x=c\}, 
	& \{(x,u) \in H \mid x \in [c,c']\}, \\[0.3em]
	\{(x,u) \in H \mid x=c'\}, 
	& \{(x,u) \in H \mid x \in [c',M]\},
\end{array}
\]
where $[M,c]$ and $[M,c']$ are segments included in $P'$ and $[c,c']$ is a 
segment in $P.$ Next, we define two diffeomorphisms
\[
S_{i,A|B}: F_{1,A|B} \to  F_{-1,A|B}
\]
and
\[
S_{-i,A|B}: F_{1,A|B} \to  F_{-1,A|B}
\]
for each pair of opposite tiles $(A,B)$. To do so we choose 
the adapted function $f_P:D 
\to \mathbb{R}$ (recall 
Definition~\ref{def:adapted_function} and 
Lemma~\ref{lem:existence_adpated_function}) 
such 
that the maxima are of value $1$ and the minima of value $-1.$ Moreover, we
modify the function $f_P$ at the boundary $\partial{D}$ such that
along each of the
integral lines of the foliation given by the distribution $J(\ker(df_P))$ 
the function $f_P$ takes  
all values in
an interval $[-m,m]$ with $1 \geq m > 0$. The latter modification of $f_P$ is
useful if the tile $A$ or $B$ meets $\partial{D}$. We also 
need the 
rotations $J_{\theta}:T(D) \to T(D)$ about the angle $\theta \in [-\pi,\pi].$ 
Recall that the complex structure $J$ is precisely $J_{\pi/2}$.

We are now ready to define the map $S_{i,A|B}$. Let $(x,u) \in F_1$ with $x\in 
A|B$ and do as follows:
\begin{enumerate}
	\item  let 
	$y\in A|B$ be the point in the opposite tile on the integral line of the 
	distribution $J(\ker(df_P))$ with $f_P(x)=-f_P(y);$ 
	\item  now move $x$ to $y$ along
	the integral curve $\gamma_{x,y}(t), t \in [f_P(x),f_P(y)]$  which connects 
	$x$ and $y$ with the parameterization  
	$f_P(\gamma_{x,y}(t))=t$; 
	
	\item consider the rotation angle function 
	$\theta(x,t):={(|f_P(x)|-t)\pi \over 2|f_P(x)|}$ and move the vector
	$u$ along the  path
	\[
	\left( \gamma_{x,y}(t),U_{x,y}(t)\right) :=\left(  \gamma_{x,y}(t),
	s(x,t)( J_{\theta(x,t)}(|f_P(x)| u_{x,y}(t)/2)+ u_{x,y}(t) )  \right),
	\]
	where $(\gamma_{x,y}(t),u_{x,y}(t)) \in S^3$ is 
	the continuous vector field 
	along $\gamma_{x,y}(t)$ such that $df_P(u_{x,y}(t))=0$, 
	we have the equality $u_{x,y}(f_P(x))=U_{x,y}(f_P(x))=u$, 
	and the stretching factor $s(x,t) \geq 1$ is chosen such that
	$(\gamma_{x,y}(t),U_{x,y}(t)) \in S^3$ holds.
	\item We finally
	define
	\[
	S_{i,A|B}((x,u)):=\left( y,u_{x,y}(f_P(y)) \right)= 
	\left(y,U_{x,y}(f_P(y))\right)
	\]
\end{enumerate}
The definition of $S_{-i,A|B}$ 
is analogous, but uses rotations  in the sense of $-J$. 

The names $S_{i,A|B}$ or $S_{-i,A|B}$ indicate that the flow lines 
$(\gamma_{x,y}(t),u_{x,y}(t))$ pass through the fiber 
$F_i$ or  $F_{-i}$ respectively. The flow lines defining $S_{i,A|B}$ or 
$S_{-i,A|B}$ are 
different. However, the maps $S_{i,A|B}$ and 
$S_{-i,A|B}$ are equal. The system of paths $(\gamma_{x,y}(t),U_{x,y}(t)) \in 
S^3$ 
is local
near the link $L(P)$, i.e. for every neighborhood $N$ in $S^3$ of a point
$(x',u') \in L(P)$ there exists a neighborhood $M$ of $(x',u')$ in $S^3$ 
such that
each path $(\gamma_{x,y}(t),U_{x,y}(t))$ with $(x,u) \in F_1 \cap M$ stays in
$N$. It will follow that the flow lines of the monodromy 
vector field are  meridians of the link $L(P)$ in its neighborhood. 

\begin{remark}\label{rem:rem_fib_near_link}
	After verifying that the constructed flow lines form a monodromy flow for 
	the 
	$\pi_P$, this last fact proves that the map $\pi_{P,\eta}$ is a fibration 
	near 
	the 
	link $L(P)$. Proving this was a step in the proof of 
	Theorem~\ref{thm:fibered_link} 
	and this is an alternative proof.
\end{remark}
The partially defined diffeomorphisms $S_{i,A|B}$ and 
$S_{-i,A|B}$ glue to diffeomorphisms
\[
S_i,S_{-i}:\bigcup_{A|B} F_{1,A|B} \to \bigcup_{A|B} F_{-1,A|B}
\]
where the sum ranges over all
opposite pairs of tiles $(A,B)$ with $A\subset P_+$. The gluing poses 
no problem 
since those  unions are disjoint, but  the diffeomorphisms $S_i$ and $S_{-i}$
do not extend continuously to $F_1.$ We will see that the 
discontinuities, which are the obstruction for extending $S_i$ and 
$S_{-i},$ can be 
compensated by a composition of  right half Dehn twists (recall 
Definition~\ref{def:half_twist}).

\begin{figure}
	\centering
	\centering \includegraphics*{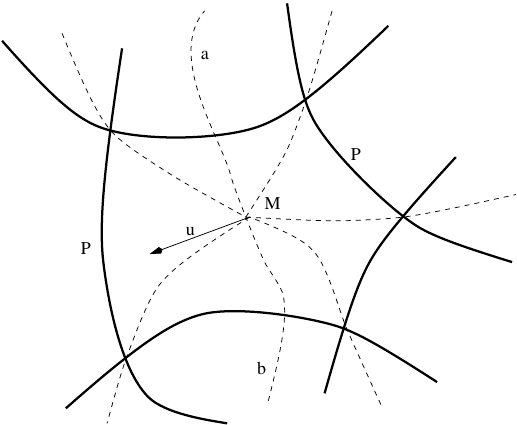}
	\caption{The discontinuity at $F_M.$}
	\label{fig:disc_F_M}
\end{figure}

At a maximum $M \in D$ of $f_P$ each vector $(M,u) \in S^3$ belongs to $F_1.$ 
Let $a$ and $b$ be the integral curves 
of $J(\ker(df_P))$ with one endpoint at $M$ and  orthogonal to $u.$ We 
assume that neither $a$ nor $b$ passes through 
a crossing point of $P$ (see Fig.~\ref{fig:disc_F_M}) and that $a$ and $b$ 
belong to
different pairs of opposite tiles. A continuous extension of the 
maps $S_i$ or $S_{-i}$ has to map the vector $(M,u)$ 
to  two vectors based at the other 
endpoint of $a$ and $b.$ Since these endpoints 
differ in general, a continuous extension is impossible. 

In order to allow a continuous extension at the common endpoint of 
$a$ and $b$ we make a new surface $F'_1$ by 
cutting $F_1$ along the cycles $F_M,$ where $M$ runs through all the 
maxima of 
$f_P$ and by gluing back after a rotation of angle $\pi$ of each of the 
cycles  
$F_M.$ In the analogous manner, we make the surface $F'_{-1}$ in doing 
the half twist along $F_m,$ where $m$ runs through the minima of $f_P.$ The 
subsets $F_{1,A|B}$ do not meet the support of the 
half twists, so they are canonically again subsets of $F'_1,$ which we denote 
by $F'_{1,A|B}.$ Analogously, we have 
subsets $F'_{-1,A|B}$ in $F'_{-1}.$ 
A crucial observation is that the partially defined diffeomorphisms 
\[
S'_i,S'_{-i}:\bigcup_{A|B}F'_{1,A|B} \to \bigcup_{A|B}F'_{-1,A|B}
\]  
have less discontinuities, which are the obstruction for a continuous extension.
We denote by $a'$ and $b'$ the arcs on $F'_1,$ which correspond to 
the arcs $a$ and $b$ on $F_1.$ 
Indeed, the continuous extension  at the  end points of $a'$ and $b'$  
is now possible. 

Let $s$ be a crossing point of $P$ and let $I_{s,+}$ be the segment 
of $P',$ which passes through $s$ and lies in $P_+.$ The inverse 
image of $Z^{\circ}_s:=p_{\mathbb{R}}^{-1}I_{c,+} \cap F_1$  is not a cycle, 
except 
if both endpoints of $I_{s,+}$ lie on $\partial D.$ If a maximum 
$M$ of $f_P$ is an endpoint of $I_{s,+},$ the inverse image 
$p_{\mathbb{R}}^{-1}(M) \cap F_1$ consists of $2$ points on $F_M,$ 
which are antipodal. On the new surface $F'_1$ the inverse image 
$p_{\mathbb{R}}^{-1}(I_{s,+}) \cap F'_{-1}$ is a cycle. 
An extension of 
$S'_i$ and $S'_{-i}$ will be discontinuous along this cycle 
(see Fig.~\ref{fig:disc_Z_s}).
We now  observe that the partially defined diffeomorphisms $S'_i$ 
and 
$S'_{-i}$
have discontinuities along the cycle 
$p_{\mathbb{R}}^{-1}(I_{s,+}) \cap F'_{-1}$, which can be compensated 
by half twists along the 
inverse images $p_{\mathbb{R}}^{-1}(I_{s,-}) \cap F'_{-1},$ where $s$ 
runs through the crossing points of $P.$ 
Note that for a crossing point $s$ of $P$ the curve
$Z'_{s,-1}:=p_{\mathbb{R}}{-1}(I_{s,-}) \cap F'_{-1}$ is in fact a simply 
closed curve on $F'_{-1}$.

\begin{figure}
	\centering
	\centering \includegraphics*{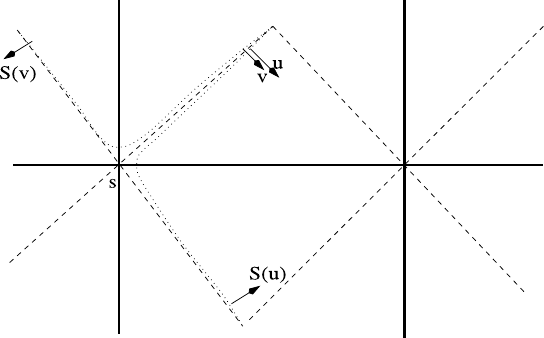}
	\caption{The discontinuity along $Z_s$.}
	\label{fig:disc_Z_s}
\end{figure}

For a crossing point $s$ of the divide $P$ we now define a simply 
closed curve on $F_1,$ by putting:
$$
Z_s:=Z^{\circ}_s \cup \bigcup_{M \in \partial I_{s,+}} F_{s,M}
$$
where for an endpoint $M$ of $I_{s,+},$ which is a maximum of $f_P,$ 
the set 
$ F_{s,M}$ is the simple arc of $F_M,$ which connects the two points 
of $Z^{\circ}_s \cap F_M$ and contains an 
inward tangent vector of $I_{s,+}$ at $M.$ As we already have noticed 
the set $Z^{\circ}_s \cap F_M$ has only 
one element if $M \in \partial D,$ so we define $F_{s,M}:=\emptyset$ 
in that case.

We have the inclusion $F_m \subset F_{-1}.$ We now define the cycle 
$Z_m \subset F_1.$ Define for a minimum $m$ of $f_P$ the region
$$
B_m:= \bigcup_{A|B, m \in \bar{B}} F_{-1,A|B}
$$
Let $B_{m,\epsilon}$ be the level curve
$$
B_{m,\epsilon}:=\{(x,u) \in B_m | f_P(x)=-\epsilon\}
$$
For a small $\epsilon$ the set 
$$(S_i)^{-1}(B_{m,\epsilon} \cap \bigcup_{A|B} F_{-1,A|B})$$ 
is a union of copies of an open interval and is not a cycle but 
nearly a cycle. The union closes up to a cycle by 
adding small segments which project to the integral lines through the 
crossing points of $P.$ We denote this cycle by 
$Z_m \subset F_1$.

We are now able to state the main theorem.

\begin{theorem} \label{thm:factorization_monodromy}
	Let $P$ in $D$ be a connected divide. Let 
	$\pi_P:S^3\setminus L(P) \to S^1$ be the fibration of 
	Theorem~\ref{thm:fibered_link} 
	The counter 
	clockwise monodromy of the fibration
	$\pi_P$ is the composition of right handed Dehn twists 
	\[
	T:=T_- \circ T_. \circ T_+:F_1 \to F_1,\]
	where $T_-$ is 
	the product of the right handed twists along $Z_m,$ $m$ running through the 
	minima of $f_P,$ $T_.$ is the product of the right handed Dehn 
	twists along  the cycles $Z_s,$ $s$ 
	running through the crossing points of $P,$ and $T_+$ is the product 
	of the right handed twists along $F_M,$  $M$ running through the maxima of 
	$f_P.$
\end{theorem}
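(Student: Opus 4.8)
The plan is to compute the monodromy of $\pi_P$ by cutting the base circle at $\pm 1$, describing the parallel transport of $\pi_P$ explicitly over the tiles of $D$, and then packaging the failure of this explicit transport to be globally continuous as right half Dehn twists (Definition~\ref{def:half_twist}) which, added up over the two halves of $S^1$, assemble into the asserted full right-handed Dehn twists.

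First I would fix a monodromy vector field for $\pi_P$, i.e.\ a lift of the positively oriented unit tangent field of $S^1$ to $S^3\setminus L(P)$ which near the binding $L(P)$ realizes the meridian flow produced in the proof of Theorem~\ref{thm:fibered_link} and revisited in Remark~\ref{rem:rem_fib_near_link}. Writing $h\colon F_1\to F_1$ for the resulting counter-clockwise monodromy, we have $h=\Psi_-\circ\Psi_+$, where $\Psi_+\colon F_1\to F_{-1}$ is the transport from argument $0$ to argument $\pi$ through $F_i$ and $\Psi_-\colon F_{-1}\to F_1$ is the transport from $\pi$ to $2\pi$ through $F_{-i}$. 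Away from the balls around the Morse points of $f_P$ and away from the tubular neighbourhood of $L(P)$, the function $\theta_{P,\eta}$ is, to leading order in $\eta$, the Milnor-type expression $f_P(x)+i\eta\,df_P(x)(u)$; over a pair of opposite tiles $(A,B)$ with $A\subset P_+$, the explicit family of paths $(\gamma_{x,y}(t),U_{x,y}(t))\subset S^3$ written down just before the statement lifts the counter-clockwise direction of the argument and passes through $F_i$, so that $\Psi_+$ restricted to $F_{1,A|B}$ agrees, up to an isotopy fixing $L(P)$, with the model map $S_{i,A|B}\colon F_{1,A|B}\to F_{-1,A|B}$; near $L(P)$ the transport is the meridian flow, which matches the way the pieces $F_{1,A|B}$ are glued across the binding. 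By the same computation with the sense of $J$ reversed, the maps $S_{-i,A|B}$ (whose flow lines pass through $F_{-i}$) describe the transport of $\pi_P$ from $F_1$ to $F_{-1}$ through $F_{-i}$, i.e.\ the inverse $\Psi_-^{-1}$; equivalently one may invoke that the involution $(x,u)\mapsto(x,-u)$ conjugates $\theta_{P,\eta}$ to $\overline{\theta_{P,\eta}}$, exchanging $F_i$ and $F_{-i}$ and intertwining $\Psi_+$ with $\Psi_-^{-1}$.

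Then I would account for the fact that the glued partial maps $S_i=\bigcup_{A\subset P_+}S_{i,A|B}$ and $S_{-i}=\bigcup_{A\subset P_+}S_{-i,A|B}$ do not extend continuously to $F_1$. Their discontinuity loci are precisely the cycles $F_M$ over the maxima of $f_P$, the cycles $F_m$ over the minima, and the arcs $Z^\circ_s$ lying over the segments $I_{s,+}\subset P'$ at the crossings. Passing from $F_1$ to $F'_1$ by the half twist along the cycles $F_M$ and from $F_{-1}$ to $F'_{-1}$ by the half twist along the cycles $F_m$ removes the first two families of discontinuities; the residual discontinuities of $S'_i$ lie along the cycles $p_{\mathbb{R}}^{-1}(I_{s,+})\cap F'_{-1}$ and are compensated by half twists along the cycles $p_{\mathbb{R}}^{-1}(I_{s,-})\cap F'_{-1}$, after which $S'_i$ is an honest diffeomorphism realizing $\Psi_+$; the analogous statement holds for $S'_{-i}$ and $\Psi_-^{-1}$. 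Since $S_i=S_{-i}$ as partial maps, the two tile-flow contributions cancel in the composition $h=\Psi_-\circ\Psi_+$, leaving only the half twists; I would then check that each matching pair of right half twists — one from $\Psi_+$, one from $\Psi_-$ — assembles, via Definition~\ref{def:half_twist}(a), into a single right-handed Dehn twist along the corresponding curve on $F_1$, namely $F_M$ for each maximum, $Z_s$ for each crossing, and $Z_m$ for each minimum. The order follows from the ranking of the critical values of $f_P$: the half twist $F_1\to F'_1$ along the $F_M$ is applied at the start of $\Psi_+$ (critical value $+1$, crossed first), the crossing twists occur near $F_{\pm i}$ (critical value $0$), and the minima twists are only seen on $F_1$ after the crossing twists (critical value $-1$, crossed last); hence $h=T_-\circ T_.\circ T_+$ with $T_+=\prod_M T_{F_M}$, $T_.=\prod_s T_{Z_s}$ and $T_-=\prod_m T_{Z_m}$. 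Within each of the three blocks the relevant cycles are pairwise disjoint, so the internal ordering is immaterial.

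The hard part will be the orientation bookkeeping: verifying that all of the half twists produced above are \emph{right}, rather than left, half twists, and doing so consistently with the stated left-to-right order. This is governed by the ``complex'' orientation of $T\mathbb{R}^2=\mathbb{C}^2$, the complex structure $J=J_{\pi/2}$, and the orientations of $L(P)$ and of $F_1$ fixed in the discussion of natural orientations (recall that the orientation of $TD$ there is opposite to the tangent-bundle orientation). One must check that the sense of the rotations $J_\theta$ in the definition of $S_{i,A|B}$, the $\pi$-rotation used to pass from $F_1$ to $F'_1$ along a cycle $F_M$, and the compensating move at a crossing all yield right half twists with respect to this orientation. I expect this to reduce to a single local-model computation at one Morse point of $f_P$ — in the spirit of the computation in the proof of Theorem~\ref{thm:fibered_link}, where $\theta_P$ was written out explicitly near a maximum — showing that the monodromy of that local holomorphic fibration is the positive Dehn twist around the vanishing cycle $F_M$ (respectively $F_m$ at a minimum, $Z_s$ at a saddle). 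Once handedness is pinned down at one such point it propagates along the monodromy flow, and the factorization of the theorem follows.
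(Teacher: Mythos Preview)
Your proposal is correct and follows essentially the same route as the paper's proof: both compute the monodromy by comparing the two half-monodromies $F_1\to F_{-1}$ built from the tile maps $S_i$ and $S_{-i}$, compensate the discontinuities at maxima, saddles and minima by half twists, and use $S_i=S_{-i}$ to cancel the tile contributions, leaving only pairs of half twists that assemble into full right Dehn twists. The paper packages the last step via the formalism of \emph{minimal positive pairs of Dehn twists} $(p_\bullet,q_\bullet)$ of Definition~\ref{def:half_twist}, writing the monodromy as $p_+\circ S'_i\circ p_.\circ p_-\circ(q_+\circ S'_{-i}\circ q_.\circ q_-)^{-1}$ and reading off $T_-\circ T_.\circ T_+$ directly from $q_\bullet^{-1}\circ p_\bullet=$ right Dehn twist; your critical-value heuristic for the order and your local-model check of handedness amount to the same thing.
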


\begin{proof} We need to introduce 
	one more surface. 
	Let $F''_{-1}$ 
	be the surface obtained from the surface $F'_{-1}$ by cutting $F'_{-1}$ 
	along the cycles $Z'_{s,-1}$ and by gluing back after a half
	twist along each $Z'_{s,-1},$ with $s$ running through the 
	crossing points of $P$. We still have partially defined diffeomorphisms
	$$
	S''_i,S''_{-i}:\bigcup_{A|B}F'_{1,A|B} \to \bigcup_{A|B}F''_{-1,A|B}
	$$
	since the cutting was done in the complement of 
	$\bigcup_{A|B}F'.$ By a direct inspection we see that the 
	diffeomorphisms extend continuously to
	$$
	S''_i,S''_{-i}:F'_1 \to F''_{-1}
	$$
	Let 
	$$(p_+,q_+):F_1 \to F'_1$$
	$$(p_.,q_.):F''_{-1} \to F'_{-1}$$
	$$(p_-,q_-):F'_{-1} \to F_{-1}$$
	be minimal positive pairs of Dehn twists (recall 
	Definition~\ref{def:half_twist}).
	A direct inspection  shows that the composition
	$$
	p_+ \circ S'_i \circ p_. \circ p_- \circ (q_+ \circ S'_{-i} \circ q_. \circ 
	q_-)^{-1}:F_1 \to F_1
	$$
	is the monodromy of the fibration $\pi_P.$
	This composition evaluates to
	$$
	T_- \circ T_. \circ T_+:F_1 \to F_1
	$$
	
\end{proof}

\subsubsection*{Some consequences}

Next we state some inmediate consequences of the previous constructions.
\begin{remark}
	We list some special properties of the monodromy of links and knots of 
	divides.
	The number of Dehn twists of the above decomposition of the  monodromy 
	equals the 
	first betti number $\mu=2\delta - r +1$ of the fiber, and the total number 
	of 
	intersection points among the core curves of the involved Dehn twists 
	is less then $5\delta.$ This means that the {\em complexity} of
	the monodromy is bounded by a function of $\mu$. For instance, the
	coefficients of the Alexander polynomial of the link of a divide are 
	bounded
	by a quantity, which depends only on the degree of the 
	Alexander polynomial. This observation suggests the
	following definition for the complexity $C$ of an element of the mapping 
	class
	group $\phi$ of a surface: the minimum of 
	the quantity $L+I$ over all decompositions 
	as product of Dehn twists of $\phi,$ where $L$ is the number of factors and
	$I$ is the number of mutual intersections of the core curves. We do not know
	properties of this exhaustion of the mapping class group. Notice, that the
	function $(\phi,\psi) \mapsto C(\psi^{-1}\circ \phi)\in \mathbb{N}$ defines 
	a left
	invariant distance  on the mapping class group.
\end{remark}

\begin{remark}
	It can be seen (see next Sect.~\ref{ss:other_decompositions}) that for any 
	link of a divide the  
	monodromy diffeomorphism and its inverse are conjugate by an
	orientation reversing element in the mapping class group. In our previous
	notations this conjugation is given by the map 
	\[
	(x,u)\in F_1 \mapsto (x,-u) \in
	F_1,
	\]
	which moreover  realizes geometrically the symmetry of G. Torres 
	\cite{Torres}
	\[
	t^{\mu}\chi(1/t)=(-1)^{\mu}\chi(t)
	\]
	for the Alexander polynomial $\chi(t)$ of knots.
\end{remark}

\begin{remark}
	In fact the proof of Theorem~\ref{thm:factorization_monodromy} shows that 
	the 
	fibration of the link of a 
	connected divide $P$ can be filled with a singular fibration in the 
	$4$-ball, which has $3$ singular fibers with only quadratic singularities, 
	as in the case of a divide of the singularity of a complex plane curve. 
	The filling has only two singular fibers if the 
	function $f_P$ has no maxima  or no minima. By this construction  
	from a connected divide we obtain a contractible $4$-dimensional piece with 
	a 
	Lefschetz pencil. This is part of what is usually called {\em Hurwitz 
		equivalence}. For more on this topic, see the classical references by 
	Kas 
	\cite{Kas} or Matsumoto \cite{Mat}, or the more recent by Baykur and Hayano 
	\cite{Bay_Hay}.
\end{remark}
We make the following important observation that we have not lost generality by 
considering only divides which are generic immersions of intervals.
\begin{remark}
	It is important to note that 
	Theorems~\ref{thm:fibered_link} and \ref{thm:factorization_monodromy} 
	remain true for 
	generic 
	immersions of 
	disjoint unions of intervals and circles in the 
	$2$-disk. It is also possible to start with a generic immersion of a
	1-manifold $I$ in an oriented compact connected surface with boundary $S.$
	The pair $(S,I)$ defines a link $L(S,I)$ in the $3$-manifold 
	\[
	M_S:=T^+(S)/\hbox{\rm zip},
	\]
	where $T^+(S)$ is the
	space of oriented tangent directions of the surface $S$ and 
	where $\hbox{\rm zip}$ is the
	identification relation, which identifies $(x,u),(y,v)\in T^+(S)$ 
	if and only if 
	$x=y\in \partial S$ or if $(x,u)=(y,v).$ In order to get a fibered 
	link,  the topological pair  
	$(R,R \cap \partial S)$ has to be contractible for each connected 
	component $R$
	of $S\setminus I$ and moreover, the complement $S\setminus I$ has to allow a
	chess board coloring in positive and negative regions. For more details on 
	this 
	construction, see \cite{Ishi}.
\end{remark}
\subsection{Other decompositions of monodromy} 
\label{ss:other_decompositions}
In this subsection we study yet a decomposition of the {\em algebraic} 
monodromy in 
terms of involutions. The contents of this section come mainly from 
\cite{Acampo_mon_real}.

\subsubsection*{Digression in higher dimensions} We 
start with a discussion about geometric monodromies of isolated hypersurface 
singularities in general dimension (recall the discussion of the subsection
\ref{ss:milnor_fibration} on page \pageref{ss:milnor_fibration}).

Let $f:\mathbb{C}^{n+1} \to \mathbb{C}$ be a map defined by a 
polynomial. We assume that $f(0)=0$ and that $0 \in
\mathbb{C}^{n+1}$ is an isolated critical point of $f$. For $p \in 
\mathbb{C}^{n+1}$ let 
\[\|p\|
=
\sqrt{|z_0(p)|^2+|z_1(p)|^2+ \cdots +|z_n(p)|^2}.
\]
Let $B_{\epsilon}$ be a Milnor ball for the singularity of $f$ 
and let 
\[
{\rm Tube}_{\epsilon,\delta}:=\{ p \in B_{\epsilon} \mid |f(p)| \leq \delta 
\}, 0 < \delta << \epsilon,
\]
be a regular tubular neighborhood of $\{p \in B_{\epsilon} \mid f(p)=0 \}$ in
$B_{\epsilon}$. A {\em monodromy vector field} $X$ for the singularity is a 
smooth 
vector field
\[
p \in {\rm Tube}_{\epsilon,\delta} \mapsto X_p \in \mathbb{C}^{n+1}
\]
such  that we have the following properties for 
$p \in {\rm Tube}_{\epsilon,\delta}$ 
\begin{itemize}
	
	\item $(df)_p(X_p)=2\pi i f(p)$, 
	
	\item $X_p$ is tangent to $\partial{B_{\epsilon}}$ if 
	$p \in \partial{B_{\epsilon}}$,
	
	\item trajectories of $X$  starting at $p \in \partial{B_{\epsilon}}$
	are periodic with period $1$ and are the boundary of a smooth disk in 
	$\partial{B_{\epsilon}}$,
	that is transversal to the function $f$. For this property to be satisfied, 
	the 
	hypothesis that $f$ has a critical point is crucial.
\end{itemize}

Using partition of unity, one can construct monodromy vector fields. The
flow at time $1$ of a monodromy vector field $X$ defines a
monodromy diffeomorphism 
$T=T_X\colon F \to F$, where the manifold with boundary
$(F,\partial(F)):=\{ p \in B_{\epsilon} \mid f(p)=\delta \}$ is 
the \index{Milnor fiber} Milnor fiber of the singularity. The 
relative isotopy class 
of the diffeomorphism $T$ is independent from the chosen 
monodromy vector field and is called the {\em geometric monodromy} of the 
singularity. 
The geometric monodromy is a topological invariant of the singularity 
(see \cite[Theorem 3]{King} for $n \neq 2$ and \cite{Perr} for $n=2$).

From now on we will assume in addition, that the polynomial $f$ is real 
meaning that its coefficients are real numbers. Let $c:\mathbb{C}^{n+1} \to 
\mathbb{C}^{n+1}$
denote the involution on complex space 
given by the complex conjugation of coordinate values.
Hence with the above notations, we 
have $c({\rm Tube}_{\epsilon,\delta})={\rm Tube}_{\epsilon,\delta}$ and 
$c(F)=F$. We denote by $c_F:F \to F$ the restriction of the involution 
$c$ to $F$.

Let $X\colon {\rm Tube}_{\epsilon,\delta} \to \mathbb{C}^{n+1}$ be a 
monodromy vector field for the isolated singularity of $f$. We may 
assume that we have 
constructed the vector field $X$ with more care near the 
boundary of 
the Milnor ball in order to achieve  
that for some 
$\epsilon' < \epsilon$
we have the symmetry
\[
c(X_p)=-c(X_{c(p)}), \text{ for } p \in {\rm Tube}_{\epsilon,\delta}, \text{ 
	with } 
\|p\| > \epsilon'.
\]
Since $f$ is real, we have
$$
c((df)_p(X_p))=(df)_{c(p)}(c(X_p))=-2\pi i f(c(p))
$$
hence, we see (by substituting $q$ for $c(p)$ and accordingly $c(q)$ for $p$) 
that the vector field $X^c$ 
defined by:
$$
q \in {\rm Tube}_{\epsilon,\delta} \mapsto X^c_q:=-c(X_{c(q)}) \in 
\mathbb{C}^{n+1},
$$
is  a monodromy vector field too. Let 
$Y:{\rm Tube}_{\epsilon,\delta} \to \mathbb{C}^{n+1}$ be the  vector field
\[
Y:={X+X^c \over 2},\]
which due to the extra care is also 
a monodromy 
vector field. We have $Y^c=Y$. 
The following is an important symmetry of the
geometric monodromy:

\begin{lemma} Let $T_Y$ be a  monodromy diffeomorphism, which has 
	been computed with a monodromy vector field $Y$ satisfiyng  
	$Y^c=Y$. We have the symmetry
	$$
	c_F \circ T_Y \circ c_F =T_Y^{-1}.
	$$ 
	The geometric
	monodromy $T$ satisfies (up to relative isotopy) the symmetry
	$$
	c_F \circ T \circ c_F =T^{-1}.
	$$
\end{lemma}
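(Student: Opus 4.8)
The plan is to exploit the fact that complex conjugation $c$ is an $\mathbb{R}$-linear involution of $\mathbb{C}^{n+1}$, so that its differential at every point is $c$ itself, and to translate the vector-field symmetry $Y^c=Y$ into a statement about flows. First I would record the elementary fact that $(Dc)_p = c$ for every $p$, which gives, for the pushforward of $Y$ under the diffeomorphism $c$ (using $c^{-1}=c$),
\[
(c_*Y)_q = (Dc)_{c(q)}\big(Y_{c(q)}\big) = c\big(Y_{c(q)}\big).
\]
The hypothesis $Y^c=Y$ is precisely the identity $Y_q = -c(Y_{c(q)})$, so this computation yields $c_*Y = -Y$.

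Next I would invoke the naturality of flows under diffeomorphisms. Let $\Phi_t$ denote the flow of $Y$; it is complete since ${\rm Tube}_{\epsilon,\delta}$ is compact and the trajectories of a monodromy vector field stay inside it (the modulus $|f|$ is constant along them and the field is tangent to $\partial B_\epsilon$). Then $c\circ \Phi_t\circ c^{-1}$ is the flow of $c_*Y=-Y$, that is, $\Phi_{-t}$; using $c=c^{-1}$ and evaluating at $t=1$ gives $c\circ\Phi_1\circ c = \Phi_1^{-1}$ as diffeomorphisms of ${\rm Tube}_{\epsilon,\delta}$. Since $f$ is real and $\delta$ is real, $c$ preserves the Milnor fiber $F=\{f=\delta\}$ (as already noted before the lemma), and since $f(\Phi_t(p))=e^{2\pi i t}f(p)$ the map $\Phi_1$ also preserves $F$; restricting the identity to $F$ and recalling $T_Y=\Phi_1|_F$, $c_F=c|_F$, gives the first displayed equation $c_F\circ T_Y\circ c_F = T_Y^{-1}$. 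One should also check compatibility with the boundary: on $\partial B_\epsilon$ the trajectories of a monodromy vector field are periodic of period $1$, so $\Phi_1$, hence $T_Y$ and also $c_F\circ T_Y\circ c_F$, restricts to the identity on $\partial F$, and the equation is an honest identity of relative diffeomorphisms.

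Finally, the second assertion follows immediately: by the paragraph preceding the lemma, $Y=(X+X^c)/2$ is a monodromy vector field satisfying $Y^c=Y$, and the relative isotopy class of $T_Y$ is by definition the geometric monodromy $T$, independent of the chosen monodromy vector field; hence $c_F\circ T\circ c_F = T^{-1}$ up to relative isotopy. The only genuinely delicate point in the argument is the bookkeeping near the boundary of the Milnor ball --- making sure that the extra care taken in constructing $Y$ is simultaneously compatible with $Y^c=Y$ holding everywhere on ${\rm Tube}_{\epsilon,\delta}$ and with $\Phi_1$ fixing $\partial F$ pointwise --- but this is exactly what the defining properties of a monodromy vector field are designed to guarantee, so no new difficulty arises.
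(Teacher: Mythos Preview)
Your proof is correct and follows essentially the same approach as the paper: both arguments observe that the symmetry $Y^c=Y$ means that conjugation sends $Y$ to $-Y$, hence reverses the time direction of the flow, giving $c_F\circ T_Y\circ c_F=T_Y^{-1}$, and then pass to the mapping class. Your version simply spells out in more detail the pushforward computation and the naturality of flows that the paper compresses into the single phrase ``$c_{{\rm Tube}_{\epsilon,\delta}}$ maps $Y$ to $-Y$ and reverses the orientations of the trajectories.''
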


\begin{proof} The restriction 
	of complex conjugation 
	$c_{{\rm Tube}_{\epsilon,\delta}}\colon {\rm Tube}_{\epsilon,\delta} 
	\to {\rm Tube}_{\epsilon,\delta}$ maps the monodromy vector field
	$Y$ to $-Y$ and $F$ to $F$. Hence, since $c_{{\rm Tube}_{\epsilon,\delta}}$ 
	reverses the orientations of the 
	trajectories, we have 
	$$
	T_Y^{-1}=(c_F)^{-1} \circ T_Y \circ c_F=c_F \circ T_Y \circ c_F.
	$$
	Since the geometric monodromy $T$ is in  the  relative mapping class 
	represented by $T_Y$, we have the symmetry $c_F 
	\circ T \circ c_F =T^{-1}$ at the mapping class group level.
\end{proof}

Symmetries of monodromies as in the lemma 
can occur in the more general context of  so-called 
{\it strongly invertible knots}, see for instance 
\cite{Teissier,Heil_Toll,Kawa}.

See also the work of Sabir 
Gusein-Zade \cite{GZ_Index}, in which he shows
among other results, that the integral \index{monodromy!homological} 
homological monodromy of an isolated 
complex hypersurface singularity
with real defining equation $f$ is the composition of two involutions, both 
conjugated to the action of complex
conjugation on the homology of a real regular fiber. 

\begin{remark}
	The symmetry property $c_F \circ T \circ c_F =T^{-1}$ expresses that
	the geometric monodromy $T$ of a complex hypersurface with real 
	defining equation is conjugate in the mapping class group by an 
	element of order $2$ to its 
	inverse $T^{-1}$. This is a statement  in the mapping class group of the 
	Milnor 
	fiber and
	does not refer to  any complex conjugation, so it can be stated for
	any complex hypersurface singularity with complex defining equation. We say 
	that the singularity  is {\em strongly invertible} if its geometric 
	monodromy
	diffeomorphism $T$ is conjugate by an element of order $2$  in the relative 
	mapping class group of the Milnor fiber  to its
	inverse $T^{-1}$.  Thus, the property of strong invertibility is a 
	topological 
	property for hypersurface singularities.
\end{remark}
We can rewrite the symmetry property as follows:
$T_Y \circ c_F \circ T_Y \circ c_F= {\rm Id}_F$. We see that 
$T_Y \circ c_F:F \to F$ is an involution of $F$. It follows the

\begin{corollary} The geometric monodromy $T$ of an 
	isolated  complex hypersurface singularity, which is 
	defined by a real equation, is the composition of two involutions of the 
	fiber $F_\delta, \delta \in \mathbb{R},$ namely: 
	$T= (T \circ c_F) \circ c_F$, where $c_F$ is the restriction of the complex 
	conjugation.
\end{corollary}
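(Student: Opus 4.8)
The plan is to derive the statement directly from the symmetry lemma proved just above, so that essentially no new work is required. First I would record that $c_F$ is genuinely an involution of $F$: it is the restriction to $F=\{p\in B_\epsilon\mid f(p)=\delta\}$ (with $\delta$ real) of the complex conjugation $c$ on $\mathbb{C}^{n+1}$, and $c^2=\mathrm{Id}$ while $c(F)=F$ because $f$ has real coefficients and $\delta\in\mathbb{R}$; hence $c_F\circ c_F=\mathrm{Id}_F$. Note that $c_F$ is an involution of $F$ as a manifold and does not fix $\partial F$ pointwise, so this is a statement about diffeomorphisms, not about the relative mapping class group — which is exactly what the corollary asks for.

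Next I would take the monodromy diffeomorphism $T_Y$ produced from the symmetric monodromy vector field $Y$ with $Y^c=Y$, for which the lemma gives the \emph{on the nose} identity $c_F\circ T_Y\circ c_F=T_Y^{-1}$; the equality of diffeomorphisms, not merely of isotopy classes, is what the lemma's proof establishes, since $c_{{\rm Tube}_{\epsilon,\delta}}$ carries $Y$ to $-Y$ and reverses the orientation of trajectories. Rewriting, $T_Y\circ c_F\circ T_Y\circ c_F=T_Y\circ T_Y^{-1}=\mathrm{Id}_F$, so $\iota:=T_Y\circ c_F$ satisfies $\iota^2=\mathrm{Id}_F$, i.e. $\iota$ is an involution of $F$. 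Then $T_Y=\iota\circ c_F$ exhibits $T_Y$ as the composition of the two involutions $T_Y\circ c_F$ and $c_F$. Since the geometric monodromy $T$ is by definition the relative isotopy class represented by such a $T_Y$, the factorization $T=(T\circ c_F)\circ c_F$ holds at the level of the chosen representative, which is the content of the statement.

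Finally I would remark that the only genuine content here is the observation that $T_Y\circ c_F$ squares to the identity; everything substantive — constructing a monodromy vector field symmetric under conjugation and well behaved near $\partial B_\epsilon$ so that $c_F\circ T_Y\circ c_F=T_Y^{-1}$ holds exactly — is already contained in the proof of the preceding lemma. Thus there is no serious obstacle. The step most worth double-checking is precisely that the conjugation symmetry is an equality of maps and not just of isotopy classes: otherwise $T_Y\circ c_F$ would only be an involution up to isotopy, and the extra care near the boundary in the construction of $Y$ is exactly what upgrades it to an honest involution of $F$.
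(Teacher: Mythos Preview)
Your proposal is correct and follows exactly the paper's approach: the paper likewise observes that the lemma's identity $c_F\circ T_Y\circ c_F=T_Y^{-1}$ rewrites as $T_Y\circ c_F\circ T_Y\circ c_F=\mathrm{Id}_F$, so $T_Y\circ c_F$ is an involution and $T_Y=(T_Y\circ c_F)\circ c_F$. Your added remarks about $c_F$ being a genuine involution and about the need for the on-the-nose (rather than up-to-isotopy) equality are apt clarifications, but the substance is identical.
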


For $k \in \mathbb{Z}$ we also have the
relation
$T_Y^k \circ c_F \circ T_Y^k \circ c_F= {\rm Id}_F$,
which shows that 
$T_Y^k \circ c_F:F \to F, k \in \mathbb{Z},$ is a sequence of involutions of 
$F$.

The above observations can be applied to plane curve singularities 
in general, since
it follows from the Theory of Puiseux Pairs that
every plane curve singularity is topologically equivalent to a singularity
given by a real equation and hence plane curve singularities are 
strongly invertible.

For complex hyper\-surface singu\-larities of higher dimension the situation
seems to be  op\-posite.
In $\mathbb{C}^{n+1},\,n>1,$ there exist iso\-lated hyper\-surface 
singula\-rities  
which are
not topologically equivalent to a 
singularity with a real defining equation \cite{Teissier}.
We expect that in general the geometric monodromy $T$ 
of a complex hypersurface singularity fails to be strongly invertible.  

Mathias Schulze  communicated to the first author a proof of strong 
invertibility of  the  homological monodromy with real coefficients (this was 
never published and stayed as a personal communication).
The following is a strengthening of his
result to the case of homology with rational coefficients.

\begin{theorem}\label{thm:hom_strong}
	The rational homological monodromy of a complex hypersurface singularity 
	is strongly invertible. 
\end{theorem}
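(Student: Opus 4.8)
The statement to prove is that the rational homological monodromy of any complex hypersurface singularity is strongly invertible, i.e. conjugate in $\mathrm{GL}(H_n(F;\mathbb{Q}))$ by an involution to its inverse. The natural approach is to reduce everything to the Milnor lattice together with the Seifert/intersection form and to exploit the fact that, on homology, the monodromy is a product of reflections (Picard–Lefschetz) in a distinguished basis of vanishing cycles.

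**First step: reduce to a numerical/linear-algebra statement.** I would first recall that, over $\mathbb{Q}$, the monodromy operator $h_*$ on $H_n(F;\mathbb{Q})$ is determined by the variation structure and, via a distinguished basis $e_1,\dots,e_\mu$ of vanishing cycles, is the product of the Picard–Lefschetz reflections $s_{e_i}$ in the (symmetrized) intersection form, taken in order: $h_* = s_{e_\mu}\circ\cdots\circ s_{e_1}$. Each $s_{e_i}$ is an involution. So the problem becomes: show that a product of involutions $s_\mu\cdots s_1$ in an orthogonal/symplectic group over $\mathbb{Q}$ is conjugate by a single involution to its inverse $s_1\cdots s_\mu$. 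In the symmetric (even $n$) case the intersection form is nondegenerate on a suitable quotient and the $s_{e_i}$ are genuine reflections; in the skew case (odd $n$) the ``reflections'' are transvections and one must work with the Seifert form instead — this is the place where the two parities genuinely diverge and I would treat them separately.

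**Second step: the reversal involution.** The core trick, exactly as in the strongly-invertible-knot picture recalled just above in the excerpt, is that reversing the order of a product of involutions produces the inverse: $(s_\mu\cdots s_1)^{-1} = s_1\cdots s_\mu$, and $s_1\cdots s_\mu$ is conjugate to $s_\mu\cdots s_1$ by the ``reversal'' map when such a map exists as an orthogonal (resp. Seifert-form-preserving) involution. Concretely, one looks for a $\mathbb{Q}$-linear involution $J$ with $J s_{e_i} J^{-1} = s_{e_{\sigma(i)}}$ for the order-reversing permutation $\sigma$, i.e. $J$ permutes the vanishing cycles $e_i \mapsto \pm e_{\mu+1-i}$ compatibly with the form. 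For a singularity with a \emph{real} defining equation, complex conjugation $c_F$ supplies exactly such a $J$ — this is the content of the Lemma and Corollary already proved above, and over $\mathbb{R}$ (Schulze's unpublished argument) one is done. The new content is to remove the reality hypothesis and to descend from $\mathbb{R}$ to $\mathbb{Q}$.

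**Third step and main obstacle.** The hard part is the passage from $\mathbb{R}$ (or $\mathbb{C}$) to $\mathbb{Q}$. One knows abstractly that the monodromy matrix, the intersection matrix and the Seifert matrix are all \emph{defined over $\mathbb{Z}$} in a distinguished basis; so the statement ``$h_*$ is conjugate to $h_*^{-1}$ by an involution'' is a priori a question about $\mathrm{GL}_\mu(\mathbb{Q})$. The plan is: (i) prove the conjugacy holds over $\mathbb{R}$ using some real model of the topological type — here one may invoke the theory of Puiseux pairs for curves, but in higher dimensions one cannot assume a real equation, so instead I would argue directly at the level of the \emph{Seifert form}: two bilinear forms $V$ and its transpose $V^t$ represent conjugate monodromies $h = (-1)^{n+1}V^{-1}V^t$ and $h^{-1}=(-1)^{n+1}(V^t)^{-1}V$, and $V \leftrightarrow V^t$ is realized by an involution; (ii) show this involution can be chosen rational. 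Step (ii) is the genuine obstacle: a conjugating element between two $\mathbb{Q}$-conjugate matrices always exists over $\mathbb{Q}$ (rational canonical form), but an \emph{involutory} conjugator need not descend automatically — one needs a cohomological/Galois-descent argument, or an explicit construction. I would try the explicit route: build $J$ on the Seifert lattice as ``transpose in the distinguished basis'' post-composed with a correction in the kernel of $V+(-1)^nV^t$, check $J^2=\mathrm{Id}$ and $J h J^{-1}=h^{-1}$ by direct matrix manipulation, and verify rationality is automatic because everything is written in the integral distinguished basis. If the explicit $J$ fails to be an involution on the nose, the fallback is to show any two involutory conjugators differ by an element of the centralizer of $h$, and that the relevant obstruction in $H^1$ of the centralizer vanishes over $\mathbb{Q}$ because the centralizer is a product of restriction-of-scalars groups with trivial relevant Galois cohomology — this is where I expect most of the technical work to sit.
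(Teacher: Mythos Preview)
Your plan has a genuine gap, and it stems from looking for the involution in the wrong place. You try to build a ``reversal'' involution $J$ that permutes the vanishing cycles $e_i\mapsto \pm e_{\mu+1-i}$; but for a general hypersurface singularity there is no reason for the Seifert (or intersection) matrix in a distinguished basis to have any such palindromic symmetry, so the first explicit candidate simply does not exist. Your fallback --- ``transpose in the distinguished basis plus a correction'', then a Galois--cohomological argument in the centralizer of $h$ --- is not wrong in spirit, but it is vague at exactly the point that matters: you never identify a concrete rational involution, and the $H^1$ vanishing you invoke is asserted, not proved. As written the argument does not close.

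The paper's proof takes a completely different, and much shorter, route that you are missing: it uses \emph{only} the Monodromy Theorem (the eigenvalues of $T_*$ are roots of unity) and nothing else from singularity theory --- no Seifert form, no Picard--Lefschetz, no real model. Over $\mathbb{Q}$ one decomposes $H_n(F;\mathbb{Q})$ into $T_*$-indecomposable summands $E_i$; on each, the minimal polynomial is a power $\psi=\phi^L$ of a cyclotomic polynomial, and the crucial fact is the palindromic identity $\psi(1/t)=\pm t^{\deg\psi}\psi(t)$. Choosing a cyclic vector $e_1\in E_i$, one has two bases $(e_1,A_ie_1,\dots)$ and $(e_1,A_i^{-1}e_1,\dots)$, and the linear map $b_i$ sending one to the other satisfies $b_iA_ib_i^{-1}=A_i^{-1}$; the palindromic identity is precisely what makes $b_i^2=\mathrm{Id}$. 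Summing the $b_i$ gives the rational strong inversion. In short: the theorem is really a statement about \emph{any} $\mathbb{Q}$-linear operator whose characteristic polynomial is a product of cyclotomic polynomials, and the proof is pure linear algebra on the rational canonical form. I would abandon the Seifert-form/descent approach and redo the argument along these lines.
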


\begin{proof}
	Let
	$E=\bigoplus_i E_i$ be a finest possible direct sum 
	decomposition in $T_*$-invariant 
	$\mathbb{Q}$-subspaces of $E$. The characteristic polynomial of the 
	restriction
	$A_i:E_i \to E_i$ 
	of $T_*$ to a summand $E_i$ is a power $\psi(t)=\phi(t)^L$ of a cyclotomic 
	polynomial $\phi(t)$. We have $\psi(A_i)=0$ by the Hamilton-Cayley 
	theorem and we have $\psi(A_i^{-1})=0$ since a power of cyclotomic 
	polynomial
	satisfies $\psi(1/t)=\pm t^{{\rm degree}(\psi)} \psi(t)$. 
	
	Since the decomposition has no refinement, we may
	choose a  vector $e_1 \in E_i$,  that is 
	cyclic for $A_i$ and for $A_i^{-1}$. 
	The systems
	$$
	e_1,e_2:=A_i(e_1), \cdots ,e_{\dim E_i}:=A^{\dim E_i -1}(e_1),
	$$
	$$
	f_1:=e_1,f_2:=A_i^{-1}(e_1), \cdots ,f_{\dim E_i}:=A^{-\dim E_i +1}(e_1)
	$$ 
	are basis
	for the space $E_i$. Let $b_i:E_i \to E_i$ be the linear map defined by
	$b_i(e_j)=f_j, 1 \leq j \leq \dim E_i$. 
	
	We have
	$A_i^{-1}b_i(e_j)=b_iA_i(e_j), 1 \leq j < \dim E_i$. The polynomial 
	$\psi_1(t):=-\psi(t)+t^{\dim E_i}$ satisfies 
	$\psi_1(A_i^{\pm 1})=A_i^{\pm \dim E_i}$. Since the degree of 
	the polynomial
	$\psi_1(t)$  is less than $\dim E_i$, we have
	$$
	A_i^{-1}b_i(e_{\dim E_i})=A^{-\dim E_i}b_i(e_1)=\psi_1(A_i^{-1})b_i(e_1)=
	$$
	$$
	b_i\psi_1(A_i)(e_1)=b_iA_i^{\dim E_i}(e_1)=b_iA_i(e_{\dim E_i}).
	$$
	We conclude that $A_i^{-1}b_i=b_iA_i$ and $A_i^{-1}=b_iA_ib_i^{-1}$ hold.
	
	The polynomial
	$\psi_0(t):={ {\psi(t)-\psi(0)} \over {-t\psi(0)} }$ satisfies
	$A_i^{-1}=\psi_0(A_i)$ and $A_i=\psi_0(A_i^{-1})$. 
	We deduce $b_iA_i^{-1}=b_i\psi_0(A_i)=\psi_0(A_i^{-1})b_i=A_ib_i$ and 
	conclude $A_i^{-1}=b_i^{-1}A_ib_i$. We observe at this point that both 
	the conjugates of $A_i$ by $b_i$ and by
	$b_i^{-1}$ are equal to the inverse $A_i^{-1}$.
	
	For $0 \leq j < \dim E_i$ we have (remember $e_1=f_1=b_i(e_1)=b_i(f_1)$)
	$$
	b_i(f_{j+1})=b_iA_i^{-j}(f_1)=A_i{^j}b_i(f_1)=A_i{^j}(e_1)=e_{j+1}.
	$$
	Hence
	$b_i$ is of order two, which shows that $A_i$ is strongly invertible over
	$\mathbb{Q}$. The direct sum $b:=\bigoplus b_i$ is a rational strong 
	inversion  
	for 
	$T_*$. 
\end{proof}

{\bf Question:} Is the integral homological monodromy of a complex hypersurface 
singularity
strongly invertible? For example, take the isolated surface 
singularity
\[f(x,y,z)=L_1 L_2^2 L_3^3 L_4^4 L_5^5 L_6^6 L_7^7 L_8^8 L_9^9 
+x^{46}+y^{46}+z^{46}\]
where $L_j, 1 \leq j \leq 9$ are linear forms on $\mathbb{C}^3$, such that the 
$9$ 
lines $\{L_j=0\}$ in the complex 
projective plane
span the $9_3$ configuration of flex tangents to a nonsingular cubic. No real 
equation for this singularity
can exist, since the configuration $9_3$ cannot be realized in the real 
projective plane, see \cite{Teissier,Hil_Cohn}. Is its monodromy strongly 
invertible?

\subsubsection*{Going back to plane curves and divides}

Now we focus on the study  of complex
conjugation on the topology of plane curve singularities via divides. First we 
show that the fiber of the link of a
connected divide carries naturally a cellular decomposition with tri-valent
$1$-skeleton.

Let $f:\mathbb{C}^2 \to \mathbb{C}$ define an isolated plane 
curve singularity  at $0 \in \mathbb{C}^2$ given by a real convergent power 
series  
$f\in \mathbb{R}\{x,y\}$. Let $f=f_1f_2 \cdots f_r$ be the factorization in
local branches.  The topology of a plane curve singularity $f$ is completely 
encoded in 
a divide for $f$ as we have seen in Sect.~\ref{s:fibration_thm} or in the 
previous 
section where we have given a factorization of the monodromy from the divide. 
(c.f. 
\cite{Acampo_real_def,Balk_Kaend}), see also 
\cite{Acampo_groupe_I,Acampo_groupe_II,GZ_inter_two}. 

We state the results  more generally for links of 
connected divides, since the involution given by complex conjugation 
on real isolated plane curve singularities 
corresponds to a natural involution of
links of divides as explained below, (c.f.
\cite{Acampo_generic_imm,Acampo_real_def}).

Denote by  $\delta_P$ the number of double 
points of the divide $P$.
The local topology of a plane curve singularity is obtained from
a divide for the singularity.  More precisely, 
the link $L_f \subset \partial{B_{\epsilon}}$ of an isolated 
plane curve singularity
$\{f=0\}, f \in \mathbb{R}\{x,y\},$
is equivalent to the link  $L_P$ of a divide $P$, see 
Sect.~\ref{s:divides_plane_curves} (c.f. \cite{Acampo_real_def}).  

We recall, 
that for a totally real (Definition~\ref{def:totally_real}) plane curve 
singularity $f$ 
one can obtain
a divide $P$ by performing 
a small real deformation $f_s, 0 \leq s \leq 1,$ of the 
singularity, where for $0 < s \leq 1$ the restriction of $f_s$ to 
the euclidean disk $D_{\epsilon}:=B_{\epsilon} \cap \mathbb{R}^2$ 
of radius $\epsilon$ in $\mathbb{R}^2$ is a Morse function with 
$\mu(f)$ critical points and such
that the $0$-level is connected and contains all the saddle points (this is the 
content of Theorem~\ref{thm:existence_divides}). 
The divide $P$ for $f$ is the curve $\{ p \in D_{\epsilon} \mid f_1(p)=0 \}$, 
which we rescale by the factor ${1 \over \epsilon}$ from 
$D_{\epsilon}$ into the 
unit disk $D$.

\begin{figure}
	\centering \includegraphics*[scale=0.4]{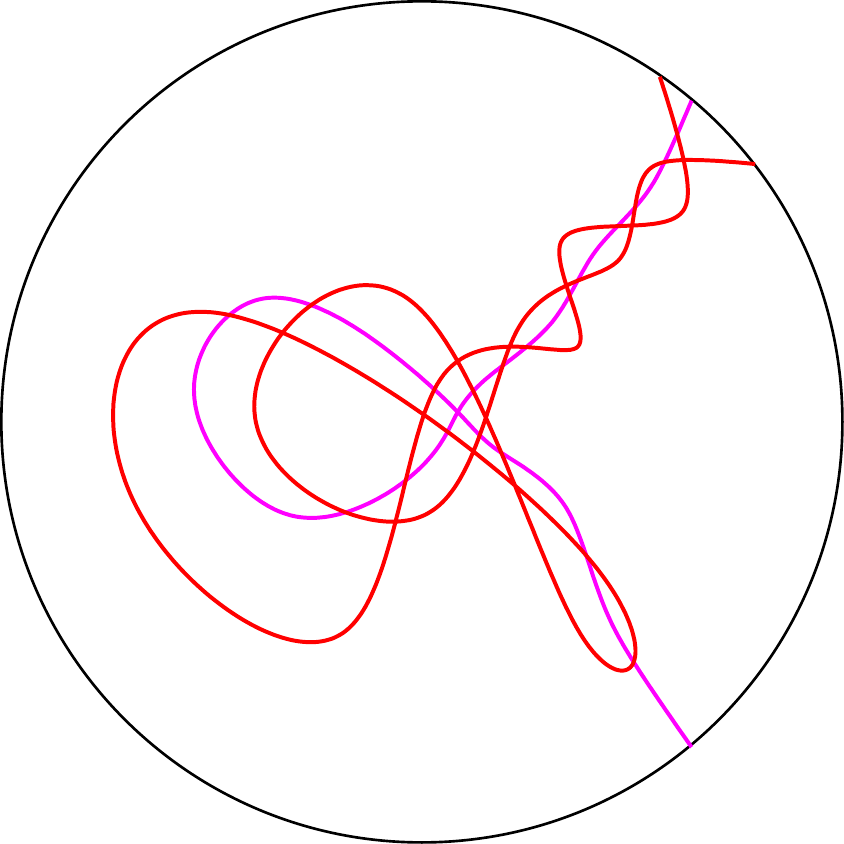}
	\caption{The divide corresponding to the singularity $(x^3 - y^2)(( x^3 - 
		y^2 )^2 - 
		4x^8y)$.}
\end{figure}

The homology $H_1(F_P,\mathbb{Z})$ can be described combinatorially in terms of 
the 
divide $P$ as a direct sum 
$H_1(F_P,\mathbb{Z})=E_- \bigoplus E_0 \bigoplus  E_+$, where
$E_-,E_0$ and $E_+$ are the subspaces in 
$H_1(F,\mathbb{Z})$, which are freely generated  
as follows: 
\[
\begin{split}
	&E_+ :=[\delta_1, \cdots ,\delta_{\mu_+}],
	\\
	&E_0 \,:=[\delta_{\mu_++1}, \cdots ,\delta_{\mu_+ + \mu_{0} }], \quad 
	\text{ 
		and}
	\\
	&E_- :=[\delta_{\mu_++\mu_{0}+1}, \cdots , \delta_{\mu_-+\mu_{0}+\mu_+}],
\end{split}
\]
where $(\delta_i)_{ 1 \leq i \leq \mu}$ is 
the oriented system of vanishing cycles of the 
divide $P$ with positive upper triangular Seifert form 
\[
S:H_1(F,\mathbb{Z}) \to H^1(F,\mathbb{Z})={\rm 
	Hom}(H_1(F,\mathbb{Z}),\mathbb{Z}),
\] 
see. We define $N:=S - {\rm Id}$, 
which is 
upper-triangular nilpotent matrix. The monodromy
$T_*:H_1(F,\mathbb{Z}) \to H_1(F,\mathbb{Z})$ is given by:
\[
T_*=(S^t)^{-1} \circ S.
\]

For the rest of this section, recall the construction of the 
fiber 
surface  $F_P$ of a divide $P$  in the unit euclidian 
disk $D$. In particular, recall the notation from Sect.~\ref{s:definitions} and 
Sect.~\ref{s:divides_plane_curves}.

\begin{figure}
	\centering \includegraphics*[scale=0.3]{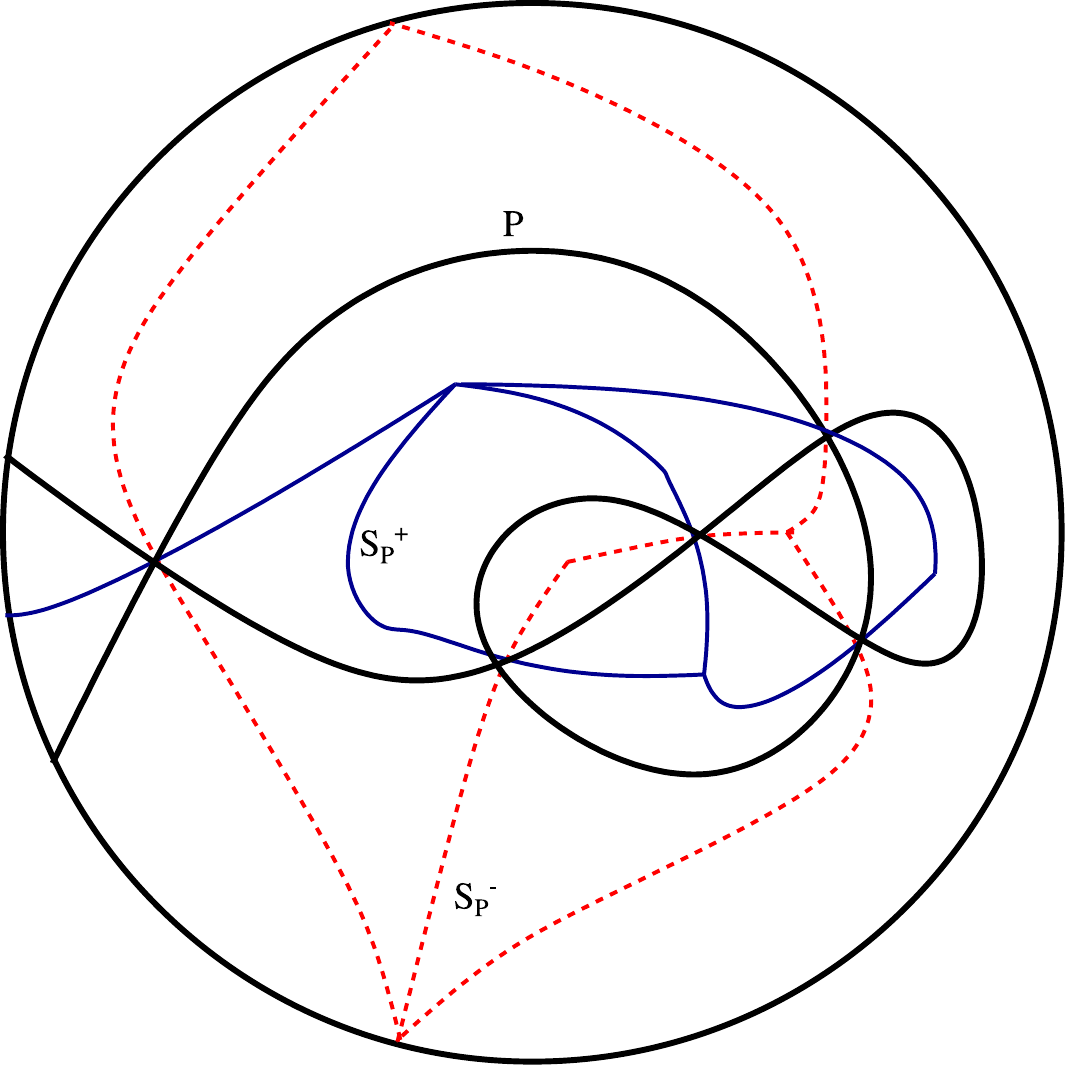}
	\caption{ Divide and graph of divide.}
\end{figure}

Let $S_P$ be a system
of gradient lines of $f$, which connect saddle points of $f$ with  
local or relative maxima or minima of $f$. In fact, near the boundary of 
$D$ the lines of the system $S_P$ are only gradient-like, but end in a 
relative critical point of $f$. We call the  
set $S_P$ the graph of the divide $P$. By a slalom 
construction, see \cite{Acampo_slalom} (not included in this work), one can 
reconstruct
from each of the sets $S_P^{\pm}:=\{x \in S_P \mid \pm f(x) \geq 0 \}$ 
the divide $P$. We assume that at the 
maxima and minima of $f$, that different
arcs of $S_P$ have different unoriented tangent directions. Let $\Sigma_P$
be the following subset of the fiber $F_P$ over $1$ of the natural fibration
over $S^1$ of the complement of the link of $P$: the subset $\Sigma_P$ is
the closure in the tangent space $TD$ of $D$ of
$$
\Sigma'_P:=\{(x,u) \in TD \mid x \in S_P, f(x)> 0,
(df)_x(u)=0, \|x\|^2+\|u\|^2=1 \}.
$$

\noindent
We recall that the union of fiber $F_P$ with the link $L_P$
is the closure in $TD$ of
$$
F'_P:=\{(x,u) \in TD \mid x \in D, f(x) > 0,
(df)_x(u)=0, \|x\|^2+\|u\|^2=1 \},
$$
\noindent
which is a surface of genus $g(P)$ equal to the number of double points in $P$.

\begin{figure}
	\centering \includegraphics*[scale=0.6]{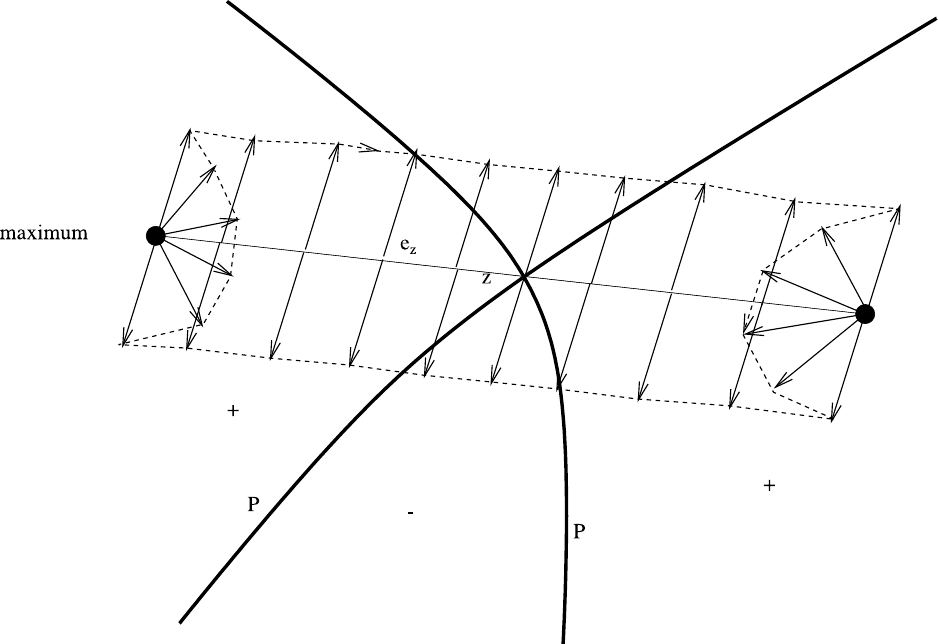}
	\caption{ Vanishing cycle $\delta_z$ for a saddle point.}
	\label{fig:vanishing_saddle}
\end{figure}

The system of cycles $\delta_i, 1 \leq i \leq \mu,$ can be drawn on 
$\Sigma_P$ as follows. The vanishing cycle 
$\delta_i, \mu_-+\mu_0 < i \leq \mu,$
which corresponds to a local maximum $M$ of $f$ is the circle
$\{ (M,u) \mid u \in T_MD, \|M\|^2+\|u\|^2=1\} $ oriented counter clock-wise.
The vanishing cycle $\delta_i, \mu_- < i \leq \mu_-+\mu_0,$ which 
corresponds to a saddle point $z$ of $f$ is a curve in the set
\[E_z:=\{(x,u) \in TD \mid x \in e_z,
u\in {\rm Ker}(df)_{e_z}, \|x\|^2+\|u\|^2=1 \},\] as drawn in 
Fig.~\ref{fig:vanishing_saddle}. 
The curve $\delta_i$ is a piecewise smooth embedded  
copy of $S^1$ in $F_P$ with image in $E_z$
and with non-constant projection to the disk $D$. The orientation 
is chosen such that at both ends the orientation agrees with the oriented
vanishing cycle of the maximum. Moreover, the inward tangent vectors
$(x,u)$ to $e_z$ at end points $x \in e_z$ do  belong to $\delta_i$. 
The vanishing cycle $\delta_i, 1 \leq  i \leq \mu_-,$ which 
corresponds to a local minimum of $f$ projects to an oriented circuit
$e_1,e_2, \cdots ,e_k$ of edges of the graph $S_P$. The circuit 
surrounds the region in counter clock-sense 
to which 
the minimum corresponds. The circuit is a polygon and bounds a cell in $D$.
The curve $\delta_i$ is a subset of
$\{(x,u) \in TD \mid x \in \cup_{1 \leq j \leq k}  e_j, u\in {\rm 
	Ker}(df)_{x}, \|x\|^2+\|u\|^2=1 \}$
and is the image of a piecewise smooth  embedding of 
$S^1$ as drawn in Fig.~\ref{fig:vanishing_min}. The vectors
$(x,u)$, which belong to $\delta_i$, point out of the cell of the circuit. 
Smooth representatives for the system of \index{geometric vanishing cycle} 
vanishing cycles can be obtained 
using {\em tears} as in \cite{Acampo_quadratic} (see also 
Sect.~\ref{ss:visual_vanishing}).

\begin{figure}
	\centering \includegraphics*[scale=0.7]{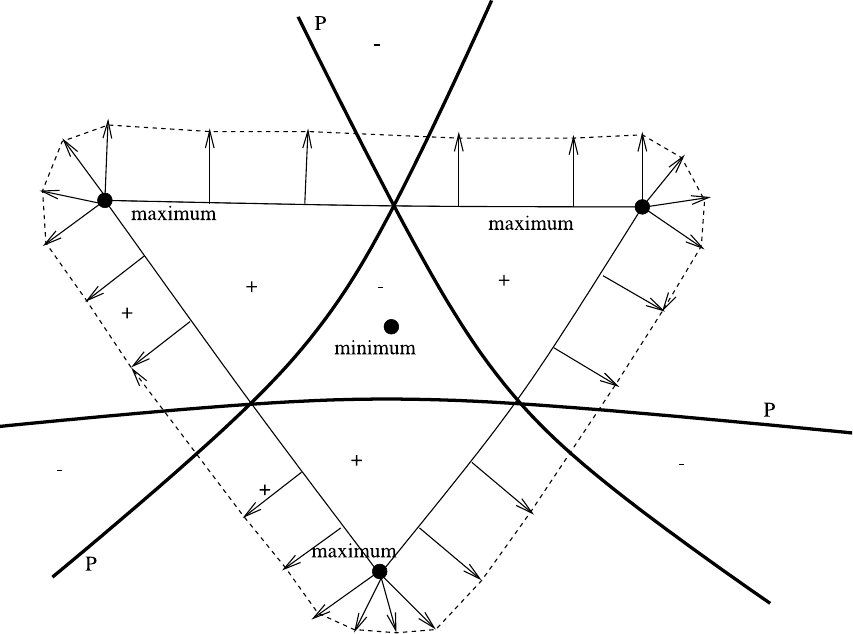}
	\caption{Vanishing cycle $\delta_i$ for a minimum.}
	\label{fig:vanishing_min}
\end{figure}

The involution
$c_{T\mathbb{R}^2}:(x,u)\in T\mathbb{R}^2 \mapsto (x,-u)\in T\mathbb{R}^2$
induces an involution
$c_{S^3}$ on $S^3\subset T\mathbb{R}^2$ that preserves the link $L_P$
of any divide $P$ and that
induces involutions on the fiber surfaces above $\pm 1$.
If the divide $P$ is a divide for a real plane curve singularity $f$, the
involutions induced by $c_{T\mathbb{R}^2}$ or by the complex conjugation $c$ on
the triples $(S^3,L_P,F_P)$ and on $(\partial{B_{\epsilon}},L_f,F)$ 
correspond to
each other by the homeomorphism of pairs of the main theorem of (c.f. 
\cite{Acampo_real_def}).

\begin{theorem} Let $P \subset D$ be a connected divide consisting of the 
	image of a relative generic immersion of a compact one dimensional 
	manifold   
	in the unit disk 
	$D \subset \mathbb{R}^2$. Let $E_-,E_0$ and $E_+$ be the summands in 
	$H_1(F,\mathbb{Z})$ as above. 
	The involution $c_{F*}:H_1(F,\mathbb{Z}) \to H_1(F,\mathbb{Z})$ fixes 
	point-wise
	the summand $E_+$, in particular for $\delta_i \in E_+$ we have
	$$
	c_{F*}(\delta_i)=\delta_i.
	$$
	For $\delta_i \in E_0$ we have
	$$
	c_{F*}(\delta_i)= -\delta_i+
	\sum_{1\leq j \leq \mu_+} \langle N(\delta_j),\delta_i \rangle.
	$$
	For $\delta_i \in E_-$ we have
	$$
	c_{F*}(\delta_i)= \delta_i+
	\sum_{1\leq j \leq \mu_+} \langle N(\delta_j),\delta_i \rangle-
	\sum_{\mu_+ < j \leq \mu_++\mu_0} 
	\langle N(\delta_j), \delta_i \rangle \delta_j.
	$$
	The trace  
	of the involution $c_{F*}:H_1(F,\mathbb{Z}) \to H_1(F,\mathbb{Z})$
	is given by: 
	$$
	{\rm Trace}(c_{F*})=\mu_- - \mu_0 + \mu_+.
	$$
\end{theorem}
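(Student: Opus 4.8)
The plan is to compute the action of $c_{F*}$ on the distinguished basis of vanishing cycles directly from the geometric description of the involution $c_{S^3}:(x,u)\mapsto(x,-u)$, using the combinatorial model of $F_P$ and its cellular decomposition with trivalent $1$-skeleton described above. The key observation is that $c_{S^3}$ sends a vanishing cycle $\delta_i$, whose representative lives in $F'_P$ as a system of unit tangent vectors $(x,u)$ with $(df)_x(u)=0$, to the cycle obtained by negating all those tangent vectors; geometrically this reflects the vanishing cycle through the zero section. For a vanishing cycle coming from a maximum $M$ (i.e.\ $\delta_i\in E_+$), the representative is the full fiber circle $\{(M,u)\mid \|M\|^2+\|u\|^2=1\}$, which is $c_{S^3}$-invariant as a set, and the orientation (counter-clockwise in $T_MD$ with the complex orientation) is preserved since $u\mapsto -u$ is orientation-preserving on the $2$-plane $T_MD$; hence $c_{F*}(\delta_i)=\delta_i$. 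This is the base case and the easiest step.

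For the cycles in $E_0$ and $E_-$ the reflected cycle $c_{S^3}(\delta_i)$ is no longer literally one of the basis cycles, and the work is to express its homology class in terms of the $\delta_j$. I would do this by isotoping $c_{S^3}(\delta_i)$ back into standard position: on each edge $e_z$ (for a saddle) or each edge-circuit (for a minimum) the original $\delta_i$ runs ``along one side'' using inward tangent vectors at the endpoints, while its reflection runs ``along the other side'' using outward tangent vectors. Sliding the reflected arc across the fiber circle $F_M$ at each maximal endpoint $M$ picks up a copy of $\delta_M$ with a sign determined by the local intersection pattern; sliding it past a saddle cycle $Z_s$ picks up a copy of the corresponding $E_0$-generator. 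Tracking these slides is exactly a computation with the Seifert form $S$ and $N=S-\mathrm{Id}$: the number of times one must cross $F_M$ (respectively $Z_s$) to return to standard position is $\langle N(\delta_j),\delta_i\rangle$ for the appropriate $j$, and the overall sign $-\delta_i+\cdots$ for $E_0$ versus $+\delta_i+\cdots$ for $E_-$ reflects that reversing a minimum-type circuit reverses its orientation while a saddle-type cycle is flipped end-to-end. Concretely I would verify the three formulas first on the simplest divides ($A_n$, $D_n$), where $S$ is explicitly the standard upper-triangular incidence matrix, and then observe that the geometric slide argument is local and hence the coefficients depend only on the entries of $N$ as claimed.

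Granting the three formulas for the basis elements, the trace computation is immediate: from $c_{F*}(\delta_i)=\delta_i$ on $E_+$ we get a contribution $+\mu_+$; from $c_{F*}(\delta_i)=-\delta_i+\sum_{j}\langle N(\delta_j),\delta_i\rangle\,(\text{terms in }E_+)$ the diagonal entry in the $E_0$-block is $-1$, contributing $-\mu_0$ (the $E_+$-terms are off-diagonal relative to $E_0$ and do not affect the trace); and from the $E_-$ formula the diagonal entry is $+1$ since the only self-term is $+\delta_i$ (the correction sums involve $\delta_j$ with $j\le\mu_+$ or $\mu_+<j\le\mu_++\mu_0$, i.e.\ strictly outside $E_-$), contributing $+\mu_-$. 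Summing gives $\mathrm{Trace}(c_{F*})=\mu_--\mu_0+\mu_+$. The main obstacle is the middle step: making the ``slide across $F_M$ and $Z_s$'' argument precise enough to read off the exact coefficients $\langle N(\delta_j),\delta_i\rangle$ with the correct signs, since this requires a careful bookkeeping of orientations in the trivalent cellular model of $F_P$ and a clean identification of the relevant intersection numbers with entries of the Seifert form. I expect this to be where most of the genuine content lies; the base case and the trace count are formal consequences.
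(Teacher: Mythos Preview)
Your plan is essentially the paper's proof: compute $c_{F*}$ directly on the explicit geometric representatives of the $\delta_i$ in the combinatorial model of $F_P$, using that $(x,u)\mapsto(x,-u)$ is the antipodal map on each fiber circle $F_M$ and swaps inward with outward vectors along the tear and circuit models. The paper is equally terse on the $E_0$ and $E_-$ cases, simply invoking the tear model and reading off the correction coefficients as the multiplicity $m_{i,j}\in\{1,2\}$ of the gradient-line connection from the saddle (or minimum) to the neighboring maxima, then identifying these with $\langle N(\delta_j),\delta_i\rangle$; the trace computation is exactly your upper-triangular argument.

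One point to straighten out before you write it up: your heuristic for the leading signs is inverted. For a saddle cycle $\delta_c$ the tear model uses \emph{both} lifts of the arc $e_z$ (vectors $\pm u$ in the $1$-dimensional kernel), so $u\mapsto -u$ sends $\delta_c$ to itself as a set but reverses the direction of travel, giving the $-\delta_i$; the inward$\to$outward switch occurs only at the endpoint maxima and produces the $\delta_M$-corrections. For a minimum cycle $\delta_m$ the circuit is traversed in the \emph{same} sense after the involution (only the transverse vectors flip from outward to inward), so after sliding across the adjacent $F_M$'s and saddle cycles you recover $+\delta_i$ plus corrections. This does not affect the method, only the verbal justification of which case gets which sign.
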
  

\begin{proof} A vanishing cycle $\delta_i \in E_+$ corresponds 
	to a maximum $M \in D$ of $f$,
	hence as set we have $\delta_i=\{(M,u) \in TD \mid \|M\|^2+\|u\|^2=1\}$.
	The
	involution $(x,u) \mapsto (x,-u)$ induces on $\delta_i$ the antipodal map,
	which is orientation preserving. It follows $c_{F*}(\delta_i)=\delta_i$ in
	homology.  
	A vanishing cycle $\delta_i \in E_0$ corresponds to a saddle point 
	$z \in D$ of $f$. Working with the tear model  of (c.f. 
	\cite{Acampo_quadratic}), see also (the independent) 
	Sect.~\ref{ss:visual_vanishing},
	we see that the involution reverses the orientation and that at
	the endpoints, which are maxima of $f$, we have outward instead of 
	inward vectors. Hence, 
	$$
	c_{F*}(\delta_i)=-\delta_i+\sum m_{i,j} \delta_j,
	$$ 
	where in the sum
	$j$ runs through the maxima of $f$ in the interior of $D$ which are 
	connected by gradient 
	lines of $S_P$ to the saddle point $i$. The coefficient $m_{i,j}$ equals 
	$1$ or $2$ 
	depending on whether the connection by gradient lines is simple or double.
	Finally one gets 
	$c_{F*}(\delta_i)= -\delta_i+
	\sum_{1\leq j \leq \mu_+} \langle N(\delta_j),\delta_i \rangle$. 
	For a vanishing cycle $\delta_i \in E_-$
	one can work with the model of (c.f. \cite{Acampo_quadratic}) and get 
	$c_{F*}(\delta_i)= \delta_i+
	\sum_{1\leq j \leq \mu_+} \langle N(\delta_j),\delta_i \rangle-
	\sum_{\mu_+ < j \leq \mu_+ \mu_0} \langle N(\delta_j),\delta_i \rangle$. 
	Since $N$ is strictly upper-triangular, the matrix of $c_{F*}$ is 
	upper-triangular with $\pm 1$ on the diagonal. We get 
	${\rm Trace}(c_{F*})=\mu_- - \mu_0 + \mu_+$.
\end{proof}

We can present the Seifert form and homological 
monodromy with block-matrices $S,T$ as in \cite{GZ_Index}. On 
$H_1(F,\mathbb{Z}),E_-,E_0,E_+$ 
and 
$H^1(F,\mathbb{Z})$ we work with the basis or dual basis given by the system 
$\delta_i, 1 \leq i \leq \mu$. 
The matrix in block form of the Seifert form is
$$
S=\left(
\begin{matrix}{\rm Id}_{\mu_+}& A & G\cr O& {\rm Id}_{\mu_0}& B\cr
	O & O & {\rm Id}_{\mu_-}\end{matrix}
\right)  
$$
where the block $G$ equals the block matrix product $1/2 (A \circ B)$. 
The matrix coefficients of $A \circ B$ and $G$ have interpretations 
in terms of the divide $P$ or in terms of the Morse function 
$f_P$ on the disk $D$. 
The matrix coefficient 
$(A \circ B)_{(i,j)}, 1 \leq i \leq \mu_+, \mu_+ + \mu_0 < j \leq \mu,$ 
counts the number of  sector adjacencies, that has the 
$+$-region $i$ of $P$  with the $-$-region $j$, while the 
coefficient $G_{(i,j)}$ counts the number of 
common boundary 
segments of the regions $i$ and $j$. The coefficient $(A \circ B)_{(i,j)}$
also counts the number of 
saddle connections via gradient lines of $f_P$ in between
the minimum $j$ and maximum $i$, while the coefficient $G_{(i,j)}$
counts the number of components of regular gradient line connections from
the minimum $j$ to the maximum $i$.
This explains the above factor $1/2$, since a segment of 
$P$ is twice incident 
with a saddle point of $f_P$. 

The matrix of the action of complex conjugation on $H_1(F,\mathbb{Z})$ is
$$
C=\left(\begin{matrix}{\rm Id}_{\mu_+}& A & G \cr 
	O& {\rm -Id}_{\mu_0}& -B\cr
	O & O & {\rm Id}_{\mu_-}\end{matrix}\right)  
$$
and is obtained from the matrix $S$ by multiplying the middle row of blocks by 
$-1$.
It is interesting to compute the matrix of the involution $T\circ c_F$ 
on $H_1(F,\mathbb{Z})$
$$
TC=\left(\begin{matrix}{\rm Id}_{\mu_+}& O & O\cr 
	-{^tA}& {\rm -Id}_{\mu_0}& O \cr
	{^tG} & {^tB} & {\rm Id}_{\mu_-}\end{matrix}\right)  
$$
The matrix $T\circ C$ is the transgradient of the matrix of $C$. 

It turns out that 
the combinatorial property $G=1/2 (A \circ B)$ for divides 
is equivalent to 
$C\circ C={\rm Id}_{\mu}$ or $T\circ C\circ T\circ C={\rm Id}_{\mu}$.

For an isolated  plane curve singularity at $0 \in \mathbb{C}^2$, which is 
given by a real 
equation $\{f=0\}, f \in \mathbb{R}\{x,y\}, f(0)=0,$ we will 
denote by $\delta_{\mathbb{R}}(f)$ the number of 
double points of a divide $P$ 
for the singularity.  Hence $\delta_{\mathbb{R}}(f)$ is 
the maximal number of local
real 
saddle points in some level, that can occur for  
a small real deformation of $f$.
Observe, that one has 
$\delta_{\mathbb{R}}(f) \leq \delta(f)$, where $\delta(f)$ 
is the maximal number of local
critical points in some level, that can occur for  
a small deformation of $f$. We recall the formula
$\mu(f)=2 \delta(f)-r+1$ of Milnor \cite{Milnor}. As example for 
$f=x^4+Kx^2y^2+y^4, -2 \not= K \not= 2,$ one has
$\delta_{\mathbb{R}}(f)=4, \delta(f)=6, r=4$ and 
very surprisingly, Callahan shows 
that
for $-2 < K < 2$ there exists a small real deformation with
$5$ local minima in the same level \cite{Calla}.

\begin{theorem} For an isolated  plane curve singularity at 
	$0 \in \mathbb{C}^2$, which is  given by a real 
	equation $\{f=0\}, f \in \mathbb{R}\{x,y\}, f(0)=0,$ we have
	$$
	\mu(f)=2 \delta_{\mathbb{R}}(f)+{\rm Trace}(c_{F*})
	$$
\end{theorem}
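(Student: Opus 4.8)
The plan is to read off both sides of the identity from a divide $P$ for $f$ and then match them against the two facts already established in this section: the direct sum decomposition $H_1(F,\mathbb{Z})=E_+\oplus E_0\oplus E_-$ and the trace formula $\mathrm{Trace}(c_{F*})=\mu_--\mu_0+\mu_+$ of the theorem just above. First I would fix a small real deformation $f_s$ of $f$ of the type discussed here, so that its zero level inside the real Milnor disk is a divide $P$ for the singularity and all $\mu$ critical points of the deformation are real with the saddles lying over $0$; for $f$ totally real this is exactly Theorem~\ref{thm:existence_divides} (with $\delta_{\mathbb{R}}(f)=\delta(f)$ by Lemma~\ref{lem:generic_perturbation}), and in general one invokes the real deformation discussed in this section which realizes the maximal number $\delta_{\mathbb{R}}(f)$ of real saddles in a level. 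Under the homeomorphism of pairs identifying $(\partial B_\epsilon,L_f,F)$ with $(S^3_P,L_P,F_P)$ compatibly with complex conjugation (the main theorem of \cite{Acampo_real_def}), the involution $c_F$ becomes the divide involution $(x,u)\mapsto(x,-u)$, so $\mathrm{Trace}(c_{F*})$ is precisely the quantity computed in the preceding theorem.

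Next I would just add up dimensions. By construction the distinguished system $\delta_1,\dots,\delta_\mu$ of vanishing cycles of $P$ splits into $\mu_+$ cycles supported at the interior maxima of $f_P$, $\mu_0$ cycles supported at the saddle points of $f_P$, and $\mu_-$ cycles supported at the interior minima; since the saddle points of $f_P$ are exactly the double points of $P$, we get $\mu_0=\delta_P=\delta_{\mathbb{R}}(f)$, and since the sum is direct, $\mu=\mu_++\mu_0+\mu_-$. Combining this with $\mathrm{Trace}(c_{F*})=\mu_--\mu_0+\mu_+$ yields
\[
\mu-\mathrm{Trace}(c_{F*})=(\mu_++\mu_0+\mu_-)-(\mu_+-\mu_0+\mu_-)=2\mu_0=2\,\delta_{\mathbb{R}}(f),
\]
which is the asserted formula.

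The only genuine content, and the step I would be most careful about, is the identification $\mu_0=\delta_{\mathbb{R}}(f)$ together with $\mu=\mu_++\mu_0+\mu_-$: one must know that the divide produced for $f$ accounts for all $\mu$ critical points of the morsification (no ``lost'' complex critical points) and that the number of its double points indeed realizes the maximal number of real saddles in a level — both of which rest on the existence of a suitable real deformation, unconditional in the totally real case and conjectural in general. As a cross-check independent of divides, I would note that $c_F$ is an orientation-reversing involution of the surface $F$ (homotopy equivalent to a wedge of $\mu$ circles) whose fixed locus is the real Milnor fiber $F_{\mathbb{R}}$, so the Lefschetz fixed point theorem gives $\mathrm{Trace}(c_{F*})=1-\chi(F_{\mathbb{R}})$; the identity then reads $\chi(F_{\mathbb{R}})=1-\mu+2\delta_{\mathbb{R}}(f)$, which one can verify directly from the real picture of a maximal real deformation.
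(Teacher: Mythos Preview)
Your argument is correct and coincides with the paper's proof: both reduce the identity to the two inputs $\mu=\mu_++\mu_0+\mu_-$ and $\mathrm{Trace}(c_{F*})=\mu_+-\mu_0+\mu_-$, together with the identification $\mu_0=\delta_P=\delta_{\mathbb{R}}(f)$. Your added discussion of the hypotheses (existence of a divide in the non--totally-real case) and the Lefschetz cross-check are extra; the paper simply takes the divide as given by the definition of $\delta_{\mathbb{R}}(f)$ and records the two-line computation.
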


\begin{proof} 
	We have $\delta_{\mathbb{R}}={\rm dim}E_0=\mu_0.$
	Hence 
	$\mu(f)=\mu_-+\mu_0+\mu_+=
	{\rm Trace}(c_{F*})+2\mu_0=2 \delta_{\mathbb{R}}(f)+{\rm Trace}(c_{F*}).$
\end{proof}

Curves $\delta_i$ that correspond to maxima of $f_P$ are invariant by the 
involution $c$. A curve $\delta_i$ 
that corresponds to a saddle point or minimum of $f_P$ is in general not 
invariant by $c$. The union
$\Sigma_P$ of the curves $\delta_i$ is invariant by $c$. We get

\begin{theorem} Let $P$ be a connected divide. The pair $(F_P,\Sigma_P)$ 
	defines  a tri-valent cellular decomposition with $r(P)$ punctured 
	cells  
	of the fiber surface with boundary $F_P$. The involution 
	$c:(x,u) \to (x,-u)$ 
	acts on the map $(F_P,\Sigma_P)$.
\end{theorem}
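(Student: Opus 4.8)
The plan is to read the pair $(F_P,\Sigma_P)$ off the ``tile'' decomposition of $D$ together with the local normal forms of an adapted function $f_P$ (Lemma~\ref{lem:existence_adpated_function}, Definition~\ref{def:adapted_function}), and then to verify the three assertions separately: that $\Sigma_P$ is an embedded trivalent graph, that its complement in $F_P$ consists of exactly $r(P)$ punctured cells, and that the involution $c$ acts on the resulting map. First I would recall from Section~\ref{s:description_Milnor} that, in the model where $F_P$ is the closure of the $2$-sheeted lift over $P_+=\{f_P>0,\ df_P\neq 0\}$, the boundary is $\partial F_P=L(P)$ and the lift acquires over the critical values the circles $F_M$ (maxima), the pairs of arcs $F_{z,+}$ (saddles $=$ double points) and the arcs $\partial D_+$; in this model $\Sigma_P$ is the sub-$1$-complex lying over the graph $S_P$ of gradient lines of $f_P$. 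Its edges are the lifts of the gradient edges of $S_P$ together with the arcs into which the endpoints of those lifts cut the circles $F_M$ and the arcs $F_{z,+}$, and its vertices are exactly those endpoints. Embeddedness is then immediate because $S_P$ is embedded in $D$ and the lift is controlled over every critical point; that $\Sigma_P$ coincides with $\bigcup_i\delta_i$, the union of the vanishing cycles of the distinguished basis (Definition~\ref{def:vanishing_cycle}) drawn as in Figures~\ref{fig:vanishing_saddle}--\ref{fig:vanishing_min}, is checked by comparing the three local pictures.

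Next I would establish trivalence by a purely local computation at each of the three vertex types. At a point of a circle $F_M$ where a lifted gradient edge attaches, the circle contributes two edges and the gradient edge one; using the normal form $f_P=1-x^2-y^2$ at $M$ one checks that distinct attachment points of $F_M$ never collide, so the valence is exactly three. At an endpoint of an arc of $F_{z,+}$ — which, in the normal form $f_P=L\cdot K$ at a saddle, is a limiting level direction equal to one of the two branch directions of $P$ at $z$ — the $F_{z,+}$-arc meets one lifted gradient edge, and the third incident edge is the neighbouring arc (of $L(P)$, or of the adjacent vanishing cycle after the curves have been pushed off $\partial F_P$ by a tear as in \cite{Acampo_quadratic}). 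The relative maxima of $f_P|_{\partial D}$ on the arcs $\partial D_+$ are treated the same way. A bookkeeping over the tiles (each double point carries two ascending gradient edges; each positive region carries a single interior extremum) then gives the numbers of vertices and edges of $\Sigma_P$, and hence $\chi(\Sigma_P)$.

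For the complement, a connected component $C$ of $F_P\setminus\Sigma_P$ projects into a union of open tiles of $D$ contained in $P_+$; over each such tile $C$ is a trivial two-sheeted cover, and sheets of neighbouring tiles are glued to one another only along the lifts over the arcs of $\partial D_+$ and along the arcs of $L(P)$ over edges of $P$, never along $\Sigma_P$. Following these gluings shows that $C$ is assembled by travelling once around a single branch of $P$, so $C$ retracts onto the corresponding boundary circle of $L(P)$ and is a once-punctured cell; hence $F_P\setminus\Sigma_P$ has exactly $r(P)$ components, one per branch. The count is confirmed by $\chi(F_P)=\chi(\Sigma_P)+r(P)\cdot 0$, in agreement with $\chi(F_P)=r(P)-2\delta_P$ (equivalently $b_1(F_P)=\mu=2\delta_P-r(P)+1$). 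Finally, the involution $c:(x,u)\mapsto(x,-u)$ covers the identity of $D$, hence preserves $S_P$, $P_+$ and all critical-point data, hence preserves both $F_P$ and $\Sigma_P$; being fibre-preserving over $D$ it sends edges to edges and vertices to vertices, i.e.\ it acts cellularly on the map $(F_P,\Sigma_P)$, which is the last assertion.

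The step I expect to be genuinely delicate is identifying each complementary component as a \emph{punctured} cell: one must check that the sheet-gluings over $\partial D_+$ and over the edges of $P$ close up around exactly one branch of $P$, producing neither an extra handle nor an extra boundary circle, which is precisely where the connectedness of the divide and the chessboard coloring are used. The other place that needs care is the trivalence count at the saddle vertices, where one has to keep track of which of the two arcs $F_{z,+}$ a given lifted gradient edge limits onto; once this local combinatorics is pinned down, the Euler-characteristic check and the $c$-equivariance are routine.
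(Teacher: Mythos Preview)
Your approach is correct but takes a substantially more laborious route than the paper. The paper's proof is three sentences: it observes that $\Sigma_P$ is the union $\bigcup_i\delta_i$ of the vanishing cycles (already described in the preceding paragraphs, from which trivalence and $c$-invariance are read off directly), and then invokes the single homological fact that, since the $\delta_i$ form a basis of $H_1(F_P,\mathbb{Z})$, the inclusion $\Sigma_P\hookrightarrow F_P$ induces an isomorphism on $H_1$. This makes $\Sigma_P$ a spine for the bounded surface $F_P$, so $F_P\setminus\Sigma_P$ is automatically a disjoint union of half-open annuli, one per boundary component of $F_P$---that is, $r(P)$ punctured cells.

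By contrast, you argue the cell count geometrically, tracking how the two sheets over the tiles of $P_+$ glue along $\partial D_+$ and along $L(P)$ to close up around each branch. This is valid and has the virtue of being explicit, but it is exactly the ``genuinely delicate'' step you flag, and it is entirely bypassed by the spine argument: once you know the $\delta_i$ generate $H_1(F_P)$ (which is the content of their being a distinguished basis of vanishing cycles), the topology of the complement is forced. Your Euler-characteristic check is then the same computation that underlies the equivalence ``$H_1$-iso $\Rightarrow$ spine'' for an embedded connected graph in a surface with boundary. So your local trivalence analysis and $c$-equivariance verification match the paper's implicit reasoning; the real economy you are missing is replacing the direct dissection of $F_P\setminus\Sigma_P$ by the one-line observation that $\Sigma_P$ already carries all of $H_1$.
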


\begin{proof}
	The union $\Sigma_P$ of the vanishing cycles $\delta_i$
	is a tri-valent graph $\Sigma_P$ that is invariant by $c$. The inclusion 
	$\Sigma_P \subset F_P$ induces an isomorphism 
	$H_1(\Sigma_P,\mathbb{Z}) \to H_1(F_P,\mathbb{Z})$, hence, $\Sigma_P$ is a 
	spine for the 
	surface $F_P$.
\end{proof}

{\bf Question}. In particular if $r(P)=1$ the triple $(F_P,\Sigma_P,c)$ is a 
so-called
{\em maximal unicell map} of genus $g(P)$ with orientation reversing 
involution $c$ with $r(P)=1$ fixed points on the graph of the map. 
The involution $c_F$ has a unique fixed point on $\Sigma_P$, which 
corresponds to the intersection of the 
folding curve $F_P \cap \partial{D}$ of $c_F$ with the graph $\Sigma_P$. 
Maximal here
means that the number of edges of the graph of the map is maximal, i.e.
$3g$.

It would be interesting to compute the generating series  
$$
{\rm divide}(t):=\sum_{g\in \mathbb{N}}d(g)t^g=1+t+2t^2+8t^3+36t^4+ \cdots,
$$ 
where $d(g)$ denotes the 
number of simple, relative and  generic immersions of the  interval $[0,1]$ 
with $g$ double points in the  disk $D$, counted up to homeomorphism in the
source and image. We have taken the coefficients up to the term $t^4$ of 
${\rm divide}(t)$  from the listings of
simple, relative, free or oriented divides by Masaharu Ishikawa \cite{Ishi_phd}.
We ask to compare the numbers $d(g)$ with the numbers $m^+(g)$ of maximal 
maps of genus $g$ with orientation reversing involution having $1$ fixed point
on its graph.

\subsubsection*{Involutions induced by $\pi$-rotations}

Next we study the involutions that appear in the decomposition
of the geometric monodromy of plane curve singularities. We show that they lift 
to $\pi$-rotations
about an axis in the universal cyclic covering of the complement of the link.

Our next goal is to visualize the two involutions $c_F$ and $T \circ c_F$. 
We assume that the monodromy diffeomorphism $T$ is chosen 
in its isotopy class such that $T \circ c_F$
is an involution of the fiber surface $F_P$. In fact, we assume that
the monodromy $T$ was given by a monodromy vector field $X$, which satisfies
$X=X^c$.

\begin{figure}
	\centering \includegraphics*[scale=0.56]{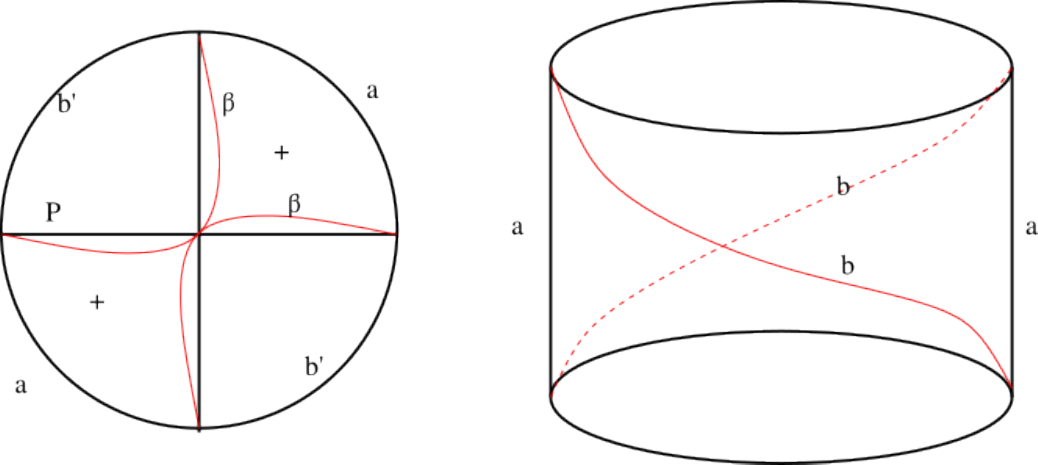}
	\caption{  Dehn twist as composition of two involutions.}
	\label{fig:dehn_inv}
\end{figure}

We assume for simplicity, that the divide $P$ meets the boundary of the disk
$D$.
The involution $c_F$ is an orientation reversing diffeomorphism of $F_1=F_P$,
which fixes point-wise the system of arcs 
$a:=F_P \cap \partial{D}$. The Lefschetz number
of the orientation reversing involution 
$T \circ c_F$ is equal to the Lefschetz number of $c_F$ by Th. $1$. 
Hence the 
involution $T \circ c_F$ also
fixes  point-wise exactly a system of  arcs $b$ on the fiber surface $F_P$ 
with $\chi(a)=\chi(b)$, i.e.
the systems $a$ and $b$ consist of the same number of arcs. 

Let 
$z:Z \to S^3 \setminus K_P$ be the infinite
cyclic covering  of the knot complement. Let $X^Z$ be the lift of the vector
field $X$. Let $T_{{1 \over 2}}:Z \to Z$ be the flow diffeomorphism of 
the vector field $X^Z$ with stopping time ${1 \over 2}$. 
Let $F_1' \subset Z$ be a lift of the fiber surface
$F_1$ and let $F_{-1}'$ be the lift $T_{{1 \over 2}}(F_1')$ 
of the fiber surface $F_{-1}$. The involution $c_F$ can first 
be lifted unambiguously to an 
involution of $F_1'$ and then extended unambiguously 
to an involution
$A$ of $Z$ such that $A$ maps the vector field $X^Z$ to its opposite. 
The involution $A$ is a $\pi$-rotation about a lift of the system of arcs 
$a$ into $F_1'$.
The involution $(x,u) \mapsto (x,-u)$ induces an involution $c_{F_{-1}}$ 
of the fiber surface $F_{-1}$ above $-1$, and,
as we have done for the involution $c_F$,  
the involution $c_{F_{-1}}$ lifts unambiguously to a 
$\pi$-rotation $B:Z \to Z$ about a system of  arcs  
$b'' \subset F_{-1}'$ that
satisfies
$z(b'')=b':=\partial{D} \cap F_{-1}$. 
We have $B \circ A=T_{{1 \over 2}}\circ T_{{1 \over 2}}=T_1$. Since 
$T_1$ is a lift of the monodromy, it follows from $(T \circ c_F) \circ c_F=T$, 
that
the involution $T \circ c_F$ fixes the system of arcs 
$b:=z(T_{{1 \over 2}}^{-1}(b''))$.

\begin{figure}
	\centering \includegraphics*[scale=0.5]{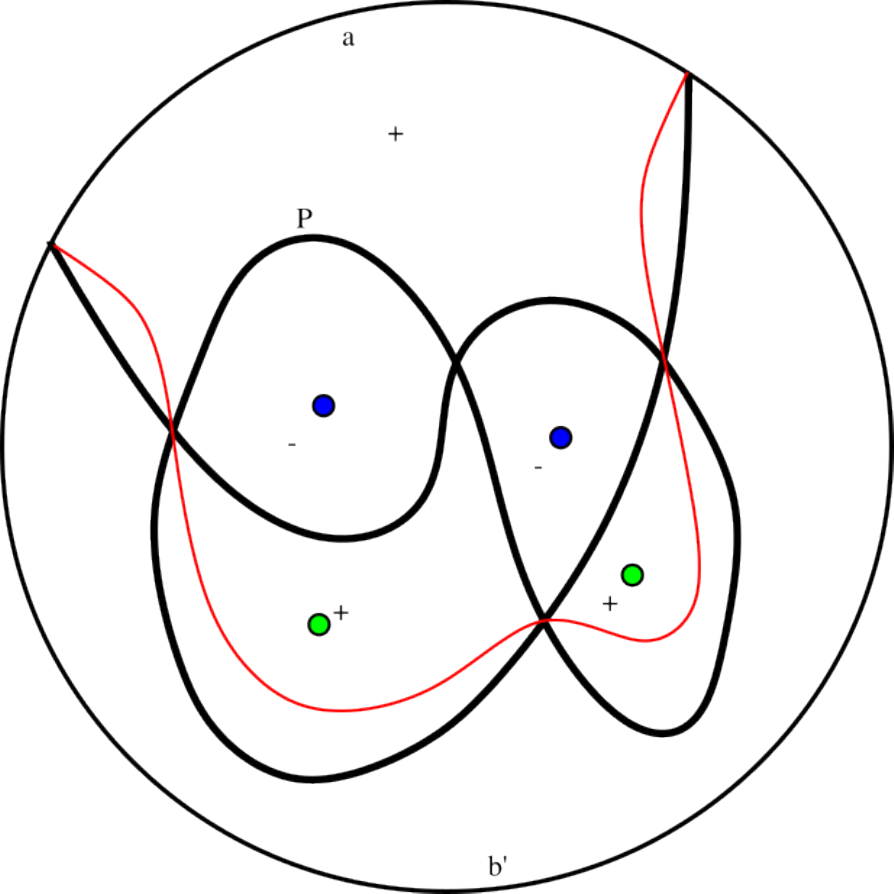}
	\caption{ Divide for the $E_8$ singularity and the curve $\beta$ in red.}
	\label{fig:E_8_beta}
\end{figure}

In Sect.~\ref{ss:monodromy_product} (c.f. \cite{Acampo_generic_imm}) we have 
computed the monodromy
diffeomorphism $T_+:F_1 \to F_{-1}$ for which $T_{{1 \over 2}}$ is a lift to
$Z$ as a product of half Dehn twists for a divide. 
From the above we deduce, that the
involution $T \circ c_F:F_1 \to F_1$ is the composition 
$T_+^{-1} \circ c_{F_{-1}} \circ T_+$. We see that $T \circ c_F$ fixes the 
arc $b=T_+^{-1}(\partial{D} \cap F_{-1})$. We also see that the involution
$c_F \circ T$ of $F_1$ fixes the system of arcs 
$c_F(b)=T_+(\partial{D} \cap F_{-1})$.
Both arc systems $b$ and $c_F(b)$ have equal  projections $\beta$ into $D$.
See  Figures~\ref{fig:dehn_inv} and \ref{fig:E_8_beta} for examples. 

The composition of the orientation reversing
involutions, that fix point-wise the arc systems $a,b$ of 
Fig.~\ref{fig:dehn_inv} on 
the 
cylinder surface,
is indeed a Dehn twist. To see this, think of  the cylinder as 
$[0,\pi] \times S^1$; the two involutions induce on the circle 
$\{\theta\} \times S^1$ reflections about diameters that make an 
angle $\theta$, so the
composition is a rotation of angle $2\theta$ of the circle 
$\{\theta\} \times S^1$.

In Fig.~\ref{fig:E_8_beta} the curve $\beta$ is drawn for a more complicated 
divide 
for the
plane curve singularity $E_8$ with equation $y^3-x^5=0$.

We conclude with a remark we promised that shows that uses the results of this 
section to prove that not all fibered links come from divides.
\begin{remark}\label{rem:eight_knot_divide}
	The figure eight knot (see Fig.~\ref{fig:figure_eight}) is not the knot of 
	a 
	divide. 
	The 
	figure eight knot's 
	complement fibers over the circle having as fiber the punctured torus and 
	as monodromy the isotopy class of the 
	linear diffeomorphism given by a matrix in $SL(2,\mathbb{Z})$ of trace $3$. 
	Such a matrix $M$ is not the product of two  
	unipotent matrices, which are conjugate in $SL(2,\mathbb{Z})$ and the 
	matrices $M$
	and $M^{-1}$ are not conjugated by an integral matrix of determinant $-1$. 
	So according 
	to the remarks  this section,  the 
	figure eight can not be the knot of a divide. Another argument to 
	rule out the figure eight as the knot of a 
	divide goes as follows. The first betti number of the fiber of the 
	figure eight knot is $2$.  But only two 
	connected divides
	have  fibers with betti number $2$ and these two have monodromies with 
	trace equal to $1$ or $2$.   
\end{remark}
\section{Abstract visualization of the Milnor fiber and the monodromy}
\label{s:abstract}

In this section we describe an easy and fast graphical algorithm of visualizing 
the Milnor fiber
together with a distinguished
basis of vanishing cycles directly from the divide (\cite{Acampo_real_def}).

\subsection{Visualization of the vanishing cycles for a divide}
\label{ss:visual_vanishing}
\index{geometric vanishing cycles!visualization}
We start by visualizing the vanishing cycles.

Let $P$ be a connected divide and let 
$\pi_P:S^3 \setminus L(P) \to S^1$ be the 
fibration of the complement of the link $L(P)$ over $S^1$ as in 
Sect.~\ref{s:fibration_thm}
(c.f. \cite{Acampo_real_def,Acampo_generic_imm}).
Recall that the fibration map is given with the help of an auxiliary Morse 
function
$f_P:D \to \mathbb{R}$.
The fiber $F_P$ above $1 \in S^1$ projects to the positive components of
the complement of $P$ in $D$. One has that the closure of 
$$
\{(x,u)\in TD | f_P(x)>0, (df_P)_x(u)=0, \|x\|^2+\|u\|^2=1\}
$$
in $S^3 \setminus L(P)$ is the fiber surface $F_P$.

To each critical point of $f_P:D \to \mathbb{R}$ corresponds a vanishing cycle 
on
the surface $F_P$. In the case, where the divide $P$ is a divide of a 
singularity, the surface $F_P$ is a model for the Milnor fiber and the
system of
vanishing cycles on $F_P$ is a model for  a distinguished system of quadratic
vanishing cycles of the singularity.

Let $M$ be a maximum of $f_P$. The vanishing cycle $\delta_M$ is the 
non-oriented simply closed curve on $F_P$, see Fig.~\ref{fig:van_max},
$$
\delta_M:=\{u\in T_MD | \|M\|^2+\|u\|^2=1\}. 
$$

\begin{figure}
	\centering \includegraphics*{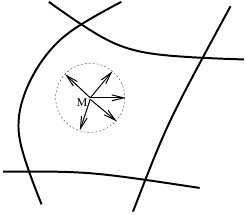}
	\caption{The vanishing cycle $\delta_M$ for a maximum.}
	\label{fig:van_max}
\end{figure}

Let $c$ be a crossing point of $P$. The point $c$ is a saddle type 
singularity of $F_P$. The vanishing cycle is the non-oriented simply closed
curve $\delta_c$ on $F_P$ that results from the following construction.
Let $g_c \subset P_+ \cup \{c\}$ be the singular 
gradient line through $c$, for which
the endpoints are a maximum of $f_P$ or a point in $\partial{D}$. We splice
$g_c$ and get a double {\em tear} $t_c \subset P_+ \cup \{c\}$ as in 
Fig.~\ref{fig:tear_van}. The
tear $t_c$ is a closed curve, that has at $c$ a non-degenerate
tangency with $g_c$ from both sides. Moreover, $t_c$ 
is perpendicular to $g_c$ at the
endpoints of $g_c$, if the endpoint is a maximum of $f_P$ and else $t_c$
has a tangency with $\partial{P}$. The vanishing cycle $\delta_c$
is the closure in $F_P$ of the set 
$$
\{(x,u)\in F_P |  x\in t_c,\,\,(df_P)_x\not=0,\,u\,
\hbox{{\it points to the inside of the tear}}\,\,t_c\}.
$$

\begin{figure}
	\centering \includegraphics*{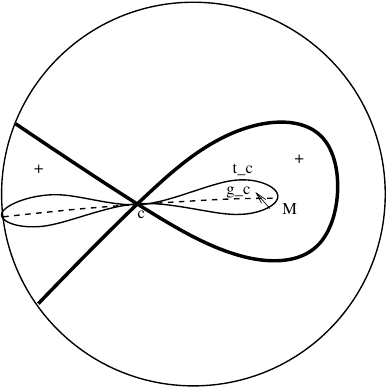}
	\caption{Tear for vanishing cycle $\delta_c$ for a saddle point.}
	\label{fig:tear_van}
\end{figure} 

Let $m$ be a minimum of $f_P$. The following is a description of the
vanishing cycle $\delta_m$ on $F_P$. The projection of $\delta_m$ in 
$D$ is a simply closed curve $t_m$ in $P_+ \cup \{c_1,c_2, \dots ,c_k\}$,
where $\{c_1,c_2, \dots ,c_k\}$ is the list of the double points of $P$
that lie in the closure of the region of $m$, see Fig.~\ref{fig:delta_min}. The 
curve 
$t_m$ and the singular gradient line $g_{c_i},\, 1\leq i\leq k$ coincide in
a neighborhood of $c_i$. Moreover, if the endpoint of $g_{c_i}$ is a 
maximum $M$,
the curve $t_m$ leaves transversally the tear $t_{c_i}$ at $M$ and enters
transversally the next tear $t_{c_{i\pm 1}}$. If
the endpoint of $g_{c_i}$ is on the boundary of $D$, the curve leaves the tear
$t_{c_i}$, becomes tangent to the boundary of $D$ and 
enters the next tear, see Fig.~\ref{fig:delta_min}. The vanishing cycle 
$\delta_m$ 
is the 
non-oriented simply closed curve on $F_P$, that is the closure of
$$\{(x,u)\in F_P | x \in t_m,\,u\,\hbox{{\it points inwards to the
		disk bounded by }}\,\,t_m\}.
$$

\begin{figure}
	\centering \includegraphics*{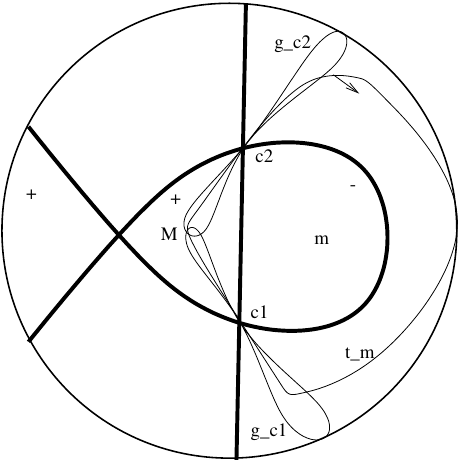}
	\caption{The vanishing cycle $\delta_M$ for a minimum.}
	\label{fig:delta_min}
\end{figure}

\subsection{Visualization of the Milnor fiber associated with a divide}
\label{ss:visual_fiber}
\index{Milnor fiber!visualization}
Now we explain how to visualize the Milnor fiber together with the divide. 
Think of the divide
as  a road network which has $\delta$ junctions, and replace
every junction by a
roundabout,
which leads you to a new road network with $4\delta$ T-junctions.
Realize now
every road section in between two T-junctions by a strip with a half
twist. Do the same for every road section in between a
T-junction and the boundary of the divide. Altogether you will need
$6\delta+r$ strips. Now, applying what we have seen in the previous 
Sect.~\ref{ss:visual_vanishing}, the core line of the four
strips of a roundabout is a black vanishing cycle, the strips
corresponding
to boundary edges and
corners of a positive or negative region have as core line a red or blue
vanishing cycle (see Fig.~\ref{fig:van_cycles_D5} or \ref{fig:van_cycles_comp} 
for color scheme).

\begin{figure}
	\tiny
	\centering
	\resizebox{0.9\textwidth}{!}{\centering 
		\includegraphics*[scale=0.6]{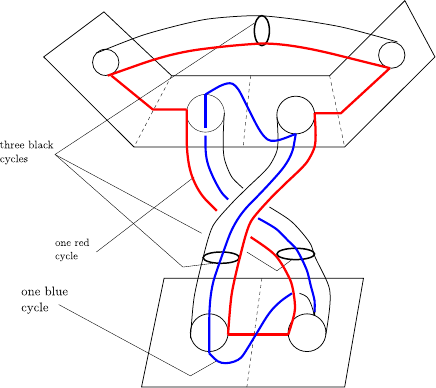}}
	\caption{Milnor fiber with vanishing cycles for $D_5.$}
	\label{fig:van_cycles_D5}
\end{figure}

In Fig.~\ref{fig:van_cycles_comp}  is worked out the singularity with two
Puiseux pairs and $\mu=16$, where we used a divide equivalent (up to an 
admissible isotopy) to that of 
Fig.~\ref{fig:29_23}. We have drawn
for convenience in
Fig.~\ref{fig:van_cycles_comp} only one red, black, or blue cycle.
We have also indicated the position of the arc $\alpha,$ which will
play a role in the next section.

\begin{figure}
	\tiny
	\centering
	\resizebox{0.8\textwidth}{!}{\centering 
		\includegraphics*{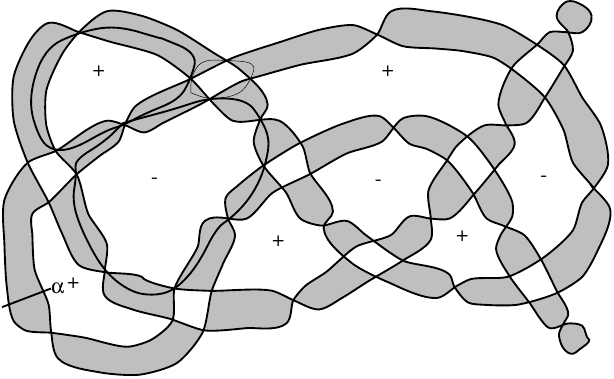}}
	\caption{ Milnor fiber with vanishing cycles for
		$y^4-2y^2x^3+x^6-x^7-4yx^5.$}
	\label{fig:van_cycles_comp}
\end{figure}

\bigskip
\subsection{Reduction cycles and reduction tori}
\label{ss:reduction_system}
We consider a divide of the form $Q:=P_{p,q}*P$, where the divide $P$ is
given by an immersion $\gamma:[-1,1] \to D$. So,
the divide $Q$ is the image of 
$N\gamma \circ T_{p,q}$. If we change the immersion
by a repara\-metrization $\gamma_1:=\gamma \circ \phi$, where $\phi$ is an
oriented diffeomorphism of $[-1,1]$, the divide 
$Q_1:= N\gamma_1 \circ \phi([-1,1])$ is isotopic to 
$Q$ by an transversal isotopy, which does not change 
the type of its knot. By choosing $\phi$ appropriately and $\eta$ small, 
one can achieve that to each double
point of $P$ correspond $p^2$ double points of the divide $Q_1$, 
which look like the intersection points of 
a system of $p$ almost parallel lines with an other system 
of $p$ almost parallel lines, see Fig.~\ref{fig:manhattan}.
\begin{figure}
	\centering \includegraphics*{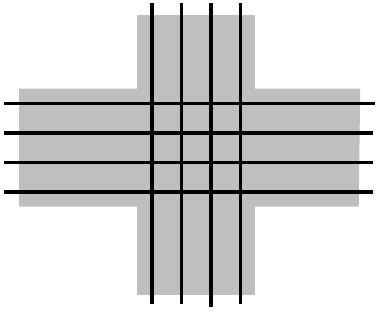}
	\caption{Manhattan: crossing of the box with $4$ by $4$ strands.}
	\label{fig:manhattan}
\end{figure}

We may assume, that the divide $Q$ for each double point of $P$
already has a grid of $p^2$ intersections. 

We will construct \index{reduction curves} reduction curves for the monodromy 
of the knot $L(Q)$
following the ideas of \cite{Acampo_sur_la}. The reduction curves of the 
cabling $P_{p,q}*_{\eta}P$
are the intersection of the fiber $F_{Q}$ over 
$1 \in S^1$ of the fibration on
the complement of the knot $L(Q)$ with the boundary of a regular tubular 
neighborhood $U$ of the closed tubular neighborhood $V$ of the knot $L(P)$
for which $L(Q) \subset \partial{V}$ holds. The intersection 
$(F_{Q} \cap \partial{U}) \subset F_{Q}$ is indeed 
a system of reduction curves
provided that the torus $\partial{U}$  is transversal to the fibration
of the knot $L(Q)$.

Assume that the tubular neighborhood $V$ was constructed with
the field $\Phi_{p,q}$ and a particular value of the parameter $\eta$.
The same field of sectors, but a slightly bigger parameter value
$\eta'$ yields a tubular neighborhood $U$ of $V$ in $S^3$. The construction 
of the fibration will be done as in Sect.~\ref{s:fibration_thm} (c.f. 
\cite{Acampo_generic_imm}). The main choice for the 
construction of the fibration for the knot $L(Q)$ is a carefuly chosen adapted 
Morse function (recall 
Definition~\ref{def:adapted_function})
$f_Q:D \to \mathbb{R}$ with $f_Q^{-1}(0)=Q$ . For  
our purpose here, where we must achieve the above
transversality so we will choose $f_Q$ as follows.
First, after applying a  regular transversal small 
isotopy, we may assume that the divide $P$  
has perpendicular rectilinear crossings. Next, we 
consider an adapted function $f_P:D \to \mathbb{R}$ for 
the divide $P$, that is euclidian near its crossings. Let the fibration 
on the complement of the knot $L(P)$ be
$\pi_{P,\eta}:S^3 \setminus L(P) \to S^1$ 
where we recall that
$\pi_{P}(x,u):=\theta_{P}(x,u)/ |\theta_{P}(x,u)|$
and
$$
\theta_{P,\lambda}(x,u):=f_P(x)+i \lambda^{-1}\, df_P(x)(u)-
{1\over 2}\lambda^{-2}\chi(x)H_{f_P}(x)(u,u).
$$
Here, the function $\chi:D \to \mathbb{R}$ is a bump function at the crossing 
points of $P$ and $\lambda$ is a big real parameter (compare with 
eq.~\ref{eq:theta_func}).
We now choose  a small positive real number $v$, such that 
$\{x \in D | |f_P(x)|\leq v\}$ is a regular tubular neighborhood of $P$, that
meets each component of $\{x\in D | \chi(x)=1\}$. 
Next 
we choose $\eta'>0$ such the corners of $N_{\eta'}\gamma(B)$ are in
$\{|f_P(x)|=v\}$ i.e. $\eta'^2=v$.  We constuct the torus knot $L(Q)$ with
$Q:=P_{p,q}*_{\eta}P$ where $0 < \eta < \eta'$. Since 
$Q \subset \{|f_P(x)| < v\}$ holds, we can construct 
a Morse function
$f_Q:D \to \mathbb{R}$ for the divide $Q$, such 
that on $\{|f_P(x)| \geq v\}$ the function $f_Q$
is constant along the level sets of $f_P$. 

The following 
theorem follows directly from  \cite[Lemme $2$, pg. $153$]{Acampo_sur_la} and 
the above construction.

\begin{theorem} \label{thm:reduction_system} The torus 
	$\partial{\Phi_{\eta',p,q}}\gamma$ 
	is transversal to the fibration $\pi_Q$
	induced by $f_Q$ on the complement of the knot $L(Q)$. The intersection 
	$$
	\partial{\Phi_{\eta',p,q}}\gamma \cap F_{Q}
	$$ 
	is a system of $p$ closed  curves on the fiber $F_{Q}$, which 
	is a reduction of the monodromy of $L(Q)$.
\end{theorem}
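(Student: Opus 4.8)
The plan is to deduce the theorem from the cited Lemme 2 of \cite{Acampo_sur_la} together with the specific choices of Morse function and parameters made just above. The key point established in the construction is that on the region $\{|f_P(x)| \geq v\}$ the function $f_Q$ is constant along the level sets of $f_P$; since the torus $\partial{\Phi_{\eta',p,q}}\gamma$ lives precisely over $\{|f_P(x)| = \eta'^2 = v\}$, the fibration $\pi_Q$ restricted near this torus is controlled entirely by the level sets of $f_P$, which are transversal to the radial direction in which the torus is swept out. First I would make this transversality statement precise: in the tubular neighborhood coordinates coming from the field of sectors $\Phi_{p,q}$, the page $F_Q = \pi_Q^{-1}(1)$ is, over the region $\{|f_P| \geq v\}$, a union of graphs over the $P$-direction determined by $\theta_{P,\lambda}$, and the torus $\partial{\Phi_{\eta',p,q}}\gamma$ is a graph in the complementary (normal) direction. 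Two such graphs in complementary directions intersect transversally; this is exactly the content one needs, and it is where the requirement $\eta < \eta' $, so that $Q \subset \{|f_P(x)| < v\}$ and hence all crossings of $Q$ and all critical points of $f_Q$ coming from them are \emph{inside} the torus, gets used.

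Next I would verify that the intersection $\partial{\Phi_{\eta',p,q}}\gamma \cap F_Q$ has the claimed topology, namely $p$ closed curves. The torus $\partial{\Phi_{\eta',p,q}}\gamma$ is, by the lemma preceding Definition~\ref{def:cabling_divide} on the field of sectors, the boundary of a tubular neighborhood of $L(P)$, and $L(Q)$ is the $(p,q)$-cable of $L(P)$ sitting on a concentric torus inside it. A page of the fibration of a cabled knot meets the cabling torus in a finite system of parallel curves; the number of these is read off from the cabling data, and with the normalization that $P$ is a generic immersed interval (so $L(P)$ is a knot) and with $\gcd(p,q)=1$, the cable $L(Q)$ runs $p$ times in the longitudinal direction, forcing the page to meet the torus in $p$ curves. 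I would make this count by restricting $\pi_Q$ to the torus and computing its degree, using that on $\{|f_P| = v\}$ the argument of $\theta_{P,\lambda}$ winds exactly once as one goes around a meridian of $L(P)$ (this is the open-book structure of Theorem~\ref{thm:fibered_link} for $P$), so a level $\pi_Q = 1$ is hit $p$ times along the $(p,q)$-torus-knot direction.

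Finally, to conclude that this system of curves is a \emph{reduction} system in the sense of Definition~\ref{def:reduction_system}, I would invoke the general principle (this is the substance of \cite[Lemme $2$]{Acampo_sur_la}) that if a torus $T$ in $S^3$ is transversal to the fibration of a fibered link $L(Q)$ with $L(Q)$ on one side of $T$, then $T \cap F_Q$ is invariant up to isotopy under the geometric monodromy: indeed the monodromy can be chosen to be supported compatibly with the product structure on the solid torus bounded by $T$ not containing $L(Q)$, so it preserves $T \cap F_Q$ setwise. The curves are non-null-homotopic because they separate the cabled part of $F_Q$ (the portion over the inside of the torus, carrying the $P_{p,q}$-crossings) from the rest, and a null-homotopic such curve would bound a disk in $F_Q$, contradicting that both complementary pieces carry homology. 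The main obstacle I expect is the transversality verification itself: one must check that the carefully engineered matching condition ``$f_Q$ constant along level sets of $f_P$ on $\{|f_P| \geq v\}$'' genuinely forces $\partial{\Phi_{\eta',p,q}}\gamma$ to avoid the critical locus of $\pi_Q$ and to meet $F_Q$ cleanly, which amounts to an explicit local computation of $d\pi_Q$ restricted to the torus in the sector-field coordinates, analogous to the Jacobian computation in the proof of Theorem~\ref{thm:fibered_link}; everything else is then formal given \cite[Lemme $2$]{Acampo_sur_la}.
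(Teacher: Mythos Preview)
Your proposal is correct and follows the same approach as the paper: the paper's proof consists of the single sentence that the theorem ``follows directly from \cite[Lemme $2$, pg.~$153$]{Acampo_sur_la} and the above construction,'' and your argument is precisely an unpacking of what that entails---using the construction (in particular that $f_Q$ is constant along level sets of $f_P$ on $\{|f_P|\geq v\}$ and that $\eta'^2=v$) to get transversality, then invoking the cited lemma for the reduction property. Your elaboration on the count of $p$ curves and on non-null-homotopy goes beyond what the paper states explicitly, but is consistent with the surrounding discussion.
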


With a few examples, we now explain how to inductively depict in the Milnor 
fiber
a distinguished system of vanishing cycles and the
reduction curves for the monodromy of a singularity,
for which a divide of the form $Q=P_{p,q}*P$ is given.

For a $(p,q)$ cabling ($p < q$) the reduction 
system consists of $p$ simply closed curves on the fiber $F_{Q}$. Each 
of them cuts out from $F_{Q}$ a surface diffeomorphic to the fiber 
$F_{P}$ of the divide $P$. The $p$ copies of $F_{P}$ in $F_{Q}$ 
are cyclicly permuted by the monodromy $T_Q$. This description follows from the 
construction of the monodromy of an irreducible plane curve singularity given 
in \cite{Acampo_sur_la}.

One of those copies can be visualized more easily as follows.
Let $\{x \in D | f_P(x) \geq v \}$. For each double point $c$ of $P$
we connect the two components of $\{x \in D | f_P(x) \geq v \}$,
that are incident with the double point $c$, by a special bridge  
which projects diagonally through
the Manhattan part  of the divide $Q$,
that corresponds to $c$. The
projection of the bridge is  a twisted strip $S_c$ in $D$, that 
realizes a boundary connected sum of
the $P_+$-components (see Fig.~\ref{fig:bridge}). The twist points of the strip 
$S_c$ are precisely
the critical points of $f_Q$, that lie on the diagonal. The boundary
of $S_c$ consists of two smooth curves, that intersect
each other transversally and that also intersect the divide $Q$ 
transversally.

\begin{figure}
	\tiny
	\centering \resizebox{0.7\textwidth}{!}{\centering 
		\includegraphics*{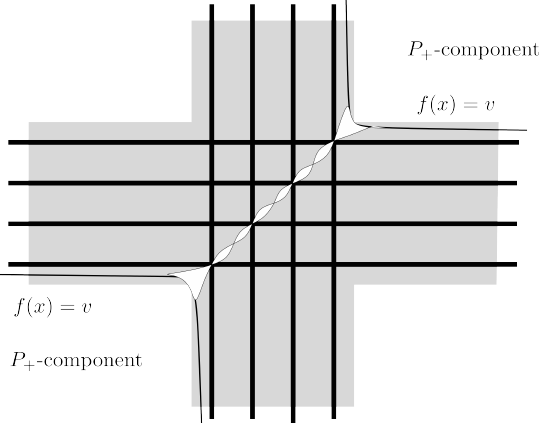}}
	\caption{Bridge through Manhattan.}
	\label{fig:bridge}
\end{figure}

Let $C$ be the union
of the projections $S_c$ of the 
bridges with $\{x \in D | f_P(x) \geq v \}$, see Fig.~\ref{fig:bridge}. 
In Fig.~\ref{fig:bridge_block} we have zoomed
out one block to  show more details.
The copy $F_{P,Q}$ 
of the fiber of the knot $L(P)$ is the closure  in the fiber $F_Q$
of the knot $L(Q)$ of the set
$$
\{(x,u) \in TD | x \in C, (df_Q)_x\not=0, (df_Q)_x(u)=0\}\cap S^3.
$$

The first
reduction curve   is the 
boundary 
\[
R:=\partial{F_{P,Q}}
\] 
of the surface
$F_{P,Q}\subset F_Q$. The reduction system
is the orbit 
\[
\{R, T_Q(R), T_Q^2(R), \dots \}
\]
under the monodromy $T_Q$ of the singularity with divide $Q$.

\begin{figure}
	\centering \includegraphics*{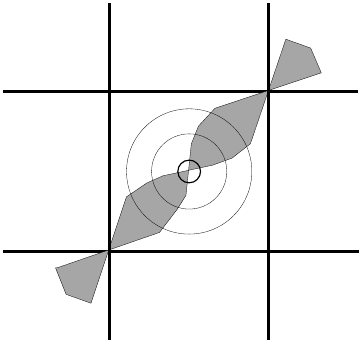}
	\caption{Bridge through a block of Manhattan.}
	\label{fig:bridge_block}
\end{figure}

\begin{example} Our first example is the singularity with two essential Puiseux 
	pairs
	$(x^3-y^2)^2-4x^5y-x^7$, whose link is a two stage iterated torus knot. For 
	a 
	detailed description of the link, the monodromy and the Milnor fiber of 
	this 
	singularity, follow the construction of \cite{Acampo_sur_la}. Its
	divide $Q=P_{2,9}*P_{2,3}$, (see Fig.~\ref{fig:29_23}), has two 
	$P_+$-components, 
	where
	$P=P_{2,3}$ 
	(recall Fig.~\ref{fig:29_23_red} and contents of that section), where the 
	projection of the reduction curve $R$ is
	drawn).  In this case Manhattan consists of one block. 
	The reduction curve $R$ is the pre-image in the fiber 
	$F_Q$ of its projection $\mathrm{proj}(R) \subset D$ under
	the map $(x,u) \mapsto x$ (see Fig.~\ref{fig:29_23_red}). That means $R$ 
	is the closure in $F_Q$ of the set
	$$
	\{(x,u) \in TD | x \in \mathrm{proj}(R), (df_Q)_x\not=0,(df_Q)_x(u)=0, 
	\|x\|^2+\|u\|^2=1\}.
	$$
	The reduction system is $\{R, T_Q(R)\}$.
	
	\begin{figure}
		\centering \includegraphics*{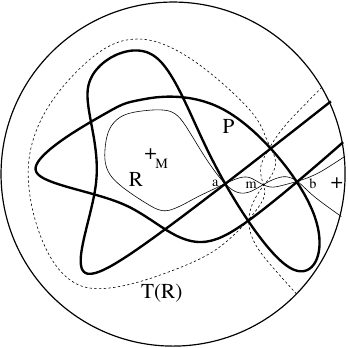}
		\caption{Divide $Q=P_{2,9}*P_{2,3}$ with reduction curves $R$ and 
			$T(R)$ (dotted).}
		\label{fig:29_23_red}
	\end{figure}
	
	The curve $R$ is homologically trivial in $F_Q$. It turns out
	that the power $T_Q^{156}$ of the monodromy is the composition of the
	right Dehn twists, whose core curves are  
	$\{R, T_Q(R)\}$ . The power
	$T_Q^{156}$ is a product of $2496=16 \times 156$ Dehn 
	twists, since $T_Q$ is the
	product of those Dehn twists whose core 
	curves are the system of distinguished
	quadratic vanishing cycles of the real morsification with divide $Q$. 
	It   turns out that the expression as product
	of Dehn twists is far from being as short as possible. In fact, the right 
	Dehn
	twist $\Delta_R$ with core curve $R$ can be 
	written as a product of $36$ right Dehn twists
	that have core curves coming from the morsification with divide $Q$. 
	More precisely, the Dehn twist $\Delta_R$ factors as
	$$
	\Delta_R=(\Delta_M \circ \Delta_b \circ \Delta_m \circ \Delta_a \circ 
	\Delta_m^{-1} \circ \Delta_b^{-1})^6,
	$$
	The factors are right Dehn twists whose core curves are 
	among the quadratic 
	vanishing cycles 
	$\delta_m,\delta_a,\delta_M,\delta_b$ of the 
	divide $Q$ as indicated in Fig.~\ref{fig:bridge_block}, 
	$\delta_M$ is the
	vanishing cycle of a $P_+$-region, $\delta_m$ of the 
	maximum of Manhattan, and $\delta_a,\delta_b$
	of street corners of Manhattan. It follows 
	that $T_Q^{156}$ can also be written 
	as a composition of $72$ Dehn twists with core curves 
	among the vanishing cycles of  the divide $Q$. 
	The 
	composition $\Delta_b \circ \Delta_m \circ \Delta_a \circ \Delta_m^{-1} 
	\circ 
	\Delta_b^{-1}$ is the Dehn 
	twist with core curve $\bar{a}:=\Delta_b(\Delta_m(a))$.
	
	The reduction curve $R$ cuts off from $F_Q$ a piece $F_{P,Q}$ of genus 
	one, which corresponds to a copy of the Milnor fiber of the Brieskorn-Pham 
	singularity $y^2+x^3$ (corresponding to the first Puiseux pair). 
	The Dehn
	twists $\Delta_M$ and $\Delta_{\bar{a}}$ act only on this piece, since
	the curves $\delta_M$ and $\bar{a}$ lie entirely in this piece; in
	this piece, that is a copy of the fiber $F_P$, they generate
	the geometric monodromy group of the accompanying
	singularity $x^3-y^2=0$ with divide $P_{2,3}$.
\end{example}

\begin{example}
	Our second example is the {\em double cusp}, that is, the singularity with 
	two 
	branches $(x^3-y^2)(y^3-x^2)$.
	Its homological monodromy is of infinite order by \cite{Acampo_sur_la}.
	Each branch is a torus knot. Again Manhattan consists of 
	one block. In Fig.~\ref{fig:double_cusp}
	we have drawn the projections of the curves 
	$R,R'$ and $S,S'$, that together are the boundary components of the 
	two diagonals through Manhattan. In this case the curves $R$ and $R'$ are
	isotopic to each other, as are the curves $S$ and $S'$. A complete reduction
	system for the geometric monodromy is the system $\{R,S\}$.
	Each component
	of this system carries a non-trivial homology class. The isotopy classes
	of the curves $R$ and $S$ are permuted by the monodromy $T_P$, hence the 
	system
	$\{R,S\}$ is invariant under the monodromy. 
	
	\begin{figure}
		\centering \includegraphics*{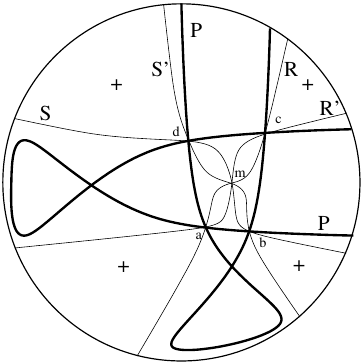}
		\caption{Divide $P$ for $(x^3-y^2)(y^3-x^2)$ with reduction system  
			$R \cup S$.}
		\label{fig:double_cusp}
	\end{figure}
	
	Let $h$ be the 
	action of $T_P$  on
	the homology $H_1(F_P,\mathbb{Z})$ of the the fiber $F_P$. Let
	$\delta_a, \delta_b, \delta_c, \delta_d$ be the 
	vanishing cycles of the double points, that are the
	corners of Manhattan of $P$, let $\delta_m$ be the vanishing cycle of the
	maximum in the center of Manhattan.
	
	If one chooses the orientations appropriately, on has 
	$$
	[R]=[\delta_a]+[\delta_m]+[\delta_c],\, 
	[S]=[\delta_b]+[\delta_m]+[\delta_d],\, 
	h([R])=-[S],\, h([S])=-[R],$$ 
	hence also $h([R]-[S])=[R]-[S]$. Let $[k]$ be any cycle on $F_P$, that
	is carried by a simple oriented curve $k$ and intersects the curves $R$ and 
	$S$
	each transversally in one point. One has $h^{10}([k])=[k]\pm([R]+[S])$,
	which shows that the homological monodromy $h$ is not of finite order. We
	have drawn in Fig.~\ref{fig:double_cusp} the oriented projection of such a 
	cycle 
	$k$, that
	intersects the curves $A$ and $B$. 
	The curve $A$ is halfway in between the curves $R$ and
	$R'$ on the cylinder they cut out. Let $B$ be the curve halfway in between
	$S$ and $S'$. The curves $A$ and $B$ are the reduction curves of 
	Figure $3$
	on page $167$ of \cite{Acampo_sur_la}. The reduction system $A,B$ is much 
	easier to
	draw, see Fig.~\ref{fig:double_cusp_red}, where are drawn the 
	projections 
	in 
	$D$. 
	The projections
	meet transversally at the maximum in Manhattan of $f_P$.
	The curve
	$\delta_m$ intersects 
	transversally in two points each curve $R$ and $S$. One has 
	$h^{10}([\delta_m])=[\delta_m]\pm 2([R]+[S])$.
	
	\begin{figure}
		\centering \includegraphics*{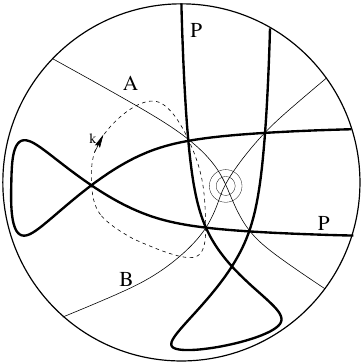}
		\caption{Divide $P$ for $(x^3-y^2)(y^3-x^2)$
			with reduction system  $A \cup B$.}
		\label{fig:double_cusp_red}
	\end{figure}
	
	The power $T_P^{10}$
	of the geometric 
	monodromy, that is a word of length 
	$110$ in the Dehn twists of the divide $P$,
	is equal to the composition of those right Dehn
	twists, whose core curves are  $R$ and $S$. So, the power $T_P^{10}$
	also can be written as  the much shorter word
	$$
	\Delta_c \circ \Delta_m \circ \Delta_a \circ \Delta_m^{-1}\circ 
	\Delta_c^{-1}\circ 
	\Delta_d \circ \Delta_m \circ \Delta_b \circ \Delta_m^{-1} \circ 
	\Delta_d^{-1}.
	$$
	
\end{example}

\begin{remark} The curves $A$ and
	$\Delta_m(\Delta_c(\delta_a))$ are isotopic, where 
	$\Delta_m$ and $\Delta_c$ are the right Dehn
	twists with core curves $\delta_m$ and $\delta_c$. It 
	follows that the reduction system
	$A,B$ consists  of quadratic vanishing cycles
	of the singularity of $\{(x^3-y^2)(y^3-x^2)=0\}$ with two branches. 
	In contrast, a  reduction curve of a 
	singularity with only one branch can not be   a quadratic vanishing cycle,
	since all reduction curves are zero in the homology.  
\end{remark}

\subsection{Geometric monodromy group and reduction system}

Let the polynomial $f_{(a,b)}$ be an equation for an irreducible
plane curve singularity with $n$ 
essential Puiseux pairs $(a_i,b_i)_{1\leq i \leq n}$ with $a_i < b_i$. The 
number of 
simply closed curves contained in a complete reduction
system $R$ for the monodromy of $f$ is 
$$
a_na_{n-1}\cdot\, \dots \,\cdot a_2+a_{n}a_{n-1}\cdot \, \dots \,\cdot a_3+ 
\dots +a_na_{n-1}+a_n.
$$
Indeed, this follows from the construction of the Milnor fiber in 
\cite{Acampo_sur_la}.
Let 
\[
\Gamma_{f,red} < \Gamma_f
\]
be the subgroup of the 
geometric monodromy group (recall Definition~\ref{def:geom_monodromy_group})
of $\Gamma_f$ of $f$ of those elements 
$\gamma \in \Gamma_f$ that up to isotopy fix
each component of $R$. Let $\Gamma_{f,red}^0$ be the subgroup of $\Gamma_f$
which is generated by the Dehn twist whose core curves are
quadratic vanishing cycles and do not intersect any component of $R$.
Obviously, one has $\Gamma_{f,red}^0 \subset \Gamma_{f,red}$, but 
we do not know if this inclusion is strict. A component of $F \setminus R$ is
called a top-component if its closure in $F$ meets only one
component of $R$. Let $\Gamma_{f,top}$ be 
the subgroup of $\Gamma_{f}$ of those monodromy transformations,
which induce the identity in each component of $F \setminus R$ that 
is not a top-component. 
Let $\Gamma_{f,top}^0$ be the intersection 
$\Gamma_{f,top}\cap \Gamma_{f,red}^0$. We have

\begin{theorem}\label{thm:geometric_mon_prod} Let $f=f_{(a,b)}$ be an 
	irreducible 
	singularity with
	$n \geq 2$ essential Puiseux pairs $(a_i,b_i)_{1\leq i \leq n}$. 
	Let $g=f_{(a',b')}$ be a singularity with 
	the $n-1$ essential Puiseux pairs
	$(a',b')=(a_i,b_i)_{1\leq i \leq n-1}$. The group 
	$\Gamma_f$ contains the product
	of $a_n$ copies of the group $\Gamma_g$.
\end{theorem}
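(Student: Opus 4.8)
The plan is to exploit the cabling description of the divide $Q = P_{a_n,b_n'} * S_{n-1}$ for $f$, where $S_{n-1}$ is a divide for $g$, together with the reduction system $R$ produced in Theorem~\ref{thm:reduction_system}. Recall that $R$ consists of $a_n$ simple closed curves on the fiber $F_Q$, and that cutting $F_Q$ along $R$ yields $a_n$ subsurfaces each diffeomorphic to the fiber $F_{S_{n-1}}$ of the divide for $g$, cyclically permuted by the monodromy $T_Q$. Denote these subsurfaces by $\Sigma_1, \dots, \Sigma_{a_n}$, with $T_Q(\Sigma_j) = \Sigma_{j+1}$ (indices mod $a_n$). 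The claim is then that $\Gamma_f$ contains a subgroup isomorphic to the $a_n$-fold product $\Gamma_g \times \cdots \times \Gamma_g$, where the $j$-th factor acts by the geometric monodromy group of $g$ on $\Sigma_j$ and by the identity on every other $\Sigma_k$ and on the complement of $\bigcup_j \Sigma_j$.

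First I would fix one copy, say $\Sigma_1 = F_{P,Q}$, the visualized copy built from the $P_+$-components and the diagonal bridges through Manhattan (as in Sect.~\ref{ss:reduction_system}). By the discussion there, the quadratic vanishing cycles of the divide $Q$ that lie entirely inside $\Sigma_1$ are precisely (smooth representatives of) a distinguished system of quadratic vanishing cycles for $g$; the Dehn twists along them generate a copy of $\Gamma_g$ supported in $\Sigma_1$. This uses the Hamm–Lê fact (recalled in Remark~\ref{rem:geom_invariant}) that the geometric monodromy group can be computed from any morsification, applied to the morsification of $g$ with divide $S_{n-1}$. Call this subgroup $G_1 \le \Gamma_f$. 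Next, for $j = 2, \dots, a_n$, conjugate: set $G_j := T_Q^{\,j-1} G_1 T_Q^{-(j-1)}$. Since $T_Q \in \Gamma_f$ and $\Gamma_f$ is a group, each $G_j \le \Gamma_f$, and since $T_Q^{\,j-1}$ carries $\Sigma_1$ diffeomorphically onto $\Sigma_j$ (preserving the boundary reduction curves), $G_j$ is a copy of $\Gamma_g$ supported in $\Sigma_j$ and acting there as the geometric monodromy group of $g$.

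The main obstacle is to show that the subgroup $\langle G_1, \dots, G_{a_n}\rangle$ is actually the \emph{direct} product, i.e. that the $G_j$ commute pairwise and intersect trivially. Commutativity is the geometric heart of the argument: the $\Sigma_j$ are pairwise disjoint subsurfaces of $F_Q$ (they meet only along the reduction curves $R$, which lie in the boundary of each and are fixed pointwise up to isotopy by every element of every $G_k$), and Dehn twists, resp. mapping classes, with disjoint supports commute. One must be slightly careful that the supports can be taken genuinely disjoint — this is where the choice of $\phi$ and small $\eta$ making Manhattan a clean grid (Fig.~\ref{fig:manhattan}) matters, so that the diagonal copies $\Sigma_j$ can be isotoped off one another. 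Triviality of the pairwise intersections, and of the intersection of each $G_j$ with the product of the others, then follows because an element of $G_j$ acts nontrivially on $H_1(\Sigma_j)$ while being the identity on $H_1(\Sigma_k)$ for $k \neq j$; since $H_1(F_Q)$ contains $\bigoplus_j H_1(\Sigma_j)$ as a direct summand compatibly with these actions (the reduction curves $R$ being the relevant separating system), distinct factors can share no nontrivial element. Assembling these observations gives an injection $\Gamma_g^{\,a_n} \hookrightarrow \Gamma_f$ with image $\langle G_1, \dots, G_{a_n}\rangle$, which is the assertion. I would expect the write-up to spend most of its length justifying the disjointness of supports and the direct-sum splitting of homology, citing \cite{Acampo_sur_la} for the precise form of the reduction system and the cyclic $T_Q$-action on the copies of $F_g$.
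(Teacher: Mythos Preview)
Your overall architecture matches the paper's: build one copy of $\Gamma_g$ supported in the subsurface $F_{P,Q}\subset F_Q$ cut off by a reduction curve, then conjugate by powers of $T_Q$ to obtain $a_n$ commuting copies. Your treatment of commutation and of the direct-product structure (via disjoint supports and the homology splitting) is in fact more careful than what the paper writes out.

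However, there is a genuine gap at the step you flag as routine. You assert that ``the quadratic vanishing cycles of the divide $Q$ that lie entirely inside $\Sigma_1$ are precisely a distinguished system of quadratic vanishing cycles for $g$''. This is not correct, and supplying the missing curves is exactly where the paper spends its effort. Among the basic vanishing cycles of the divide $Q$, only those attached to maxima of $f_Q$ lying in the $P_+$-regions sit inside $F_{P,Q}$; the cycles $\delta_c$ attached to the individual double points of $Q$ in a Manhattan grid, and the cycles attached to maxima/minima in the interior of the grid, do \emph{not} lie in $F_{P,Q}$, and there are no basic $Q$-cycles playing the role of the saddle- and minimum-type vanishing cycles of $P$. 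The paper constructs, for each double point $c$ of $P$, a new curve $\delta_{Q,c}\subset F_{P,Q}$ running diagonally through the corresponding Manhattan block, and checks that it is isotopic to
\[
\Delta_{c_p}\circ \Delta_{M_p}\circ\cdots\circ \Delta_{c_2}\circ \Delta_{M_2}(\delta_{c_1}),
\]
i.e.\ the image of a basic $Q$-vanishing cycle under Dehn twists along other basic $Q$-vanishing cycles; hence $\delta_{Q,c}$ is itself a quadratic vanishing cycle of $f$ and the twist along it lies in $\Gamma_f$. An analogous (and more delicate, with separate arguments for $a_n$ even and odd) construction is carried out for each minimum of $f_P$. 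Only after these curves are produced does one have a system $B_{P,Q}\subset F_{P,Q}$ with $(F_{P,Q},B_{P,Q})\cong (F_P,B_P)$, so that the Dehn twists along $B_{P,Q}$ generate a subgroup of $\Gamma_f$ isomorphic to $\Gamma_g$. Without this construction you have not exhibited generators of your $G_1$ inside $\Gamma_f$.

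A smaller point: cutting $F_Q$ along the $a_n$ reduction curves does not yield only the $a_n$ copies of $F_g$; there is an additional complementary piece (the part coming from the torus-knot fiber of the last cabling). This does not affect the argument once the supports of the $G_j$ are correctly placed in the $\Sigma_j$, but your description of the decomposition should be adjusted.
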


\begin{theorem} \label{thm:isom_geom_group} Let $f_{(a,b)}$ be an irreducible 
	singularity with
	$n \geq 2$ essential Pui\-seux pairs. The group $\Gamma_{f,top}^0$ is 
	isomorphic
	to the product of $a_na_{n-1}\cdot\, \dots \,\cdot a_2$ copies of the
	geometric monodromy group of the
	singularity $y^{a_1}-x^{b_1}=0$.
\end{theorem}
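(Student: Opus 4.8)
The plan is to proceed by induction on the number $n$ of essential Puiseux pairs, using the iterated cabling structure of the divide $Q = P_{a_n,b_n'} * S_{n-1}$ together with Theorem~\ref{thm:reduction_system} on the reduction system, Theorem~\ref{thm:geometric_mon_prod} on the product structure, and the explicit visualization of vanishing cycles and the piece $F_{P,Q}$ from Sect.~\ref{ss:visual_vanishing} and Sect.~\ref{ss:reduction_system}. First I would set up the geometric picture: for the divide $Q = P_{a_n,b_n'}*S_{n-1}$, Theorem~\ref{thm:reduction_system} produces a reduction system of $a_n$ disjoint simple closed curves on $F_Q$, each cutting off a copy of the fiber $F_{S_{n-1}}$ of the divide $S_{n-1}$ of the singularity $g = f_{(a',b')}$, and the monodromy $T_Q$ cyclically permutes these $a_n$ copies. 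Passing to the subgroup $\Gamma_{f,\mathrm{top}}$ — elements acting as the identity on every non-top component of $F \setminus R$ — isolates precisely the product of the $a_n$ actions on these top copies; combined with the restriction to $\Gamma_{f,\mathrm{red}}^0$ (Dehn twists around vanishing cycles disjoint from $R$), this identifies $\Gamma_{f,\mathrm{top}}^0$ with a subgroup of $\prod_{a_n}\Gamma_{g,\mathrm{top}'}^0$ for the corresponding top-subgroup of $g$.

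The second step is to show this identification is an isomorphism onto the full product, i.e. that every vanishing cycle of $Q$ lying inside one top copy of $F_{S_{n-1}}$ is, under the natural identification, a vanishing cycle of the divide $S_{n-1}$, and that the $a_n$ copies are genuinely independent (their supports are disjoint, being separated by the reduction curves). The key computation is tracking which distinguished vanishing cycles of $Q$ are supported in $F_{P,Q}$: these are exactly the cycles $\delta_M$ of the $P_+$-regions that survive the cabling together with the "bridge through Manhattan" cycles $\delta_a,\delta_b,\delta_m$ described in Sect.~\ref{ss:reduction_system}, and by the slalom/bridge construction their union inside $F_{P,Q}$ reconstructs (up to admissible isotopy) the distinguished system for $S_{n-1}$. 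Iterating the induction hypothesis down to $n=1$ collapses the nested products: at each stage $k$ descending from $n$ to $2$ one gains a factor of $a_k$, so after $n-1$ steps $\Gamma_{f,\mathrm{top}}^0$ is the product of $a_n a_{n-1}\cdots a_2$ copies of $\Gamma_{g_1,\mathrm{top}}^0$ where $g_1$ has the single Puiseux pair $(a_1,b_1)$; but for a singularity with one Puiseux pair the divide is $P_{a_1,b_1}$, which has no further cabling structure, so $R = \emptyset$, every component is a top component, and $\Gamma_{g_1,\mathrm{top}}^0$ is the whole geometric monodromy group of $y^{a_1}-x^{b_1}=0$.

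The main obstacle I expect is the second step: proving that the inclusion $\Gamma_{f,\mathrm{top}}^0 \hookrightarrow \prod \Gamma_{g,\mathrm{top}}^0$ is surjective, equivalently that no relations are forced between the $a_n$ cyclically-permuted factors beyond those already present in a single factor, and that the Dehn twists around cycles disjoint from $R$ really do generate the full top-subgroup in each copy rather than a proper subgroup. This requires a careful check that the reduction curves $R, T_Q(R), \ldots, T_Q^{a_n-1}(R)$ are pairwise disjoint and non-isotopic (so the top copies are genuinely separated subsurfaces), and that under the identification $F_{P,Q} \cong F_{S_{n-1}}$ the quadratic vanishing cycles of $Q$ supported there are carried bijectively to a \emph{distinguished} basis of vanishing cycles of $S_{n-1}$ — this last point is where the combinatorics of the "Manhattan grid" and the bridge strips $S_c$ must be matched against the slalom reconstruction of $S_{n-1}$ from $S_{n-1}^{\pm}$, following \cite{Acampo_sur_la,Acampo_slalom}. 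A subtlety worth flagging is that the statement concerns $\Gamma_{f,\mathrm{top}}^0$ rather than $\Gamma_{f,\mathrm{top}}$ or $\Gamma_{f,\mathrm{red}}$, and the excerpt explicitly notes that the inclusion $\Gamma_{f,\mathrm{red}}^0 \subset \Gamma_{f,\mathrm{red}}$ is not known to be an equality; so the proof must stay strictly within the $0$-decorated (Dehn-twist-generated) subgroups throughout, and the isomorphism is asserted only at that level.
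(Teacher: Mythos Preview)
Your approach is essentially the same as the paper's: both argue by iterating the construction from the proof of Theorem~\ref{thm:geometric_mon_prod}, descending one Puiseux pair at a time, using the monodromy $T_Q$ to produce the $a_n$ disjoint copies at each stage, and terminating at the base case of a single Puiseux pair. The paper's proof is in fact considerably terser than your proposal: it simply observes that the proof of Theorem~\ref{thm:geometric_mon_prod} already builds a copy of $\Gamma_g$ supported in $F_{P,Q}\subset F_Q$, that conjugation by $T_Q$ yields $a_n$ commuting copies with disjoint supports, and that repeating this argument gives $a_n a_{n-1}\cdots a_2$ commuting copies of the geometric monodromy group of $y^{a_1}-x^{b_1}$.

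Your concerns about surjectivity (that the resulting product is \emph{all} of $\Gamma_{f,\mathrm{top}}^0$, not merely a subgroup) and about the vanishing cycles in $F_{P,Q}$ forming a genuine distinguished basis for $S_{n-1}$ are legitimate, and you are right that these are the points requiring care. The paper does not spell them out in the proof of Theorem~\ref{thm:isom_geom_group} itself; the matching of vanishing cycles is what the long proof of Theorem~\ref{thm:geometric_mon_prod} accomplishes (the explicit construction of $\delta_M$, $\delta_{Q,c}$, and the minimum cycles $r$, together with the verification that they intersect on $F_{P,Q}$ exactly as the corresponding cycles do on $F_P$), and the paper tacitly treats the reverse inclusion $\Gamma_{f,\mathrm{top}}^0\subset\prod\Gamma_{y^{a_1}-x^{b_1}}$ as clear from the definitions and the support picture. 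So you have correctly identified where the work lies, and your outline is faithful to how the paper proceeds.
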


\begin{proof}[Proof of Theorem~\ref{thm:geometric_mon_prod}] Let $P$ be the 
	divide 
	$P_{a_{n-1},b'_{n-1}}* \dots *P_{a_2,b'_2}*P_{a_1,b_1}$ for the singularity
	of $g$ and let $Q=P_{a_n,b'_n}*P$ be the divide for the singularity of $f$.
	A copy $F_{P,Q}$ of the fiber $F_P$ is constructed as a subset of the fiber
	$F_Q$. Remember, that $F_P$ is obtained by connecting with strips
	the sets $\{(x,u)\in TD | f_P(x)>0, (df_P)_x(u)=0\}$, where $f_P:D \to 
	\mathbb{R}$
	is a Morse function for the divide $P$. For each double point of $P$
	there are two connecting strips. To each $+$-component of $P$ corresponds a
	$P_+$-component of $Q$ with the same topology  
	and to each double point of $P$ corresponds a Manhattan
	grid of $Q$, in which we have drawn diagonally the projection of the strips
	that connect $\{(x,u)\in TD | x \in Q_{P,+}, (df_Q)_x(u)=0\}$. Here,
	$Q_{P,+}$ denotes the union of the $P_+$-components  of the 
	complement of the divide $Q$. From the divide $P$ is deduced a distinguished
	basis of quadratic 
	vanishing cycles for the singularity of $f$. Let 
	$B_P$ be the union of the curves of this basis.  This basis can be drawn
	on the fiber $F_P$, see Sect.~\ref{ss:visual_vanishing} for an algorithmic 
	method 
	to 
	visualize distinguished basis of vanishing cycles. 
	
	In order to prove the theorem, we will 
	construct inside $F_{P,Q}$ a system
	of simply closed curves with union $B_{P,Q}$, each of them being a 
	quadratic vanishing cycle for
	the singularity $g$, such that the pairs $(F_P,B_P)$ and 
	$(F_{P,Q},B_{P,Q})$ are diffeomorphic. This finishes the proof,
	since the Dehn twist, whose cores are the quadratic vanishing cycles
	of $B_{P,Q}$, generate a copy of $\Gamma_g$ in $\Gamma_f$. By acting with 
	the 
	geometric monodromy $T$ of the singularity $f$, one obtains $a_n$ commuting
	copies of $\Gamma_g$ in $\Gamma_f$.
	
	To each $+$-region of $P$ corresponds one $P_+$-region of $Q$. The maximum
	of $f_P$, say at $M$ in the region, is also a maximum of $f_Q$. The 
	quadratic
	vanishing cycle $\delta_M:=\{(M,u)\in TD | \|M\|^2+\|u\|^2=1\}$ of $F_Q$ 
	lies in $F_P$
	and also in $F_{P,Q}$.  For each double point $c$ of $P$ the quadratic
	vanishing cycle $\delta_c \subset F_P$ projects in $D$ to a tear splicing
	the
	gradient line $g_c$ of $f_P$ through $c$. The endpoints of $g_c$ 
	are maxima of $f_P$ or points on $\partial{D}$. The function $f_Q$ 
	has exactly one gradient line $g_{Q,c}$ that has the same endpoints
	as $g_c$ and coincides with $g_c$ 
	in a neighborhood of the common endpoints. The gradient line $g_{Q,c}$
	runs along a diagonal through the Manhattan grid corresponding to $c$. Let
	$g_{Q,c}$ be the simply closed curve on $F_Q$, that projects to a tear
	$t_{Q,c}$ equal to $g_{Q,c}$, except  above a neighborhood of its endpoints
	where $t_{Q,c}$ equals $t_c$. 
	We remark that $\delta_{Q,c}$ is a cycle in $F_{P,Q}$. Let
	$c_1, \dots ,c_p$ be the $p:=a_n$ double points of $Q$ that occur along the 
	$g_{Q,c}$ and let $M_2, \dots , M_p$ along $g_{Q,c}$ be 
	the maxima.
	Let $\delta_{Q,c_1}$ be the quadratic vanishing cycle of the singularity $f$
	that corresponds to $c_1$. One verifies that the cycles
	$\delta_{Q,c}$ and 
	$$
	\Delta_{c_p}\circ \Delta_{M_p} \circ \dots \circ \Delta_{c_2}\circ 
	\Delta_{M_2}(\delta_{Q,c_1})
	$$
	are isotopic. Here $\Delta_{c_i}$ or $\Delta_{M_i}$ stands for 
	the right Dehn twist of $F_Q$ whose
	core curve is the quadratic vanishing cycle 
	$\delta_{c_i}$ or $\delta_{M_i}$
	of the singularity $f$. 
	Hence $\delta_{Q,c} \subset F_{P,Q}$ is a quadratic
	vanishing cycle for the singularity $f$. So far, we have constructed for
	each maximum and for each saddle point of $f_P$ a simply closed curve on
	$F_{P,Q}$ that is a quadratic vanishing cycle of the singularity $f$. These
	cycles intersect on $F_{P,Q}$ as do the corresponding quadratic vanishing 
	cycles of the singularity $g$ on $F_P$. 
	
	\begin{figure}
		\centering \includegraphics*{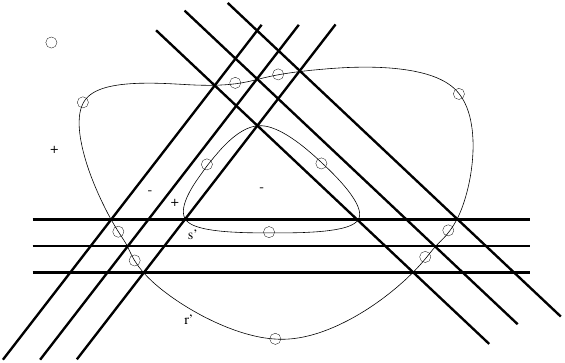}
		\caption{Vanishing cycle $s$ on $F_Q$ from a minimum of 
			$f_P$ and cycle $r$ on $F_{P,Q}$.}
		\label{fig:min_neg}
	\end{figure}
	
	We now wish to construct for each minimum of $f_P$ a vanishing 
	cycle on $F_{P,Q}$. We have to handle two cases: $p$ odd, see 
	Fig.~\ref{fig:min_neg}, and
	$p$ even, see Fig.~\ref{fig:min_pos}.
	
	If $p$ is odd, a minimum $m$ of $f_P$ will 
	also be a minimum of $f_Q$. Let  $\delta_{Q,m}$ 
	be the vanishing cycle on $F_Q$ corresponding to $m$, see 
	Fig.~\ref{fig:min_neg}. 
	The 
	projection of $\delta_{Q,m}$ into $D$ is a smooth simply closed 
	curve $s'$ transversal to $Q$, that 
	surrounds the $-$ region of $m$ through
	the its neighboring $+$ regions of $Q$. One needs to take care 
	that in each neighboring
	$+$ component the projection runs through the maximum of $f_Q$ in that 
	region.
	The points of $s$ correspond to pairs $(x,u)$ with $x\in s'$ and $u$ 
	pointing
	inwards to $m$. Let $r$ be a simply closed cycle on $F_{P,Q}$ 
	that projects into
	$D$ upon the curve $r'$, which now surrounds the $-$ region of $m$ 
	through
	the $P_+$-components of $Q$, see Fig.~\ref{fig:min_neg}. In the 
	Manhattan grids $r'$ ist just
	a diagonal, again $r'$ runs through the maxima of the regions or 
	touches $\partial{D}$. On $r$ we 
	only allow pairs $(x,u)$ where $u$ points inwards to $m$. It is clear that 
	the cycle $r$ on $F_{P,Q}$ intersects the cycles of the previous
	construction as the vanishing cycle to the minimum of $f_P$
	intersects the vanishing cycles of the critical points of $f_P$.
	It remains however to check that the cycle $r$ is a quadratic
	vanishing cycle of the singularity of $g$. By 
	applying to $r \subset F_Q$ 
	the Dehn twist corresponding to the
	critical points of $f_Q$ that are in between the curves $r'$ and $s'$,
	one can transform the isotopy class of the curve $\delta_{Q,m}$ 
	to the class of the curve
	$r$. This proves that $r$ is indeed a quadratic vanishing cycle of the
	singularity of $g$.
	
	\begin{figure}
		\centering \includegraphics*{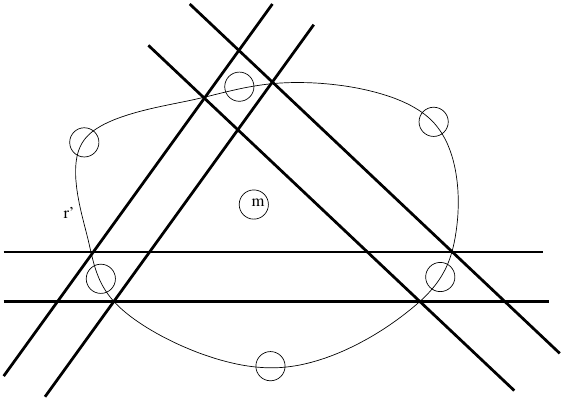}
		\caption{Vanishing cycle $\delta_{Q,m}$ on $F_Q$ from a minimum of
			$f_P$ and cycle $r$ on $F_{P,Q}$.}
		\label{fig:min_pos}
	\end{figure} 
	
	If $p$ is even, a minimum $m$ of $f_P$ will
	be a maximum of $f_Q$. Let  $\delta_{Q,M}$
	be the vanishing cycle on $F_Q$ corresponding to maximum $M:=m$, 
	see Fig.~\ref{fig:min_pos}. Its 
	projection into $D$ is the point $M:=m$. Let $r$ be a simply 
	closed cycle on $F_{P,Q}$
	that projects into
	$D$ upon the curve $r'$ which now surrounds the $-$ region of $M:=m$
	through
	the $P_+$-regions of $Q$, see Fig.~\ref{fig:min_pos}. In the Manhattan 
	grids 
	$r'$ 
	ist just
	a diagonal, again $r'$ runs through the maxima of the regions. On $r$ we
	only allow pairs $(x,u)$ where $u$ points inwards to $m$. It is clear that
	the cycle $r$ on $F_{P,Q}$ intersects the cycles of the previous
	construction as the vanishing cycle to the minimum of $f_P$
	intersects the vanishing cycles of the critical points of $f_P$.
	By
	applying to $\delta_{Q,M} \subset F_Q$
	the Dehn twist corresponding to the
	critical points of $f_Q$ that are in between the curve $r'$ and 
	the point $M:=m$,
	one can transform the isotopy class of the curve $\delta_{Q,M}$ to the 
	class of 
	the curve
	$r$, and proves that $r$ is indeed a quadratic vanishing cycle of the
	singularity of $g$. As explained, this terminates the proof.
\end{proof}

\begin{proof}[Proof of Theorem~\ref{thm:isom_geom_group}]
	The proof of Theorem~\ref{thm:geometric_mon_prod} constructs a copy 
	$\Gamma_{P,Q}$ of the
	mono\-dromy group $\Gamma_f$ of the singulatity $f$ as subgroup 
	in the monodromy group $\Gamma_g$ of the
	singularity $g$. This copy acts with support in a copy $F_{P,Q}$ 
	of the fiber $F_P$. The the first $a_n-1$ iterates of the 
	monodromy $T_Q$ of the singularity $g$  constructs $a_n$ copies of $F_P$
	in $F_Q$. By conjugation with $T_Q$ one gets $a_n$ copies from 
	$\Gamma_{P,Q}$.
	We end the proof by repeating this argument. One 
	gets $a_na_{n-1} \cdots a_2$ commuting copies 
	of the geometric monodromy group of the 
	singularity $x^{b_1}-y^{a_1}$ in $\Gamma_g$.
\end{proof}

\section{Some other questions and properties}
\label{s:other_questions}

We conclude by stating some other open questions and properties related to 
divides associated with plane curve singularities.

\subsection{Connected sum and characterization of plane curve divides}
The connected sum of two divides $(D_1,P_1)$ and $(D_2,P_2)$ is done by 
making a boundary connected sum of $D_1$ and $D_2$ such that a boundary 
point of $P_1$ matches with a boundary point of $P_2.$ For 
divides 
with one branch we have the
formula:

$$
L(P_1\#P_2)=L(P_1)\#L(P_2)
$$

A  relative immersion $i:I \to D$ of a copy of $[0,1]$ in $D,$ such 
that at selftangencies the
velocities are with opposite orientation, defines an embedded and 
oriented arc $I'$ in $S^3$ by putting:

$$
I':=\{(x,u) \in S^3 | x \in i(I), (di^{-1})_x(u) \geq 0\}
$$

Let $j:I \to D$ be a relative immersion with only transversal crossings 
and opposite selftangencies, such that
the endpoints of $i$ and $j$ are tangent with opposite orientations and that 
all tangencies of $i$ and $j$ are generic and have opposite orientations. 
The union $I' \cup J'$
is the oriented knot of the pair $(i,j).$ A divide $P$ 
defines pairs $(i_P,j_P)$ of relative immersions 
with opposite orientations by taking both orientations. Those pairs 
$(i_P,j_P)$ have a special $2$-fold symmetry. For instance 
the complex conjugation 
realizes this $2$-fold 
symmetry for a  divide, which arises as a real deformation of a real
plane curve singularity. It is in\-teresting to observe that this symmetry 
acts on $F_1$ with as
fixed point set the intersection $D\cap F_1$, which is a collection of $r$
disjointly embedded arcs in $F_1.$ The quotient of $\bar{F_1}$ by the symmetry
is an orbifold surface with exactly $2r$ boundary 
${\pi \over 2}$-singularities. 
Any link of singularity of a plane curve can be obtained as the link of a 
divide by Sect.~\ref{s:divides_plane_curves} (c.f. \cite{Acampo_groupe_I, 
	Acampo_real_def,GZ_inter_two}). It is an 
interesting problem to characterize links of 
singularities among links of divides.

\subsection{Symplectic properties}
The link of a divide is transversal to the standard contact structure in the
$3$-sphere. This can be seen explicitly by the following computation, where
we use the multiplication of quaternions. Let $P$ be a divide in the unit
disk. We assume that the part of $P$, which lies in the collar of
$\partial{D}$  
with inner radius $1 \over \sqrt{2}$,
consists of radial line segments. We think of 
the branches of $P$ as parametrized
curves $\gamma(t)=(a(t),b(t)),-A \leq t \leq A,$ 
where the parameter speed is adjusted such that
$a^2+b^2+\dot{a}^2+\dot{b}^2=1.$ To the branch $\gamma$ correspond two arcs
$\Gamma^+$ and $\Gamma^-$ on the sphere of quaternions of unit length:

$$
\Gamma^+(t):=a(t)-\dot{a}(t)i+b(t)j+\dot{b}(t)k
$$

$$
\Gamma^-(t):=a(-t)+\dot{a}(-t)i+b(t)j-\dot{b}(-t)k
$$

The left invariant speed of $\Gamma^+$ at time $t$ is

$$
v(t):=\Gamma^+(t)^{-1}{d \over dt}\Gamma^+(t)
$$

We have

$$
v=a\dot{a}+\dot{a}\ddot{a}+b\dot{b}+\dot{b}\ddot{b}+
[-a\ddot{a}+\dot{a}^2-b\ddot{b}+\dot{b}^2]i+v_jj+v_kk
$$
The coefficient 
$v_0:=a\dot{a}+\dot{a}\ddot{a}+b\dot{b}+\dot{b}\ddot{b}$ vanishes, 
since ${d\over dt}\Gamma(t)$ is perpendicular to $\Gamma(t),$ and hence we 
can
rewrite the coefficient $v_i$ of $i$ in $v$ as

$$
v_i=-a\ddot{a}+\dot{a}^2-b\ddot{b}+\dot{b}^2=<(a+\dot{a},b+\dot{b})\mid
(\dot{a},\dot{b})>
$$

Outside of the collar neighborhood of $\partial{D}$ 
we have $v_i > 0,$ since $a^2+b^2 < 1/2 <
\dot{a}^2+\dot{b}^2.$ In the collar we also have $v_i > 0$ by a direct
computation. Since the left
invariant contact structure on the unit sphere in the skew field 
of the quaternions 
is given by the span of the tangent vectors 
$j$ and $k$ at the point $1$, we conclude that $\Gamma^{+}$ with its
orientation is in the positive sense transversal to the left invariant 
contact structure $S^3$. 

For the link of a divide we now will construct 
a polynomial, hence symplectic, spanning surface in the
$4$-ball. For $\lambda \in \mathbb{R}, \lambda > 0$ put 

$$
B_{\lambda}:=\{p+ui \in \mathbb{C}^2 | p,u \in \mathbb{R}^2,\|p\|^2 + 
\lambda^{-2} \|u\| 
\leq 1\}
$$

We have $B_{\lambda} \cap \mathbb{R}^2 = D$ and $B_{\lambda}$ is a strictly
holomorphically convex domain with smooth boundary in $\mathbb{C}^2$. The map 
$(p,u) \mapsto ((p,u/{\lambda})$ identifies $\partial{B_{\lambda}}$ with
the unit $3$-sphere of $\mathbb{C}^2$.

\begin{theorem}
	Let $P$ be a connected divide in the disc $D$ with $\delta$ double points 
	and
	$r$ branches.
	There exist $\lambda > 0, \eta  > 0$
	and
	there exists a polynomial function $F:B_{\lambda} \to \mathbb{C}$ with the 
	following
	properties:
	
	\begin{enumerate}[label=(\alph*)]
		\item the function $F$ is real, i.e. 
		$F(\overline{p+ui})=\overline{F(p+ui)},$
		
		\item the set $P_0:=\{p \in D | F(p)=0 \}$ is a 
		divide, which is $C^1$ close to
		the divide P, and hence the divides $P$ and $P_0$ are combinatorially
		equivalent,
		
		\item the function $F$ has only non degenerate 
		singularities, which are all real,
		
		\item for all 
		$t \in \mathbb{C},\ |t| \leq \eta$ the intersection 
		$K_t:=\{(p+iu) \in B_{\lambda} |  F(p+iu)=t \} \cap 
		\partial{B_{\lambda}}$ 
		is transversal and by a small isotopy
		equivalent to the link $L(P)$,
		
		\item for the link $K_{\eta}$ the surface 
		$\{(p+iu) \in B_{\lambda} | F(p+iu)=\eta\}$ 
		is  a connected  smooth
		symplectic spanning surface of genus $\delta-r+1$ in the $4$-ball.
	\end{enumerate}
	
\end{theorem}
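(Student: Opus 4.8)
The plan is to realise $F$ as a polynomial avatar of the complex function $\theta_{P,\eta}$ of eq.~\ref{eq:theta_func}. I would first use Lemma~\ref{lem:existence_adpated_function} to fix an adapted Morse function $f_P\colon D\to\mathbb R$ and then replace it by a real \emph{polynomial} $g$ which is $C^{2}$-close to $f_P$: all conditions of Definition~\ref{def:adapted_function} are $C^{2}$-open except the normalisation of the critical values, and this last requirement is a finite system (one condition per critical point) which can be enforced while staying $C^{2}$-close by adding bump-like polynomial corrections supported near the finitely many critical points. Thus $g$ is again adapted, to a divide $P_{0}:=g^{-1}(0)\cap D$ which is $C^{1}$-close to $P$, in particular combinatorially equivalent to it; this is (b). Mimicking eq.~\ref{eq:theta_func}, I would then set
\[
F(x,u):=g(x)+i\varepsilon\,dg(x)(u)-\tfrac12\varepsilon^{2}Q(x)(u,u),
\]
with $\varepsilon>0$ small and $Q$ a polynomial-valued symmetric bilinear form with $Q(c)=H_{g}(c)$ at each critical point $c$ of $g$ (Lagrange interpolation). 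Then $F$ is a real polynomial with $F|_{D}=g$, so (a) and the first half of (b) hold; a direct rank computation shows that the equations $\partial F/\partial u=0$ already force $u=0$ at a critical point, so the critical set of $F$ is exactly $\{(c,0)\}$, each point having the nondegenerate $2$-jet obtained from the quadratic parts of $g$; and since $F$ is real, its critical set is invariant under conjugation, so each such point, being conjugation-fixed, is real. This gives (c).

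\textbf{Links of the fibres and the page.} For (d) I would rerun the proof of Theorem~\ref{thm:fibered_link} with $F$ in place of $\theta_{P,\eta}$. Since $F$ agrees with the smooth model to first order in the fibre directions and differs from it only by controlled terms, the same Cauchy--Riemann/Hessian observation as there shows that on a solid-torus neighbourhood of $L(P_{0})=L(P)$ the map $F$ (and its argument) induces the standard open-book model near a binding; submersivity of $F$ off its critical points, transversality of the fibres to $\partial B_{\lambda}$ and the fibration property are $C^{1}$-open, hence survive the passage from the smooth to the polynomial model, once $\varepsilon$ and the ellipsoid parameter $\lambda$ (which fixes the scale of the fibre directions) are chosen small enough. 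So each $K_{t}$, $|t|\le\eta$, is a curve parallel to the binding inside that solid torus, hence isotopic to $L(P)$. With $g$ adapted the critical values of $F$ are exactly $0$ (saddles) and $\pm1$ (maxima and minima), so any small $\eta\in(0,1)$ is a regular value and $\Sigma:=\{F=\eta\}\cap B_{\lambda}$ lies in the Milnor tube; by Theorem~\ref{thm:fibered_link} and Milnor's equivalence of the two fibrations it is diffeomorphic to the page of the open book of $L(P)$, which for a connected divide is connected, has $r$ boundary components and first Betti number $2\delta-r+1$ (one vanishing cycle per bounded region and per double point; cf.\ eq.~\ref{eq:bpd} and eq.~\ref{eq:bmd}), hence genus $\delta-r+1$.

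\textbf{Symplecticity.} It remains to see that $\Sigma$ is symplectic for $\omega_{\mathrm{std}}$, even though $F$ is not holomorphic for the standard structure $J_{0}$ on $\mathbb C^{2}$. Writing $F_{0}(x,u)=g(x)+i\varepsilon\,dg(x)(u)$ for the model away from the critical points and computing its holomorphic and antiholomorphic parts (with respect to $J_{0}$) gives the key identity
\[
\|\partial F_{0}\|^{2}-\|\bar\partial F_{0}\|^{2}=\varepsilon\,\|\nabla g(x)\|^{2}\ \ge\ 0,
\]
with equality exactly at the critical points of $g$. Hence at every regular point $|\partial F_{0}|>|\bar\partial F_{0}|$, so $\ker dF_{0}$ is an $\omega_{\mathrm{std}}$-positive $2$-plane; the $O(\varepsilon^{2})$ correction coming from $Q$ does not reverse this once $\varepsilon$ is small relative to the (fixed) data $g,Q$. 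Near each critical point the $2$-jet of $F$ is a nondegenerate complex-quadratic form, holomorphic for the \emph{anisotropic} complex structure $J_{\varepsilon}$ obtained from $J_{0}$ by dilating the fibre directions by $\varepsilon$, and a one-line computation shows every $J_{\varepsilon}$-complex plane is $\omega_{\mathrm{std}}$-positive, while the remaining higher-order terms perturb tangent planes of $\Sigma$ by an arbitrarily small amount. Since $\Sigma$ avoids the critical points, all of its tangent planes are $\omega_{\mathrm{std}}$-positive, so $\Sigma$ is a smooth connected symplectic surface in $B_{\lambda}$ of genus $\delta-r+1$ with boundary $K_{\eta}$ isotopic to $L(P)$, which is (e).

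\textbf{Where the difficulty lies.} The real work is to carry out the $\theta_{P,\eta}$-construction \emph{polynomially} while keeping (c), (d) and (e) simultaneously: in the smooth setting these are reconciled by the bump function $\chi$, identically $1$ near the critical points and $0$ away from them, whereas a polynomial cannot be constant on an open set. One therefore replaces $\chi\,H_{f_P}$ by the single polynomial $Q$ pinned to $H_{g}$ at the critical points (as above) --- alternatively, one first makes $g$ exactly quadratic near each of its critical points and glues the explicit local polynomial models --- and must then check that one and the same $F$, one scale $\lambda$ of the fibre directions and one small $\varepsilon$ satisfy all the competing smallness requirements at once (the $Q$-correction negligible against $\varepsilon\|\nabla g\|^{2}$ on the region carrying the open book, $\eta$ below the critical values, the fibre transverse to $\partial B_{\lambda}$, and so on). This is precisely the polynomial refinement of Theorem~\ref{thm:fibered_link}; it runs parallel to the way the constructions of \ref{ss:divides_embedded} and \ref{ss:divide_chebyshev} realise a \emph{plane-curve} divide by a genuine holomorphic polynomial morsification, the essential new point being that a general connected divide need not be algebraic, so $F$ cannot be taken holomorphic and the symplectic positivity has to be extracted from the $\partial/\bar\partial$ estimate rather than from complex analyticity.
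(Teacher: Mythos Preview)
Your approach is genuinely different from the paper's, and the difference stems from a misconception in your last paragraph. You write that ``a general connected divide need not be algebraic, so $F$ cannot be taken holomorphic.'' The paper's proof shows exactly the opposite: it first approximates the parametrizations $\gamma_l$ of the branches of $P$ by \emph{polynomial} maps $\gamma_{l,0}:[-1,1]\to\mathbb{R}^2$ via Weierstrass, so that the approximating divide $P_0$ is a union of real algebraic curves, and then takes $F:\mathbb{C}^2\to\mathbb{C}$ to be a real polynomial (i.e.\ a \emph{holomorphic} polynomial with real coefficients) whose zero set in $\mathbb{R}^2$ is $P_0$. Property~(e) is then immediate: the level set $\{F=\eta\}$ is a smooth complex algebraic curve in $\mathbb{C}^2$, hence automatically symplectic for $\omega_{\mathrm{std}}$. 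Property~(d) follows because for small $\lambda$ the ellipsoid $B_\lambda$ is a thin tube around $D\subset\mathbb{R}^2$, and $\{F=0\}\cap\partial B_\lambda$ recovers the tangent-direction model of $L(P_0)\simeq L(P)$.

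Your route via a polynomial avatar of $\theta_{P,\eta}$ is more intricate: it forces you to prove symplecticity by the $\partial/\bar\partial$ estimate (your computation $\|\partial F_0\|^2-\|\bar\partial F_0\|^2=\varepsilon\|\nabla g\|^2$ is correct), and then to patch this with a separate argument near the critical points where $\|\nabla g\|$ degenerates. That transition is exactly the place where in the smooth construction one uses the bump function $\chi$, and your replacement of $\chi H_{f_P}$ by a globally defined $Q$ means the ``near'' and ``far'' regimes overlap; you assert the $O(\varepsilon^2)$ correction is harmless but do not quantify the competition between $\varepsilon^2\|Q\|$ and $\varepsilon\|\nabla g\|^2$ on the intermediate region. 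This is fixable, but it is precisely the complication that the paper sidesteps by working holomorphically. What your approach would buy, if carried through, is a closer link to the open-book fibration of Theorem~\ref{thm:fibered_link}; the paper explicitly notes in the Remark following the theorem that its holomorphic $F$ does \emph{not} yield a Picard--Lefschetz fibration compatible with the contact structure, which is the price of its simplicity.
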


\begin{proof}
	Let the divide $P$ be given by smooth
	parametrized curves
	$\gamma_l:[-1,1] \to \mathbb{R}^2, 1\leq l \leq r$. Using the 
	Weierstrass Approximation Theorem, we can
	construct polynomial approximations $\gamma_{l,0}:[-1,1] \to \mathbb{R}^2$ 
	being 
	$C^2$ close to $\gamma_l$ and henceforth give a divide $P_0$ 
	with the combinatorics of the divide $P.$ We may choose $\gamma_{l,0}$ such 
	that
	$\gamma_{l,0}(s) \notin D, |s| > 1.$ 
	Let 
	$F:\mathbb{C}^2 \to \mathbb{C}$ be a real polynomial map such $F=0$ is a 
	regular
	equation for the union of the images of $\gamma_{l,0}.$
	Let $S_{\lambda}^3$ be the sphere 
	$S_{\lambda}^3:=\{p+iu \in \mathbb{C}^2 | \|p\|^2 + 
	\lambda^{-2}\|u\|^2=1\}.$ For a
	sufficiently small $\lambda > 0 $ we have that
	the $0$-level of $F$ on $S_{\lambda}^3$ is a model for the link
	$L(P).$ For $t \in \mathbb{C}, t \not=0,$ and $t$ sufficiently small, say 
	$|t| \leq
	\eta$, the surface
	$X_t:=\{p+iu \in B_{\lambda}^4 | F(p+iu)=t \}$ 
	is connected and smooth of genus $\delta-r+1$, and 
	has a polynomial equation, hence is a symplectic surface in the $4$-ball
	$B_{\lambda}$ 
	equipped with the standard symplectic structure of $\mathbb{C}^2.$
	The intersection $K_{\eta}:=X_{\eta} \cap \partial{B_{\lambda}}$ is also a 
	model for the link $L(P)$  and has hence a symplectic filling with the
	required properties.
\end{proof}

\begin{remark} Unfortunately, it is not the case that the 
	restriction of 
	$F$ to $B_{\lambda}$ is a 
	fibration with only
	quadratic singularities, such that for some $\eta > 1$ the fibers
	$f_0^{-1}(t), t \in \mathbb{C}, |t| < \eta,$ are transversal to the 
	boundary of
	$B_{\lambda}.$ So, we do not know, as it is the case for divides coming from
	plane curve singularities, if it is possible to fill in with a
	Picard-Lefschetz fibration, which is compatible with the contact and
	symplectic structure.
\end{remark}

\subsection{Presentation of the geometric monodromy group} We like to 
state the problem of presenting
the geometric
monodromy group of plane curve  singularities with generators
and relations. It 
would be particulary nice to express
the presentation in terms of a divide of the singularity. The same
problem can also be stated for the homological monodromy group of
plane curve singularities, but we think that the problem for the
geometric monodromy group is more tractable, since all reduction curves
can be taken into account 
Theorems~\ref{thm:geometric_mon_prod} and \ref{thm:isom_geom_group}. 
However, an important missing 
piece in this program is a presentation
with generators and relations of the geometric monodromy group of the
singularities $y^p-x^q=0$ for $3\leq p \leq q,\, 7 \leq p+q$. The fundamental
group of the complement of the discriminant in the unfolding of
the singularity $y^2-x^q=0$ is the braid group $B_{q-1}$. Bernard Perron and 
Jean-Pierre Vannier \cite{Perron_Vann} have proved for the singularities 
$y^2-x^q=0$ that
the geometric monodromy group is a faithful image of the braid group $B_{q-1}$
and that a similar result holds for the $D$ singularities $x(y^2-x^q)=0$. 
The fundamental
group of the complement of the discriminant in the unfolding of
the singularity $y^3-x^6=0$ is the Artin $AE_6$ group 
of the Dynkin diagramm $E_6$.
Bronislaw Wajnryb \cite{Waj} has proved that the geometric 
monodromy representation of $E_6$ into the mapping class group 
of the Milnor fiber of the singularity $y^3-x^6=0$ is not faithful. Recently, 
Nick Salter and the second author \cite[Corollary C]{Sal} have proven that the 
geometric monodromy representation of a plane curve singularity that is not of 
type $A_n$ or $D_n$ is never faithful if the genus of the Milnor fiber is at 
least $7$.

\bibliographystyle{alpha}
\bibliography{bibliography}

\printindex
\end{document}